\documentclass[10pt]{article}
\usepackage[utf8]{inputenc}
\usepackage{natbib}
\usepackage[hidelinks]{hyperref}
\usepackage[a4paper, total={7in, 10in}]{geometry}
\usepackage{graphicx}
\usepackage{array}
\usepackage{paralist}
\usepackage{amsmath}
\usepackage{amssymb}
\usepackage{amsthm}
\usepackage{xcolor}
\usepackage{mathtools}
\usepackage{float}
\newtheorem{theorem}{Theorem}[section]
\newtheorem*{acknowledgement*}{Acknowledgement}
\newtheorem{assumption}{Assumption}
\newtheorem{corollary}[theorem]{Corollary}
\newtheorem{definition}[theorem]{Definition}
\newtheorem{example}[theorem]{Example}
\newtheorem{lemma}[theorem]{Lemma}
\newtheorem{remark}[theorem]{Remark}
\def\mP{\mathbb P}
\DeclareMathAlphabet{\mathmybb}{U}{bbold}{m}{n}
\newcommand{\1}{\mathmybb{1}}

\DeclareMathOperator{\E}{\mathbb E}
\title{Splitting infinity: a de Finetti game with state-dependent profit rates and singular control for diffusions}
\author{
    Piotr Chlebicki\footnote{Corresponding author. E-mail: \url{piotr.chlebicki@math.su.se}}\\
    Department of Mathematics, Stockholm University\\
    \\Kristoffer Lindensjö\\Department of Mathematics, Stockholm University\\
}
\begin{document}
\maketitle
\begin{abstract}
    We study a game of resource extraction of a common good under one-dimensional diffusive dynamics with player actions corresponding to singular stochastic control up to absorption at $0$, implying a trade-off between profitable resource extraction and sustainability. Unsurprisingly, immediate extraction of all available resources is an equilibrium. A main result is that we characterize and prove the existence of non-trivial equilibria that do not result in immediate absorption, but instead are attained with both players extracting resources according to a state-dependent rate of threshold type, corresponding to the presence of control only when the state process is in an interval $(b,\infty)$. The underlying assumption is, roughly, that the drift coefficient of the uncontrolled state process grows sufficiently fast in relation to the discount rate, implying that the value for the corresponding one-player problem is infinite. We also study a generalization of the game that allows a state-dependent profit rate integrated against the control processes. In this game we again characterize and prove the existence of non-trivial equilibria of threshold type. In particular, a main novelty is that we find equilibria where the state process is controlled with its own local time such that we have reflection points with associated initial jumps, as well as other points in the state space where the control processes increase in a singular manner (skew points).
\end{abstract}
{\small	
  \textit{Keywords}: stochastic game theory, singular stochastic control,  skew points\\
} 
{\noindent\small	
  \textit{2020 Mathematics Subject Classification:} Primary 91A15; Secondary 91A10; 93E20; 60J70
}
\section{Introduction}\label{sec:intro}
Consider a complete filtered probability space $(\Omega, {\cal F}, \mP, ({\cal F}_t)_{t \geq 0})$ that satisfies the usual conditions and supports a standard one-dimensional Wiener process $(W_t)_{t\geq0}$. Consider two players $i=1,2$ with the ability to control a one-dimensional diffusion process $X=(X_{t})_{t\geq0}$, corresponding to the level of available resources, which in the absence of control is the unique strong solution to the stochastic differential equation (SDE):
\begin{align}\label{SDE_uncontrolled}
    dX_t = \mu\left(X_t\right)dt + \sigma\left(X_t\right)dW_t, \enskip X_{0} = x \geq 0,  \enskip
    0 \leq t \leq \tau:=\inf\{t\geq0:X_{t}\leq0\}.
\end{align}
In particular, the state process is absorbed when reaching $0$. The dynamics of the controlled state process $X^{D^1, D^2}=\left(X_{t}^{D^1,D^2}\right)_{t\geq0}$ are determined by two non-decreasing right-continuous with left limits (RCLL) processes $D^i=\left(D_{t}^{i}\right)_{t\geq0}$, corresponding to the accumulated extraction level of each of the players, according to the SDE:
\begin{align}\label{SDE_controlled_introduction}
    dX^{D^1,D^2}_t = \mu\left(X^{D^1,D^2}_t\right)dt + 
    \sigma\left(X^{D^1,D^2}_t\right)dW_t - dD_{t}^{1} - dD_{t}^{2}, \enskip 
    0 \leq t \leq \tau^{D^1,D^2}:=\inf\left\{t\geq0:X^{D^1,D^2}_{t}\leq0\right\}.
\end{align}
To ease the notation we will, throughout the paper, write $\tau$ instead of e.g., $\tau^{D^1,D^2}$. In our game the players do not select the processes $D^i$  directly, instead we allow them to select control strategies from a certain set of generalized Markov controls allowing classical control rates, jumps, and non-singular drift control in terms of local time (with respect to the controlled process) pushes with state-dependent intensities (skew points), where an aggregated, among the players, intensity of $1$ implies reflection; see Definition \ref{def:Markovian-admissible-controls} for details. 

In case at most one player selects a control strategy with jumps, the rewards of the players are defined in line with the usual singular stochastic control formulation of the de Finetti problem:
\begin{equation}\label{eq_value-func-no-jump}
    J_i\left(x,D^1,D^2\right)  := \E_x\left[\int_{[0,\tau]} e^{-rt}dD_{t}^{i}\right].
\end{equation}
We remark that the set of admissible control strategies will only allow jumps such that $\Delta D^{i}_{t}:=D^{i}_{t}-D^{i}_{t-}\leq X_{t-}^{D^1,D^2}$. However, in order to account for the possibility of both players creating jumps in the state process, in particular at the same time, in a way so that $\tau$ (absorption) occurs with $X^{D^1,D^2}_{\tau-} < \Delta D_{\tau}^1+\Delta D_{\tau}^2$, meaning that the players collectively try to extract more resources than are available, we modify \eqref{eq_value-func-no-jump} by defining the expected rewards as:
\begin{equation}\label{eq_value-func}
    J_i\left(x,D^1,D^2\right) := 
    \E_{x}\left[\int_{0}^\tau e^{-rt}d\left(D_{t}^{i}\right)^{c}+
    \sum_{0\leq t\leq\tau}e^{-rt}\frac{\Delta D_{t}^{i}}{\Delta D_{t}^{1} + \Delta D_{t}^{2}}
    \left(\left(\Delta D_{t}^{1} + \Delta D_{t}^{2}\right)\land X_{t-}^{D^1, D^2}\right)\right].
\end{equation} 
Note, $\left(D^i\right)^c$ denotes the continuous part of $D^i$, $r>0$ is a constant discount rate, $\E_x[\cdot]:= \E\left[\cdot\left| X^{D^1,D^2}_{0-}=x\right.\right]$, and the integrals are Lebesgue-Stieltjes. We also use the convention that the sum in \eqref{eq_value-func} should be interpreted to be over the jump times of $X^{D^1, D^2}$, i.e., when $\Delta D_{t}^{1} + \Delta D_{t}^{2}>0$. We refer to Section \ref{sec:Admissibility} for further details. 

Note that the second part of \eqref{eq_value-func} corresponds to payouts due to jumps in the control processes, and also that \eqref{eq_value-func} collapses into \eqref{eq_value-func-no-jump} in case there are jumps in at most one of the control processes $D^i$. Moreover, note that if the players collectively try, at some time $t-$, to extract more resources than are currently available (i.e., $\Delta D_{t}^{1}+\Delta D_{t}^{1} \geq X_{t-}^{D^1,D^2}$, so that absorption occurs immediately, i.e., $t=\tau$) then the total payout is capped by the amount of available resource (i.e., $X_{t-}^{D^1,D^2}$) and the individual payouts are proportional to the total attempted extraction amount. In particular, this means that if both players try to extract all available resources immediately, i.e., $\Delta D_{0}^{1}=\Delta D_{0}^{2}=X_{0-}=x$, then absorption occurs immediately and the players share the available resources equally, i.e., $J_i\left(x,D^1,D^2\right)=\frac{x}{2}$.

There is a vast literature on the problem of recourse extraction in a random environment, which traces its roots to the problem of forest rotation (the Faustmann problem); for modern studies of this problem see  e.g., \cite{alvarez2004stochastic,alvarez2007optimal,willassen1998stochastic}, and for a recent literature survey in view of the current problem formulation see \cite[Sec. 1]{ekstrom2023finetti}, where additionally the interpretation of the de Finetti problem as a problem of recourse extraction is further clarified. 

A main aspect of the literature on the de Finetti problem has been to identify conditions under which the optimal solution is of threshold type, in the sense that there is no control of the state process below some threshold $b \in [0,\infty)$ and maximal extraction above $b$. One common set of model assumptions ensuring optimal solutions of threshold type in the de Finetti problem with an underlying one-dimensional diffusion is that the drift and the diffusion coefficients satisfy $\mu'<r$ (i.e., the rate of growth of the drift coefficient is smaller than the discount rate) with $\mu(0)>0$ and $\sigma>0$  (or generalizations thereof), combined with some regularity conditions for $\mu$ and $\sigma$. Among the large variety of works studying this subject we mention only a small selection; see the seminal  \cite{shreve1984optimal} as well as e.g.,  \cite{asmussen1997controlled, de2017dividend, locas2024finetti, lokka2008optimal, paulsen2003optimal, paulsen2008optimal}. For surveys of the literature on the de Finetti problem interpreted as a problem of optimal dividends we refer to \cite{albrecher2009optimality,avanzi2009strategies}; for more recent literature surveys see e.g., \cite[Sec. 1.3]{bandini2022optimal} and \cite[Sec. 1]{lindensjo2020optimal}.  

In \cite{ekstrom2023finetti} a game version of the de Finetti problem was studied in a setting allowing for Markov controls corresponding to classical rate control for a one-dimensional diffusion. Under the assumption that the admissible control rates are bounded by a constant it was therein established that there is for this game a Nash equilibrium solution of threshold type under conditions similar to those mentioned in the previous paragraph. However, it is easily seen that if we consider more general control strategies allowing for jumps in the setting of \cite{ekstrom2023finetti}, then the equilibrium is trivial in the sense that the players will extract all resources immediately (due to pre-emption). 

The main aim of the present paper is to search for Nash equilibria, for the de Finetti problem, of threshold type in the sense that there is in equilibrium no control below a threshold $b$ while there is non-negative control above $b$ (although not necessarily maximal), in a setting that admits more general Markov control strategies, allowing in particular jumps. To attain this aim our main assumption is, contrary to the standard assumption mentioned above, that the rate of growth of the drift coefficient $\mu$ is larger than the discount rate $r$ for large values of the state process; see Section \ref{subsec:assum} for details. An interpretation of this assumption is that a threshold equilibrium can be attained in case the value of the corresponding one-player problem is infinite; indeed it can be shown that our assumptions yield an infinite value for the corresponding one-player problem (see Remark \ref{infinite-vaue-1-player-probem} for details).

More broadly, the aim of the present paper is to study a generalization of the de Finetti game presented  above by allowing a class of state-dependent profit rates corresponding to RCLL functions $g:[0,\infty) \rightarrow (0,\infty)$. In particular, the generalized game is based on the expected rewards:
\begin{equation}\label{eq:-circ-in-intro}
    \E_{x}\left[\int_{[0,\tau]} e^{-rt}g\left(X^{D^1,D^2}_{t-}\right)\circ d D_{t}^{i}\right],
\end{equation}
where our definition of the integral is stated in Section \ref{sec:g-function}. (We remark that the special case $g=1$ corresponds to the game presented above, with expected reward \eqref{eq_value-func}, and that it is considered separately from the general game formulation for ease of exposition.) 

An early work studying a singular stochastic control problem with a state-dependent profit rate  (also called reward rate) is \cite{zhu1992generalized}, which studies a (one-player) optimization problem of singular stochastic control for a diffusion and defines an integral for \eqref{eq:-circ-in-intro} that is used under the assumption that $g$ (using our notation) is continuos. This type of integral has recently been studied in several other works for the case that $g$ is continuos; see e.g., \cite{de2018stochastic, kwon2015game, liang2024equilibria} (in the context of singular stochastic control games) and \cite{ferrari2019class,jack2008singular,junca2024optimal, lon2011model} (in the context of one-player singular stochastic control problems). 

From a theoretical view-point the present paper contributes to the literature on nonzero-sum games of singular stochastic control, which is a class of problems that is far from being fully understood. Let us now mention some related works within this literature; for further related literature see e.g., \cite[Sec 1.4]{de2023nash} and \cite[Sec 1.1]{de2024model}. 

The work \cite{kwon2015game} studies a two-player game of singular stochastic control and stopping for a one-dimensional diffusion. The attention is limited to a class of barrier strategies that separate the state space into regions with (maximal) control and regions with no control, corresponding to controlling with a combination jumps and reflection of the state process.    
The work \cite{de2018stochastic} establishes a connection between a two-player nonzero-sum game of singular stochastic control and an associated game of stopping for a one-dimensional diffusion. Attention is restricted to searching for equilibria of double threshold type, which for the control game means reflection of the state processes at an upper and a lower threshold. 
The work \cite{de2023nash} studies an optimal dividend game allowing for singular control of Brownian motion with drift. A main feature is that each player controls their own state process. An important contribution is a definition of a general class of randomized feedback strategies for singular stochastic control. 
The work \cite{steg2024strategic} studies a singular stochastic control game of strategic irreversible investment where the (non-decreasing) control processes do not affect the state process; instead they determine the investment level of each player. The admissible controls correspond to a certain class of closed-loop strategies that allows for reflective behaviour. The work \cite{cao2023stationary} studies a stationary discounted and ergodic mean field game of singular control for a one-dimensional diffusion. 

The works that are closest to the present paper that we are aware of are \cite{de2018stochastic,ekstrom2023finetti,kwon2015game} (described above). The main differences of the present paper compared to \cite{ekstrom2023finetti}  are that the latter considers only the special case of a constant profit rate ($g=1$, in our notation), and that only Markov control strategies corresponding to bounded classical control rates are admissible. Major differences of the present paper compared to \cite{de2018stochastic} as well as \cite{kwon2015game} are that these works search for equilibria only in strategies of bang-bang type (see the above for details) and that the considered profit rates are continuos.    

Let us summarize the main findings and contributions of the present paper: 

We define a generalized set of Markov control strategies allowing classical control rates as well as jumps and skew points which are allowed to be simultaneous (Section \ref{sec:Admissibility}). To our knowledge this corresponds to a new type of Markov control strategies for singular stochastic control games; cf. e.g.,  \cite{de2018stochastic,kwon2015game} where threshold strategies corresponding to reflection and associated jumps are considered, \cite{aid2020nonzero,christensen2022moment,ferrari2019strategic} where impulse control strategies are considered, \cite{bodnariu2025time} where a novel class of control strategies is defined, allowing for jumps and reflective boundaries as well as exploding control rates that create inaccessible boundaries for the controlled process, and e.g., \cite{mannucci2004nonzero} where classical rate control strategies are considered; see also \cite[Sec. 2]{possamai2020zero} for a related discussion of different formulations of stochastic differential games. A related main finding of the present paper, that shows the utility of our generalized class of Markov control strategies, is that we find equilibria in which the control processes involve skew points and classical rate control (Section \ref{sec:g-function}) as well as the usual (in the context of singular stochastic control theory) bang-bang type control of reflecting at a given boundary and jumping to that boundary if the state process starts above it (Section \ref{sec:solution-with-jumps}). This marks, according to our knowledge, the first example of a control problems where skew points appear naturally in the solution. 

A main contribution for the game version of the usual de Finetti problem, i.e., with a constant profit rate (cf. the expected reward \eqref{eq_value-func}), is that we consider a setting in which immediate extraction of all resources is a admissible---indeed, we show that such immediate extraction corresponds to a trivial Nash equilibrium. In particular, the main contribution is that we also establish, under certain assumptions, existence and characterization results also for non-trivial Nash equilibria (Section \ref{sec:Naive_Lebesgue_absolutely_continuous_Nash_equilibria}).

We also provide corresponding equilibrium existence and characterization results for the generalized version of the de Finetti game allowing for a state-dependent profit rate integrated against the control processes, with expected reward \eqref{eq:-circ-in-intro}. Here we also consider for the first time, according to our knowledge, a discontinuous profit rate $g$ for the integral  \eqref{eq:-circ-in-intro}. We also allow simultaneous jumps and simultaneous singular control in terms of skew points, thereby contributing the question of how to to handle such simultaneous control (which has not been fully settled, cf. e.g., \cite[Appendix A.1]{de2018stochastic}, \cite[Sec. 2.2]{kwon2015game}, and \cite[Sec. 2]{aid2020nonzero}). We also consider absorption of the state process (at zero). In particular, these features motivate us to introduce a generalized definition for the integral in \eqref{eq:-circ-in-intro} (Section \ref{sec:g-function}). 

We believe that our framework and theoretical results can be useful for finding equilibria also in many other types of games involving singular stochastic control. 
 
The remainder of the paper is structured as follows. In Section \ref{sec:Model-and-assumptions} we formalize the setting of our games and establish a trivial Nash equilibrium corresponding to immediate absorption for the expected reward \eqref{eq_value-func}.  In Section \ref{sec:Naive_Lebesgue_absolutely_continuous_Nash_equilibria}, we report verification and existence results for threshold equilibria for the game corresponding to the expected reward \eqref{eq_value-func} and present an illustrative example. In Section \ref{sec:g-function}, we formalize the generalized game with a state dependent profit rate, establish a trivial Nash equilibrium, report verification and existence results for threshold equilibria, and present illustrative examples. In Section \ref{sec:solution-with-jumps}, we present a case study of our game with a state-dependent profit rate with an equilibrium strategy of bang-bang control type corresponding to reflection at a given boundary and a jump to that boundary if the state process starts above. 
\section{Problem formulation and initial observations}\label{sec:Model-and-assumptions}
In Section \ref{sec:Admissibility} we give the definitions of the set of admissible control strategies as well as the Nash equilibrium. In Section \ref{NE:Bad-subsection} we establish the existence of a trivial Nash equilibrium corresponding to immediate absorption. In Section \ref{subsec:assum} we present the assumptions that are used in the remainder of paper. 

\subsection{Admissible control strategies and global Markovian Nash equilibrium}\label{sec:Admissibility}
Let us first define our notion of admissible control strategies and after this explain how these correspond to controlled SDEs. In view of the Markovian formulation of our game, we consider Markov control strategies defined as follows. 
\begin{definition}[Admissible control strategies]\label{def:Markovian-admissible-controls}
    The set of admissible control strategies is denoted by $\mathbb{L}$. An admissible control strategy $D^i,i=1,2$ corresponds to a 5-tuple:
    \begin{equation*}
        D^i = \left\{\lambda^{i}(\cdot), \{x^{i}_j:j=1,\ldots,n_{i}\},
        \{c^{i}_j:j=1,\ldots,n_{i}\}, \mathcal{J}^{i}(\cdot),B^{i}\right\},
    \end{equation*}
    where:
    \begin{itemize}
        \item $\lambda^{i} :[0,\infty)\rightarrow[0,\infty)$ is a measurable function,
        \item $B^{i}\subseteq [0,\infty)$ is a finite union of closed and connected subsets of $[0,\infty)$,
        \item $\mathcal{J}^{i}:[0,\infty)\rightarrow[0,\infty)$ is a measurable function satisfying: $\mathcal{J}^{i}(x) = 0$ for $x\notin B^{i}$, $x - \mathcal{J}^{i}(x)\geq0$ for all $x\geq0$, and $\mathcal{J}^{i}(x)>0,x \in\mathrm{Int}\left(B^{i}\right)$, 
        \item $x^{i}_j \in  (0,\infty) \setminus\mathrm{Int}\left(B^{i}\right)$ and $c^{i}_j\in (0,1]$ are constants, and $n_{i} \in \mathbb{N}$.
    \end{itemize}
\end{definition}
For a given pair of admissible control strategies $\left(D^1,D^2\right)\in\mathbb{L}\times\mathbb{L}$ the controlled SDE is defined as:
\begin{align}\label{SDE}
\begin{split}
    dX^{D^1,D^2}_t & = \left(\mu-\sum_{i=1}^{2}\lambda^{i}\right)\left(X^{D^1,D^2}_t\right)dt + 
    \sigma\left(X^{D^1,D^2}_t\right)dW_t - 
    \sum_{i=1}^{2}\sum_{j=1}^{n_{i}} c^{i}_j dL_t^{x^{i}_j}\left(X^{D^1,D^2}\right)\\
    &- \sum_{i=1}^{2}\sum_{j = 0}^{\infty}\mathcal{J}^{i}
    \left(\alpha^{j}\left(X_{t-}^{D^1, D^2}\right)\right), \enskip X^{D^1,D^2}_{0-}= x, \enskip
    0 \leq t \leq \tau=\inf\left\{t\geq0:X^{D^1,D^2}_{t}\leq0\right\},
\end{split}
\end{align}
where $\alpha^0(x):=x, \alpha(x):= \left(x - \sum_{i=1}^{2}\mathcal{J}^{i}(x)\right)\lor0$ and $\alpha^j$ denotes $j$ compositions of $\alpha$, and $\left(L_t^{y}\left(X^{D^1,D^2}\right)\right)_{t\geq0}$ is the symmetric local time of $X^{D^1,D^2}$ (see e.g., \cite[Sec. 2.3]{Peskir}).

When there is no risk of confusion, we will write $X$ instead of $X^{D^1,D^2}$.  With a slight abuse of notation we also use the notation $\left(D^i_t\right)_{t\geq0}$, corresponding to to the control strategy $D^i$, to refer to the control processes in \eqref{SDE} given by:
\begin{equation}\label{SDE-control-processes}
    dD^i_{t} =
    \lambda^{i}\left(X^{D^1, D^2}_{t}\right)dt + 
    \sum_{j=1}^{n_{i}}c^{i}_{j} dL^{x_{j}^{i}}_{t}\left(X^{D^1, D^2}\right) + 
    \sum_{j = 0}^{\infty}\mathcal{J}^{i}\left(\alpha^{j}\left(X_{t-}^{D^1, D^2}\right)\right),
    \enskip D^i_{0-}=0,\enskip0\leq t\leq\tau,
\end{equation}
in case a unique strong solution $X^{D^1, D^2}$ to \eqref{SDE} exists; the case of non-existence is treated in Definition \ref{def:value-funcs}.

\begin{remark}[Interpretation of admissible control strategies and the corresponding controlled SDE \eqref{SDE}] \enskip
\begin{itemize}
    \item The function $\lambda^i$ corresponds to the part of the control of player $i$ that is absolutely continuous with respect to the Lebesgue measure. 
    \item The constants $x_j^i$ and $c_j^i$ correspond to player $i$ controlling with a local time push with intensity $c_j^i$ when $X_{t}^{D^1,D^2}=x_j^i$.
    \item  The interpretation of $B^i$ and $\mathcal{J}^{i}$ is that player $i$ controls with a jump of size $\mathcal{J}^{i}(X_{t-}^{D^1,D^2})$ when $X_{t-}^{D^1,D^2}\in B^{i}$. The conditions for $\mathcal{J}^{i}$ imply that $\Delta D^{i}_{t}\leq X_{t-}^{D^1,D^2}$. 
    \item The interpretation of $\alpha(X_{t-}^{D^1,D^2})=
    \left(X_{t-}^{D^1,D^2} - \mathcal{J}^{1}(X_{t-}^{D^1,D^2}) - \mathcal{J}^{2}(X_{t-}^{D^1,D^2})\right)\vee 0$ is that it is the value of the state process after the aggregated jump $\mathcal{J}^{1}(X_{t-}^{D^1,D^2})+\mathcal{J}^{2}(X_{t-}^{D^1,D^2})$. Note that it may be case that this state $\alpha(X_{t-}^{D^1,D^2})$ is also in the jump set, i.e., in $B^{1} \cup B^{2}$, which reveals the purpose of $\alpha$ in \eqref{SDE}; that is, $\alpha$ formalizes how the state process is affected by jumps that occur at the same time. We remark that the situation simplifies if only one player jumps, say $B^{2} = \emptyset$, and the other player jumps only once at a time in the sense that $x\mapsto \mathcal{J}^{1}\left(x-\mathcal{J}^{1}(x)\right)=0$. In this case \eqref{SDE} can be expressed without $\alpha$ as:
    \begin{align*}
        dX^{D^1,D^2}_t & = 
        \left(\mu-\sum_{i=1}^2\lambda^{i}\right)\left(X^{D^1,D^2}_t\right)dt + 
        \sigma\left(X^{D^1,D^2}_t\right)dW_t - 
        \sum_{i=1}^2\sum_{j=1}^{n_{i}} c^{i}_j dL_t^{x^{i}_j}\left(X^{D^1,D^2}\right)\\
        &- \mathcal{J}^{1}\left(X_{t-}^{D^1, D^2}\right),
        \enskip X^{D^1,D^2}_{0-}= x, \enskip  0 \leq t \leq \tau.
    \end{align*}
\end{itemize}
\end{remark}
With a clear understanding of admissible controls, we are now ready to give a formal definition of our expected rewards \eqref{eq_value-func}.
\begin{definition}[Expected rewards]\label{def:value-funcs} 
    Consider a pair of admissible control strategy $\left(D^1,D^2\right)\in \mathbb{L}\times\mathbb{L}$. Suppose the corresponding controlled SDE \eqref{SDE} has a unique strong solution for a fixed $X_{0-}=x \geq0$, then the expected rewards $J_i\left(x,D^1,D^2\right)$ are given by \eqref{eq_value-func} where 
    $\left(D^i_t\right)_{t\geq0}$ are given by \eqref{SDE-control-processes}; and if no unique strong solution exists then:
    \begin{equation*}
        J_i\left(x; D^1, D^2\right):=-\infty. 
    \end{equation*}
\end{definition}
We remark that the convention of assigning the value minus infinity in case the controlled process does not have a solution has been considered before; cf. e.g., \cite{de2023nash,possamai2020zero}.
\begin{remark}
    There is a large literature on existence and uniqueness of SDEs involving local times; see e.g., \cite{bass2007pathwise,bass2005one,blei2013one,blei2013note,le2006one,Lejay2006}. Whenever needed we will refer to relevant existence results in our proofs.
\end{remark}
We are now ready to present the equilibrium definition that we will study in the present paper. We remark that our notion of equilibrium corresponds to so called Markov perfect equilibria.
\begin{definition}[Global Markovian Nash equilibrium]\label{def:Markov-NE}
    A pair of admissible control strategies $(D^1,D^2)\in \mathbb{L}\times\mathbb{L}$ with corresponding controlled SDE \eqref{SDE} is said to be a global Markovian Nash equilibrium if:
    \begin{align*}
        J_1\left(x,D^{1},D^{2}\right) & \geq J_1\left(x,D,D^{2}\right)\\
        J_2\left(x,D^{1},D^{2}\right) & \geq J_2\left(x,D^{1},D\right),
    \end{align*}
    for all admissible control strategies $D\in\mathbb{L}$, and all $x \geq 0$. 
    If $\left(D^1,D^2\right)$ is a global Markovian Nash equilibrium then we say that $J_i\left(x,D^{1},D^{2}\right)$  are the equilibrium values; and if these values coincide then we denote the common equilibrium value by $V(x)$, i.e., $V(x)=J_i\left(x,D^{1},D^{2}\right)$.
\end{definition}

\subsection{Trivial equilibria}\label{NE:Bad-subsection}
It seems intuitive that both players immediately trying to extract more resources than are available (resulting in immediate absorption) should correspond to a Nash equilibrium, and this is indeed true. 

\begin{theorem}\label{thm:trivialNE} 
    A pair of admissible control strategies $(D^1, D^2)\in \mathbb{L}\times\mathbb{L}$ satisfying $B^{i} = [0, \infty)$ and ${\cal J}^{i}(x)=x$ (which implies that $D^i_0= x$ and $\tau=0$ a.s., for any $X_{0-}=x$), is a global Markovian Nash equilibrium. The corresponding equilibrium values are given by:
    \begin{equation*}
        V(x)= \frac{x}{2}, \enskip x \geq 0.
    \end{equation*}
\end{theorem}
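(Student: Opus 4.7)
The plan is first to compute the equilibrium values by direct substitution into \eqref{eq_value-func}, and second to verify the Nash property by showing that any deviation by one player is dominated because the other player's strategy forces immediate absorption.

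For the value computation, with both players using $\mathcal{J}^i(x) = x$ and $X_{0-} = x$, the attempted aggregate jump at $t = 0$ equals $2x$ while only $X_{0-} = x$ is available, so $\tau = 0$ almost surely and only the $t=0$ summand in \eqref{eq_value-func} contributes. Plugging in gives each player $\frac{x}{2x}(2x \wedge x) = x/2$, as claimed.

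For the Nash inequality, fix player 2's strategy as in the statement and let $D \in \mathbb{L}$ be an arbitrary deviation by player 1, with jump function $\mathcal{J}$. The key observation is that $\mathcal{J}^2(X_{0-}) = x$ produces a jump at $t = 0$ of size at least $x$ regardless of $D$, so $\tau = 0$ almost surely and a unique strong solution to \eqref{SDE} exists trivially (the state is absorbed at $0$ immediately), ruling out the $-\infty$ convention of Definition \ref{def:value-funcs}. The continuous-part integral and any contributions for $t > 0$ therefore vanish, and the sum in \eqref{eq_value-func} reduces to the $t=0$ term
\[
J_1(x, D, D^2) = \frac{\mathcal{J}(x)}{\mathcal{J}(x) + x}\bigl((\mathcal{J}(x) + x) \wedge x\bigr) = \frac{\mathcal{J}(x)\, x}{\mathcal{J}(x) + x},
\]
with the standard convention that this value is $0$ if $\mathcal{J}(x) = 0$. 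Since the map $j \mapsto jx/(j+x)$ is monotonically increasing and $\mathcal{J}(x) \in [0, x]$ by admissibility, the right-hand side is bounded by $x/2$, the candidate equilibrium value; the analogous inequality for player 2 follows by symmetry, and the case $x = 0$ is trivial. I do not anticipate a significant obstacle here; the only subtle point is the bookkeeping around \eqref{eq_value-func} when $\mathcal{J}(x) = 0$, where either reading of the summation convention yields a $t=0$ contribution of $0$, so the bound is unaffected.
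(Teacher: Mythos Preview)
Your proof is correct and follows essentially the same approach as the paper, which simply states that the result is immediate from the definitions (\eqref{eq_value-func} and Definitions~\ref{def:value-funcs} and~\ref{def:Markov-NE}); you have spelled out explicitly the value computation and the deviation bound that the paper leaves to the reader.
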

\begin{proof}
    This follows immediately from our definitions; see \eqref{eq_value-func} and Definitions \ref{def:value-funcs} and \ref{def:Markov-NE}. 
\end{proof}

\subsection{Assumptions and the one-player problem in our setting}\label{subsec:assum}
The results of Sections \ref{sec:Naive_Lebesgue_absolutely_continuous_Nash_equilibria} and \ref{sec:g-function} rely on Assumptions \ref{Assumption_1} and \ref{Assumption_2}. Section \ref{sec:solution-with-jumps} relies only on Assumption \ref{Assumption_1}.
\begin{assumption}\label{Assumption_1}
    $\mu:[0,\infty) \rightarrow \mathbb{R}$ and $\sigma:[0,\infty) \rightarrow (0,\infty)$
    are globally Lipschitz continuous.
\end{assumption}
\begin{assumption}\label{Assumption_2} 
    There exists constants $\kappa \geq 0$ and $c>0$ such that $x\mapsto \mu(x) - (r + c)x$ is strictly increasing on $[\kappa, \infty)$.
\end{assumption}
\begin{remark}[On the one-player problem in our setting] \label{infinite-vaue-1-player-probem} 
    It can be shown that Assumptions \ref{Assumption_1} and \ref{Assumption_2} imply that the one-player problem (i.e., of maximizing \eqref{eq_value-func-no-jump}, or equivalently \eqref{eq_value-func}, in the usual sense),  gives an arbitrarily large expected reward for each $X_{0-}^{D^1, D^2}=x>0$; indeed, this can, for example, be seen by Corollary \ref{cor:value_goes_to_inf} (found below).
\end{remark}
\section{Equilibria with symmetric extraction rates above a threshold}
\label{sec:Naive_Lebesgue_absolutely_continuous_Nash_equilibria}
This section is devoted to finding global Markovian Nash equilibria of threshold type (for the game formalized in Section \ref{sec:Admissibility}, with a constant profit rate $g=1$) in line with the objectives outlined in Section \ref{sec:intro}.

We start by making some initial observations of motivational value, which lead up to the formal results Theorem \ref{Theorem-NE-better-verification-theorem} (equilibrium verification) and Theorem \ref{Theorem-NE-better-existence-theorem} (equilibrium existence). Our aim is to search for a symmetric global Markovian Nash equilibrium where each player uses an extraction rate $\lambda_{b}^{\ast}$  of threshold type, meaning that $\lambda_{b}^{\ast}$ only assumes positive values above some boundary $b\geq0$. Our ansatz is that the optimal (in the usual sense) singular stochastic control problem faced by each player when deciding whether to deviate from such a proposed equilibrium has a usual bang-bang type solution with reflection at $b$ and a downward jump to $b$ when starting the process above $b$ (note that we also need that another optimal solution is to use the rate $\lambda_{b}^{\ast}$, in order to have an equilibrium); i.e., our ansatz is to consider a candidate value function given by:
\begin{equation}\label{Vb-g=1}
    V_{b}(x)=\begin{cases}
     C_{1}\psi(x)+C_{2}\varphi(x) & \text{for } 0 \leq x<b\\
        x-b+C_{1}\psi(b)+C_{2}\varphi(b) & \text{for }x\geq b,
    \end{cases}
\end{equation}
where $\psi,\varphi\in\mathcal{C}^{2}\left[0,\infty\right)$ are the fundamental (unique) non-negative increasing and decreasing solutions to:
\begin{equation}\label{eq:ode-for-psi}
    A_{X} f(x) := \frac{\sigma^{2}(x)}{2}f''(x)+\mu(x)f'(x)=rf(x),
\end{equation}
with $\psi(0)=\varphi(\infty)=0$ and $\psi'(0)=\varphi(0)=1$; cf. e.g., \cite[p. 18-19]{borodin2012handbook}, \cite[Sec. 2]{ekstrom2023finetti} and \cite[Lemma 4.1]{shreve1984optimal}. 

Let us try to identify a corresponding candidate equilibrium strategy function $\lambda^\ast_b$, i.e., an equilibrium extraction rate which is such that if both players use it then the corresponding value function is of the form \eqref{Vb-g=1}. To this end we observe that the Hamilton-Jacobi-Bellman equation (HJB) for the optimization problem faced by a deviating player supposing that the other player uses a rate $\lambda^\ast_b$ (cf. e.g., \cite{ekstrom2022detect}), involves:
\begin{align}\label{HJB-first-thing}
\begin{split}
    \frac{\sigma^2(x)}{2} V_{b}''(x) + (\mu(x) - 2\lambda_{b}^{\ast}(x))V_{b}'(x) - 
    rV_{b}(x) +\lambda_b^{\ast}(x) & =0, \enskip\text{for: }x\neq b\\
    \frac{\sigma^2(x)}{2} V_{b}''(x) + (\mu(x) - \lambda_{b}^{\ast}(x)-\lambda)V_{b}'(x) -
    rV_{b}(x) +\lambda & \leq0, \enskip\text{for: }x\neq b \text{ and for all $\lambda\geq0$}.
\end{split}
\end{align}
Our ansatz is furthermore that $V_b'\geq1$, which implies that the second part of \eqref{HJB-first-thing} holds whenever the first part holds (cf. Remark \ref{remark:HJB-necessary}). 
Hence, when also observing that 
 $V_b'(x)=1, x > b$, we find a candidate equilibrium strategy of the form:
\begin{equation*}
    \lambda_b^\ast(x) = \1_{\{x > b\}}(A_X - r) V_b(x).
\end{equation*}
Using $V_b(0)=0$ and the ansatz that $V_b$ is continuously differentiable (implying $V_b'(b-)=1$) we find $C_1= \psi'(b)^{-1}$ and $C_2=0$, so that:
\begin{equation}\label{eq:Vb}
    V_{b}(x)=\begin{cases}
        \frac{\psi(x)}{\psi'(b)} & \text{for } 0 \leq x<b\\
        x - b + \frac{\psi(b)}{\psi'(b)} & \text{for }x\geq b.
    \end{cases}
\end{equation}
This  means that the candidate equilibrium extraction rate specifies to:
\begin{equation}\label{DE:lambda_b}
    \lambda_{b}^{\ast}(x) =\1_{\{x > b\}}
    \left(\mu(x)-rx-r\left(\frac{\psi(b)}{\psi'(b)}-b\right)\right).
\end{equation}

\begin{remark} 
    Let us consider the one-player problem that e.g., player $1$ faces in case player $2$ uses a particular threshold strategy $\lambda_b^\ast$. In order that the optimal value for this problem should be \eqref{eq:Vb}, one would perhaps guess that we should identify $b$ so that $V_b''(b-)=0$ according to the usual smooth fit condition of singular stochastic control; but this is not how we will proceed. Indeed, both players using the same threshold strategy $\lambda_b^\ast$ (given by \eqref{DE:lambda_b}) corresponds to a global Markovian Nash equilibrium for several different fixed $b$ (under suitable conditions, see Theorems \ref{Theorem-NE-better-verification-theorem} and \ref{Theorem-NE-better-existence-theorem}, below), and in general we do not have smooth fit. 
\end{remark}

\begin{theorem}[Verification]\label{Theorem-NE-better-verification-theorem}
    Suppose Assumptions \ref{Assumption_1} and \ref{Assumption_2} hold. Consider a constant $b\geq0$ and suppose that the following conditions hold:
        \begin{align}
        \label{NE:cond2}
        \text{$V_b$ defined in \eqref{eq:Vb} satisfies $V_b'(x)\geq 1,x\geq0$,}\\
        \text{$\lambda_b^\ast$ defined in \eqref{DE:lambda_b} satisfies 
        $\lambda_{b}^{\ast}(x)\geq0,x\geq0$.}\label{NE:cond1}
    \end{align}
    Then, the control strategies $D^i, i=1,2$ given by:
    \begin{align*}
        D^{i}=\{\lambda_b^\ast,\emptyset,\emptyset,0,\emptyset \},
    \end{align*}
    (corresponding to each player $i=1,2$ controlling only with the rate $\lambda_b^\ast$) constitute a global Markovian Nash equilibrium with an  equilibrium value given by $V_b$.
\end{theorem}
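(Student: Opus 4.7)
The plan is to verify the equilibrium property by an It\^o--Tanaka argument. By symmetry it suffices to show that, given player~$2$ employs the rate $\lambda_b^\ast$ (as prescribed by $D^2$), player~$1$'s best-response value equals $V_b(x)$ and is attained by also using $\lambda_b^\ast$.

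First, I would establish the PDE structure of $V_b$. By construction $V_b\in\mathcal{C}^1([0,\infty))$ (since pasting at $b$ yields $V_b'(b-)=1=V_b'(b+)$) and is twice continuously differentiable on $[0,b)\cup(b,\infty)$ with locally bounded $V_b''$. On $(0,b)$, $V_b$ is proportional to $\psi$ and so $A_XV_b=rV_b$; on $(b,\infty)$, $V_b'\equiv 1$ and $V_b''\equiv 0$, so \eqref{DE:lambda_b} gives $A_XV_b(x)-rV_b(x)=\mu(x)-rV_b(x)=\lambda_b^\ast(x)$. Because $\lambda_b^\ast\equiv 0$ below $b$ and $V_b'\equiv 1$ above $b$, both regions yield
\[
A_XV_b(x)-rV_b(x)-\lambda_b^\ast(x)V_b'(x)=0,\qquad x\in(0,\infty)\setminus\{b\}.
\]
Combined with the hypotheses $V_b'\geq 1$ and $\lambda_b^\ast\geq 0$, this identity implies, for every $\lambda\geq 0$ and $x\neq b$,
\[
A_XV_b(x)-rV_b(x)-\bigl(\lambda_b^\ast(x)+\lambda\bigr)V_b'(x)+\lambda=-\lambda\bigl(V_b'(x)-1\bigr)\leq 0,
\]
which is exactly the HJB inequality for player~$1$'s one-player deviation problem.

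Next, I would apply It\^o's formula to $e^{-rt}V_b(X_t^{D,D^2})$ for an arbitrary admissible $D=\{\lambda^1,\{x_j^1\}_j,\{c_j^1\}_j,\mathcal{J}^1,B^1\}$ of player~$1$. Because $V_b'$ is continuous across $b$, no extra local-time term at $b$ is generated, and the set $\{t:X_t=b\}$ has zero Lebesgue measure (non-degenerate $\sigma$), so the ambiguity of $V_b''$ at $b$ is immaterial. The $dt$-contribution is bounded above by the HJB inequality above, namely $-e^{-rs}\lambda^1(X_s)\,ds$; each local-time push of player~$1$ at $x_j^1$ contributes $-c_j^1V_b'(x_j^1)\,dL_s^{x_j^1}\leq -c_j^1\,dL_s^{x_j^1}$; and at each jump, $V_b(X_s)-V_b(X_{s-})=-\int_{X_s}^{X_{s-}}V_b'(y)\,dy\leq-(X_{s-}-X_s)$, which bounds above the negative of player~$1$'s discounted payout at that jump, even after iterations through $\alpha^j$ and the cap $\wedge X_{s-}$ in \eqref{eq_value-func} (player~$2$ has no jumps, so player~$1$ collects the full decrement $X_{s-}-X_s$). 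After a standard localization that removes the local-martingale term, taking expectation and using $V_b(0)=0$ at absorption yields $V_b(x)\geq J_1(x,D,D^2)$.

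For the matching lower bound I would take $D=D^1=\{\lambda_b^\ast,\emptyset,\emptyset,0,\emptyset\}$: the HJB inequality becomes equality almost everywhere, no local-time or jump contributions arise, and the same It\^o computation returns $V_b(x)=J_1(x,D^1,D^2)$. Symmetry handles player~$2$, identifying $V_b$ as the common equilibrium value. I expect the principal technical obstacle to be the passage to the limit $\E_x\bigl[e^{-rt}V_b(X_t^{D,D^2})\bigr]\to 0$, since a deviator may deploy unbounded rates or large jumps that could destabilize $X^{D,D^2}$; this will be handled by exploiting the linear growth $V_b(x)\leq x+\mathrm{const}$, the admissibility constraint $\Delta D_s^1\leq X_{s-}^{D,D^2}$, and a Gronwall-type estimate anchored on Assumption \ref{Assumption_1} together with the non-negativity of $\lambda^1+\lambda_b^\ast$, in order to dominate the expected discounted state process uniformly in the admissible deviation $D$.
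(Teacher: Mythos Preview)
Your overall architecture (symmetry reduction to one player, the PDE identity $A_XV_b-rV_b-\lambda_b^\ast V_b'=0$ away from $b$, the HJB inequality $-\lambda(V_b'-1)\leq 0$, It\^o--Tanaka with localization) is exactly the route the paper takes; the paper simply records Theorem~\ref{Theorem-NE-better-verification-theorem} as the special case $g\equiv1$ of Theorem~\ref{Theorem_verification_g_NE_2} and carries out precisely this computation in Section~\ref{subsec:proof_g_verification}.

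There is, however, a real gap in your handling of the terminal limit, and you have the difficulty located in the wrong place. For the \emph{upper bound} $V_b(x)\geq J_1(x,D,D^2)$ against an arbitrary deviation $D$, you do \emph{not} need $\E_x\bigl[e^{-rt}V_b(X_t)\bigr]\to 0$. The paper bounds the localized terminal value $e^{-r(\tau_n\wedge\tau\wedge n)}V_b(X_{(\tau_n\wedge\tau\wedge n)-})$ below by the jump payout at that time (using $V_b'\geq 1$ and $V_b(0)=0$, hence $V_b(y)\geq y-(y-\Delta D)\vee 0$), absorbs it into the payout sum, and passes to the limit by monotone convergence of the nonnegative payouts. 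No growth estimate on the deviator's state process is required.

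The genuine obstacle is the \emph{equality} $J_1(x,D^1,D^2)=V_b(x)$ when both players use $\lambda_b^\ast$: here one must show
\[
\E_x\Bigl[e^{-r(\tau_n\wedge\tau\wedge n)}V_b\bigl(X_{\tau_n\wedge\tau\wedge n}\bigr)\Bigr]\longrightarrow 0
\]
for the equilibrium-controlled process. Your proposed Gronwall bound anchored on Assumption~\ref{Assumption_1} and ``nonnegativity of $\lambda^1+\lambda_b^\ast$'' cannot deliver this: nonnegativity only yields drift $\leq\mu(x)$, and Assumption~\ref{Assumption_1} permits $\mu$ to grow linearly at an arbitrary rate, so $e^{-rt}\E_x[X_t]$ need not decay. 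The paper instead uses the explicit identity $\mu(x)-2\lambda_b^\ast(x)=2rx+\text{const}-\mu(x)$ for large $x$, which by Assumption~\ref{Assumption_2} is eventually $\leq (r-c)x+\text{const}$, applies a comparison principle with an auxiliary diffusion $\xi$ of this drift, and closes via $n\,\E_x[e^{-r\tau_n^{\xi}}]=n\,\psi_{\kappa'}(x)/\psi_{\kappa'}(n)\to 0$ from the fundamental-solution growth Lemma~\ref{Lemma_that_was_previously_an_assumption}(III). Assumption~\ref{Assumption_2} is essential at this step; without it (and the specific form of $\lambda_b^\ast$) the terminal term does not vanish.
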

\begin{proof}
    This result is a specific case of Theorem \ref{Theorem_verification_g_NE_2} which is proved in Section \ref{subsec:proof_g_verification} below.
\end{proof}
In relation to \eqref{NE:cond2} we note that $V_b'(x)=1$  for $x\geq b$, for any $b\geq0$.

\begin{remark}\label{remark:HJB-necessary}
    Note that condition \eqref{NE:cond2} is necessary and sufficient for the inequality:
    \begin{equation*}
        \lambda(1-V_{b}'(x))= 
        \frac{1}{2}\sigma^2(x) V_{b}''(x) + (\mu(x) - \lambda_{b}^\ast(x) - \lambda)V_{b}'(x) - 
        rV_{b}(x) + \lambda \leq0,
    \end{equation*}
    to hold for all $\lambda\geq0$ and $x\neq b$ (the equality in the above is due to \eqref{eq:Vb} and \eqref{DE:lambda_b}). This inequality corresponds to the inequality in \eqref{HJB-first-thing}. The interpretation of \eqref{NE:cond1} is that this condition implies that the strategy $\{\lambda_b^\ast,\emptyset,\emptyset,0,\emptyset \}$ is admissible. Therefore an intuitive interpretation of assumptions \eqref{NE:cond2} and \eqref{NE:cond1} is that together they correspond to $\lambda_b^\ast$ being the maximizing rate of the HJB \eqref{HJB-first-thing} for the optimization problem of one of the players, supposing that the other player uses the same rate.
\end{remark}

\begin{theorem}[Equilibrium existence]\label{Theorem-NE-better-existence-theorem}
    Suppose Assumptions \ref{Assumption_1} and \ref{Assumption_2} hold. 
    \begin{enumerate}[(I)]
        \item There exists a constant $\underline{b}\geq 0$ such that conditions \eqref{NE:cond2} and \eqref{NE:cond1} hold (and we thus have a global Markovian Nash equilibrium as in Theorem \ref{Theorem-NE-better-verification-theorem}), for any $b \geq \underline{b}$. 
        \item Consider a fixed for $b\geq 0$. If $x\mapsto\mu(x)-rx$ is non-decreasing on $[0, b)$ with $\mu(0)\geq0$, then \eqref{NE:cond2} holds. Hence, if also \eqref{NE:cond1} holds, then we have a global Markovian Nash equilibrium as in Theorem \ref{Theorem-NE-better-verification-theorem} for $b$.
        \item If $x\mapsto\mu(x)-rx$ is non-decreasing on $[0, \infty)$ with $\mu(0)\geq0$ then  \eqref{NE:cond2} and \eqref{NE:cond1} hold (and thus we have a global Markovian Nash equilibrium as in Theorem \ref{Theorem-NE-better-verification-theorem}), for any $b\geq 0$.
    \end{enumerate}
\end{theorem}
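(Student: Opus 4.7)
The key auxiliary object is $g(x):=r\psi(x)-\mu(x)\psi'(x)$. From \eqref{eq:ode-for-psi} we have $\tfrac{\sigma^2(x)}{2}\psi''(x)=g(x)$, so $\psi''(x)\le0\iff g(x)\le 0$. Assumption \ref{Assumption_1} gives $\mu$ Lipschitz, hence $g$ is absolutely continuous with a.e.\ derivative
\[
g'(x) = (r-\mu'(x))\psi'(x) - \tfrac{2\mu(x)}{\sigma^2(x)}g(x).
\]
With the integrating factor $I(x):=\exp\!\bigl(\int_0^x\tfrac{2\mu(s)}{\sigma^2(s)}ds\bigr)>0$ this becomes
\[
(Ig)'(x) = I(x)\bigl(r-\mu'(x)\bigr)\psi'(x), \qquad (\star)
\]
with boundary value $g(0)=-\mu(0)$. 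Noting that $V_b'(x)=\psi'(x)/\psi'(b)$ on $[0,b)$ and $V_b'(x)=1$ on $[b,\infty)$, condition \eqref{NE:cond2} is just the requirement that $\psi'(b)\le\min_{[0,b]}\psi'$; and in view of the identity
\[
\lambda_b^\ast(x) = (\mu(x)-rx) + rb - r\tfrac{\psi(b)}{\psi'(b)}, \quad x>b,
\]
condition \eqref{NE:cond1} is controlled by the sign of $g(b)$ together with monotonicity of $\mu(x)-rx$.

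\textbf{Parts (II) and (III).} Under the hypothesis of (II), $\mu'(x)\ge r$ a.e.\ on $[0,b)$ and $g(0)=-\mu(0)\le0$, so by $(\star)$ the product $Ig$ is non-increasing on $[0,b)$, giving $g\le 0$ and hence $\psi''\le0$ there. Thus $\psi'$ is non-increasing on $[0,b]$ and \eqref{NE:cond2} holds. For (III), apply (II) for the given $b$; for \eqref{NE:cond1}, the global monotonicity of $\mu(x)-rx$ yields, for $x>b$,
\[
\lambda_b^\ast(x) \;\ge\; \mu(b) - r\tfrac{\psi(b)}{\psi'(b)} \;=\; -g(b)/\psi'(b) \;\ge\; 0,
\]
using $g(b)\le0$ from (II) and $\psi'(b)>0$.

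\textbf{Part (I).} On $[\kappa,\infty)$ Assumption \ref{Assumption_2} gives $\mu'(x)\ge r+c$ a.e., so $(\star)$ yields $(Ig)'(x)\le -c\,I(x)\psi'(x)\le0$. I proceed in four steps. \emph{(i)} There exists $x^\ast\ge\kappa$ with $g(x^\ast)\le 0$: otherwise $\psi''>0$ on $[\kappa,\infty)$, so $\psi'\ge\psi'(\kappa)>0$ there, and integrating $(\star)$ gives $(Ig)(x)\le (Ig)(\kappa)-c\psi'(\kappa)\int_\kappa^x I(s)ds\to-\infty$, contradicting $g>0$. \emph{(ii)} Since $(Ig)$ is non-increasing on $[\kappa,\infty)$, $g\le0$ on $[x^\ast,\infty)$; hence $\psi'$ is non-increasing there, with some limit $L\ge0$. \emph{(iii)} $L=0$: if $L>0$, choose $x^{\ast\ast}\ge x^\ast$ so that $r\psi'(x^{\ast\ast})<L(r+c)$ (possible since $\psi'(x^{\ast\ast})\to L$ and $r<r+c$); then for $x\ge x^{\ast\ast}$ the bounds $\psi(x)\le\psi'(x^{\ast\ast})x+O(1)$ and $\mu(x)\psi'(x)\ge L(r+c)x-O(1)$ give $g(x)\le -C_1 x+O(1)$ for some $C_1>0$. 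Since Assumption \ref{Assumption_1} forces $\sigma^2(x)=O(x^2)$, we get $\psi''(x)=2g(x)/\sigma^2(x)\le -C_2/x$ eventually, whose integral diverges, contradicting $\psi'>0$. \emph{(iv)} Hence $\psi'(b)\to0$; take $\underline{b}\ge\max(\kappa,x^\ast)$ large enough that $\psi'(\underline{b})\le m:=\min_{[0,x^\ast]}\psi'>0$. For $b\ge\underline{b}$, non-increase of $\psi'$ on $[x^\ast,\infty)$ gives $\psi'(b)\le m\le\psi'(x)$ for $x\in[0,b]$, establishing \eqref{NE:cond2}; \eqref{NE:cond1} follows as in (III) using $g(b)\le0$ from (ii) and strict monotonicity of $\mu(x)-rx$ on $[\kappa,\infty)$.

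\textbf{Anticipated obstacle.} Steps (i)-(ii) are textbook Gronwall-type arguments with the integrating factor $I$. The crux is Step (iii): one must extract from the quantitative superlinear growth of $\mu$ in Assumption \ref{Assumption_2} enough leverage to dominate the worst-case $O(x^2)$ growth of $\sigma^2$ allowed by Assumption \ref{Assumption_1}, and conclude that $\psi'$ cannot have a strictly positive limit. A secondary technicality, pervasive throughout, is that $\mu$ is only Lipschitz, so the manipulations of $g'$ and $(Ig)'$ must be interpreted in the absolutely continuous sense, with the inequality $\mu'\ge r+c$ holding only almost everywhere on $[\kappa,\infty)$.
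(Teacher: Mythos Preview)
Your proof is correct and takes a somewhat different route from the paper's. The paper treats this theorem as the special case $g\equiv 1$ of Theorem \ref{Theorem_existence_g_NE_1}, with the real work packaged into two appendix lemmas (Lemma \ref{Lemma_that_I_don't_know_what_to_call} and Lemma \ref{Cor:concave-psi}) that analyze $\psi''$ directly rather than via an integrating factor. For the concavity under the hypothesis of (II) the paper argues by contradiction at a hypothetical first sign change of $\psi''$; for Part (I) it examines one-sided difference quotients at zeros of $\psi''$ on $[\kappa,\infty)$ to show that $\psi$ is convex-then-concave there with a unique inflection point, and then proves $\psi'\to 0$ essentially as in your Step (iii). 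Your integrating-factor identity $(Ig)'=I(r-\mu')\psi'$ replaces those local arguments by a single global monotonicity of $Ig$, which is cleaner and handles the merely-Lipschitz $\mu$ via absolute continuity in one stroke; the paper's bare-hands approach is slightly more elementary and extracts the unique inflection point as a byproduct, and it avoids having to check that $\int_\kappa^\infty I=\infty$ (which you use in Step (i) without comment---it does follow, since Assumption \ref{Assumption_2} forces $\mu>0$ eventually and hence $I$ is eventually bounded below by a positive constant). Two minor points to tighten: in Step (iii), to pass from $\psi'(x)\ge L$ and $\mu(x)\ge (r+c)x-O(1)$ to $\mu(x)\psi'(x)\ge L(r+c)x-O(1)$ you should also take $x^{\ast\ast}$ large enough that $\mu\ge 0$ on $[x^{\ast\ast},\infty)$; and your auxiliary function name $g$ collides with the paper's profit rate $g$, so consider renaming.
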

\begin{proof}
    This result is a specific case of Theorem \ref{Theorem_existence_g_NE_1} proved in Section \ref{subsec:proof_g_existence} below.
\end{proof}

\begin{corollary}\label{cor:value_goes_to_inf} 
    Suppose Assumptions \ref{Assumption_1} and \ref{Assumption_2} hold. Then, there exists a constant $\underline{b} \geq 0$ such that we have a global Markovian Nash equilibrium for any $b \geq \underline{b}$, and  the equilibrium value $V_b(x)$ is strictly increasing in $b$ on $(\underline{b},\infty)$ for fixed $x>0$, with:
    \begin{equation*}
        \lim_{b\rightarrow\infty}V_b(x)=\infty.
    \end{equation*}
\end{corollary}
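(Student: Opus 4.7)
The plan is to treat (I) as immediate from Theorem \ref{Theorem-NE-better-existence-theorem}(I), enlarging $\underline{b}$ if necessary so that $\underline{b}\geq\kappa$ without loss of generality. Using the explicit formula \eqref{eq:Vb}, a direct computation in each of the two regions $b\leq x$ and $b\geq x$ yields the common expression
\begin{equation*}
    \partial_b V_b(x) \;=\; -\,\frac{\psi(x\wedge b)\,\psi''(b)}{\psi'(b)^2},
\end{equation*}
so for any fixed $x>0$ the sign of $\partial_b V_b(x)$ is that of $-\psi''(b)$.

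Part (II) therefore reduces to proving that $\psi'$ is strictly decreasing on $[\underline b,\infty)$. First, the condition \eqref{NE:cond2}, read through $V_b'(x) = \psi'(x)/\psi'(b)$ on $[0,b)$, says $\psi'(x)\geq\psi'(b)$ for every $x\in[0,b]$. Applied at any $\underline b\leq b_1<b_2$ (take $b=b_2$ and $x=b_1$), this forces $\psi'$ to be non-increasing on $[\underline b,\infty)$, equivalently $\psi''\leq 0$ there. For strictness I would then verify that $\psi''$ cannot vanish identically on any subinterval of $[\kappa,\infty)$: if it did on some $[a,b]\subseteq[\kappa,\infty)$, then $\psi(x)=kx+C$ on $[a,b]$ (with $k\neq 0$ since $\psi>0$ on $(0,\infty)$), and plugging into \eqref{eq:ode-for-psi} forces $\mu(x)=rx+rC/k$ on $[a,b]$; this makes $\mu(x)-(r+c)x$ strictly decreasing there, contradicting Assumption \ref{Assumption_2}. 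Since $\psi''$ is continuous and $\leq 0$ on $[\underline b,\infty)$ but not identically zero on any subinterval, the displayed formula integrates to
\begin{equation*}
    V_{b_2}(x) - V_{b_1}(x) \;=\; -\int_{b_1}^{b_2}\frac{\psi(x\wedge s)\,\psi''(s)}{\psi'(s)^2}\,ds \;>\; 0
\end{equation*}
for any $x>0$ and $\underline b\leq b_1<b_2$, proving (II).

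The main obstacle is part (III). Since $\psi'$ is non-increasing on $[\underline b,\infty)$, the limit $L:=\lim_{b\to\infty}\psi'(b)\in[0,\psi'(\underline b)]$ exists, and the statement reduces to $L=0$, since then $V_b(x)=\psi(x)/\psi'(b)\to\infty$ for any fixed $x>0$. Assuming $L>0$ for contradiction, I exploit the strict inequality $rL<L(r+c)$: because $\psi'(B)\searrow L$ as $B\to\infty$, I may choose $B\geq\underline b$ large enough that $M:=\psi'(B)$ still satisfies $rM<L(r+c)$. For $b\geq B$ one has $L\leq\psi'(b)\leq M$, hence $\psi(b)\leq Mb+C_0$, while Assumption \ref{Assumption_2} gives $\mu(b)\geq(r+c)b+C_\mu$ for $b\geq\kappa$. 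Substituting into \eqref{eq:ode-for-psi} rearranged as $\psi''(b)=2(r\psi(b)-\mu(b)\psi'(b))/\sigma^2(b)$ yields
\begin{equation*}
    \psi''(b)\;\leq\;\frac{2\bigl((rM-L(r+c))\,b + \mathrm{const}\bigr)}{\sigma^2(b)},
\end{equation*}
and because $\sigma$ is globally Lipschitz (so $\sigma^2(b)=O(b^2)$) while the numerator is asymptotically a negative multiple of $b$, one obtains $\psi''(b)\leq -D/b$ for some $D>0$ and all $b$ sufficiently large. Integration then delivers $\psi'(b)\leq\psi'(B)-D\log(b/B)\to-\infty$, contradicting $\psi'\geq L>0$. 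Hence $L=0$ and the limit claim follows.
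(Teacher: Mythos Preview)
Your proof is correct, and the derivative formula $\partial_b V_b(x)=-\psi(x\wedge b)\psi''(b)/\psi'(b)^2$ matches the paper's computation (cf.\ the proof of Corollary~\ref{cor:value_goes_to_inf_g} with $g\equiv1$). The route, however, differs from the paper's. The paper simply invokes Lemma~\ref{Lemma_that_I_don't_know_what_to_call}, which packages the key facts about $\psi$: (I) $\psi'>0$, (II) $\psi$ is convex--concave on $[\kappa,\infty)$ with a single inflection point $b_2$ (so $\psi''<0$ strictly beyond $b_2$), and (III) $\psi'(n)\to0$. You instead recover only what is needed, inline: you extract $\psi''\le0$ on $[\underline b,\infty)$ directly from the verified condition~\eqref{NE:cond2} (a short and clever shortcut), exclude flat pieces of $\psi'$ via a linear ansatz contradicting Assumption~\ref{Assumption_2}, and prove $\psi'\to0$ by the direct estimate $\psi''(b)\le -D/b$ followed by integration (the paper's Lemma~\ref{Lemma_that_I_don't_know_what_to_call}(III) argues instead via a subsequence along which $x_k\psi''(x_k)\to0$). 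Your approach yields a self-contained argument that avoids the full convex--concave classification; the paper's approach is shorter here because the structural lemma is reused elsewhere. One small wording issue: ``$k\neq0$ since $\psi>0$'' is loose---either note that $k=0$ forces $\psi\equiv C$ and then the ODE gives $rC=0$, contradicting $\psi>0$, or simply cite $\psi'>0$ (Lemma~\ref{Lemma_that_I_don't_know_what_to_call}(I)).
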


\begin{proof}
    To see that the first claim holds, we select $\underline{b}$ so that the conditions of Theorem \ref{Theorem-NE-better-existence-theorem}(I) holds. To see that the second claim holds we further specify $\underline{b}$ so that we also have $\underline{b}\geq b_2$ (using the notation for $b_2$ from Lemma \ref{Lemma_that_I_don't_know_what_to_call}(II)), and then use \eqref{eq:Vb} together with Lemma \ref{Lemma_that_I_don't_know_what_to_call}(II). The third claim follows similarly from Lemma \ref{Lemma_that_I_don't_know_what_to_call}(III).
\end{proof}

\begin{example}\label{first-example}
    Let $\mu(x)=\mu x$ and $\sigma(x)=\sigma$, where $\mu>r$ and $\sigma>0$ are here constants, so that the uncontrolled process is an Ornstein-Uhlenbeck process without mean-reversion. Assumptions \ref{Assumption_1} and \ref{Assumption_2} are immediately verified. It is directly verified that the conditions of Theorem \ref{Theorem-NE-better-existence-theorem}(III) holds, so that we have a global Markovian Nash equilibrium of the kind in Theorem \ref{Theorem-NE-better-existence-theorem}, for any threshold $b \geq 0$. The value function $V_b$ and extraction rate $\lambda_b^\ast$ for these equilibria can be calculated using standard numerical methods for ordinary differential equations (ODEs).  Illustrations are provided in Figure \ref{fig:V_lambda_x_b}.
    \begin{figure}[H]
        \centering
        \includegraphics[width=0.45\linewidth]{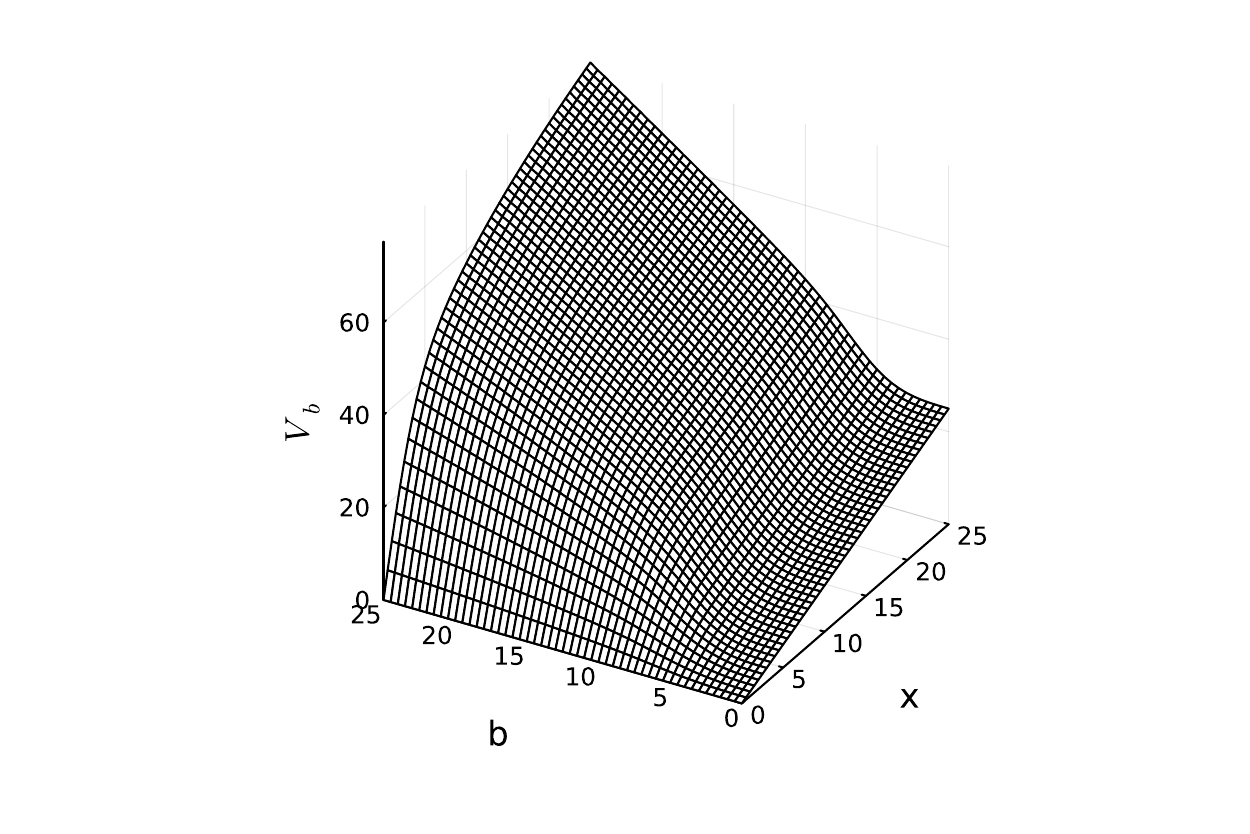}
        \includegraphics[width=0.45\linewidth]{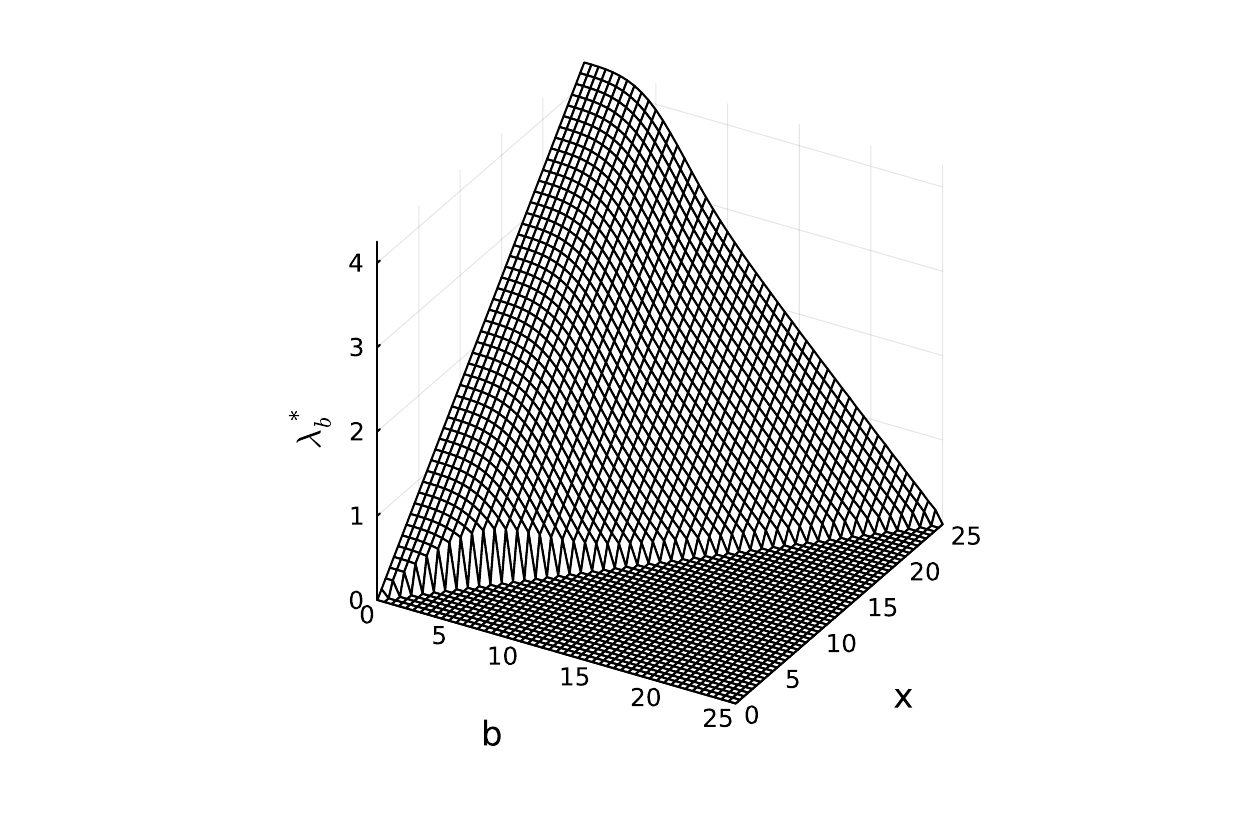}
        \caption{Illustrations for Example \ref{first-example}, for  for the parameters $r=0.08,\mu=0.25$ and $\sigma=2$. The value function $V_b$ is increasing in $x$ as well as in $b$. The extraction rate $\lambda_b^\ast$ is increasing in $x$ and decreasing in $b$.}
        \label{fig:V_lambda_x_b}
    \end{figure}
\end{example}

\section{Generalizing to a state-dependent profit rate}\label{sec:g-function}
\allowdisplaybreaks
In this section we consider a generalization of our game by broadening the definition of the reward functionals (cf. \eqref{eq_value-func} Definition \ref{def:value-funcs}) by allowing a state-dependent profit rate. In particular, we here re-define the expected reward to player $i$ as:
\begin{align}\label{g-function-payouts}
    J_{i}\left(x; D^1,D^2\right) :=
    \E_{x}\left[\int_{[0,\tau]} e^{-rt}g\left(X^{D^1,D^2}_{t-}\right)\circ d D_{t}^{i}\right],
\end{align}
where the function $g:[0,\infty) \rightarrow (0,\infty)$ is RCLL and we define:
\begin{align}\label{g-function-payouts2}
\begin{split}
     \int_{[0,\tau]} e^{-rt}g\left(X^{D^1,D^2}_{t-}\right)\circ d D_{t}^{i}:=
     &\int_{0}^{\tau}e^{-rt} \frac{g\left(X^{D^1,D^2}_{t-}\right) + g\left(X^{D^1,D^2}_{t-}-\right)}{2}d\left(D_{t}^{i}\right)^{c}\\
    +&\sum_{0\leq t\leq\tau}e^{-rt}\frac{\Delta D_{t}^{i}}{\sum_{j=1}^2\Delta D_t^{j}}
    \left(G\left(X^{D^1,D^2}_{t-}\right)-G\left(\left(X^{D^1,D^2}_{t-} - \sum_{j=1}^2\Delta D_t^{j}\right)\lor0\right)\right),\\
     G(x):= &\int_{0}^{x}g(u)du.
\end{split}
\end{align}
(Recall that Section \ref{sec:intro} discusses the relation of our definition \eqref{g-function-payouts2}  to previous literature). As in Definition \ref{def:value-funcs} we also define the reward functionals \eqref{g-function-payouts} to take the value $-\infty$ if the corresponding SDE \eqref{SDE} has no unique strong solution.

It it is easily seen that if $g\equiv1$, then  \eqref{g-function-payouts} collapses into our previous expected reward \eqref{eq_value-func}. 
\begin{remark}[Interpretation of the reward \eqref{g-function-payouts}]\label{reward-lit-rem-g} 
Let us give an interpretation the part:
    \begin{equation}\label{eq:state-dep-gpart-conta}
        \int_{0}^{\tau}e^{-rt}\frac{g\left(X^{D^1,D^2}_{t-}\right) + 
        g\left(X^{D^1,D^2}_{t-}-\right)}{2}d\left(D_{t}^{i}\right)^{c}
    \end{equation}
    of \eqref{g-function-payouts2}. In case $g$ is continuos,  \eqref{eq:state-dep-gpart-conta} collapses into $\displaystyle\int_{0}^{\tau}e^{-rt}g\left(X^{D^1,D^2}_{t-}\right)d\left(D_{t}^{i}\right)^{c}$
    which highlights the interpretation of \eqref{eq:state-dep-gpart-conta} as a state-dependent profit rate. Moreover, considering the special case that player $i$ acts alone so that the state process is reflected at a point $x^i$, in particular   $\left(D_{t}^{i}\right)=\left(D_{t}^{i}\right)^{c}=  (L_t^{x^i})$, reveals the meaning of the average in \eqref{eq:state-dep-gpart-conta}; in particular, the interpretation is in this case that the profit rate for reflection at $x^i$ is calculated using the average of $g\left(x^i\right)$ and $g\left(x^i-\right)$.
    
    Now suppose player $1$, say, controls with a jump $\Delta D_t^{1}>0$ at time $t-$ and that $\Delta D_t^{2}= 0$, then the value due to this jump for player $1$ is the discounted expected value of:
    \begin{equation*}
        G\left(X^{D^1,D^2}_{t-}\right)-
        G\left(X^{D^1,D^2}_{t-}-\Delta D_t^{i}\right)=
        \int_{X^{D^1,D^2}_{t-}-\Delta D_t^{1}}^{X^{D^1,D^2}_{t-}}g(u)du,
    \end{equation*}
    (recall that $\Delta D_t^{1}\leq X^{D^1,D^2}_{t-}$, by Definition \ref{def:Markovian-admissible-controls}). This highlights that \eqref{g-function-payouts2} corresponds to valuing lump sum extraction by integrating the profit rate over an interval corresponding to such an extraction. In general, we interpret the part: 
    \begin{equation*}
        G\left(X^{D^1,D^2}_{t-}\right)-
        G\left(\left(X^{D^1,D^2}_{t-} - \sum_{j=1}^2\Delta D_t^{j}\right)\lor0\right)
    \end{equation*}
    of \eqref{g-function-payouts2} as the total value of an aggregated extraction of the amount: 
    $$\displaystyle X^{D^1,D^2}_{t-}-\left(X^{D^1,D^2}_{t-}-\sum_{j=1}^2\Delta D_t^{j}\right)\lor0 = X^{D^1,D^2}_{t-} \wedge \sum_{j=1}^2\Delta D_t^{j}$$
    (i.e., the amount is capped in a way corresponding to it not being possible to extract more resources than are available), whereas the interpretation of the part $\displaystyle\frac{\Delta D_{t}^{i}}{\sum_{j=1}^2\Delta D_t^{j}}$ of \eqref{g-function-payouts2}, is that the total value extracted is divided among the players proportionally according to their attempted extraction amounts.
\end{remark}
 
\begin{remark}\label{remark:motivating-example}
An example for the profit rate $g$ in the expected reward \eqref{g-function-payouts}, and the associated integral $G$, is:
    \begin{equation}\label{eq:motivating-g}
        g(x) = \begin{cases}
            \frac{1}{2} & \text{ for: }  0 \leq x < \ell\\
            1 & \text{ for: }x \geq \ell,
        \end{cases}\quad 
        G(x) = \frac{x\land l}{2} + (x-l)\lor0, \enskip x \geq 0, \enskip \text{where $\ell\geq0$ is constant.}
    \end{equation}
    In particular, the case \eqref{eq:motivating-g} corresponds to there being a critical mass of resources $\ell$, dividing the state space into a low profitability region $(0,\ell)$ and a high profitability region $[\ell,\infty)$; see Example \ref{Example:g_jump} for an equilibrium analysis for this particular, and related, $g$ function(s).
\end{remark}
We are now ready to present the equilibrium definition, which is completely analogous to Definition \ref{def:Markov-NE}.
\begin{definition}[Global Markovian Nash equilibrium]\label{def:Markov-NE-for-g}
    A pair of admissible control strategies $(D^1,D^2)\in\mathbb{L}\times\mathbb{L}$ with corresponding controlled SDE \eqref{SDE} is said to be a global Markovian Nash equilibrium for the expected rewards given by \eqref{g-function-payouts} if: 
    \begin{align*}
        J_1\left(x,D^{1},D^{2}\right) & \geq J_1\left(x,D,D^{2}\right)\\
        J_2\left(x,D^{1},D^{2}\right) & \geq J_2\left(x,D^{1},D\right),
    \end{align*}
    for all admissible control strategies $D\in\mathbb{L}$, and all $x \geq 0$.
\end{definition}

We find the following trivial equilibria (analogous to those in Theorem \ref{thm:trivialNE}). 
\begin{theorem}\label{thm:trivialNE-g} 
    A pair of admissible control strategies $(D^1, D^2)\in \mathbb{L}\times\mathbb{L}$ satisfying $B^{i} = [0, \infty)$ and ${\cal J}^{i}(x)=x$ (which implies that $D^i_0= x$ and $\tau=0$ a.s., for any $X_{0-}=x$), is a global Markovian Nash equilibrium. The corresponding equilibrium values are given by:
    \begin{equation*}
        V(x)= \frac{G(x)}{2}, \enskip x \geq 0.
    \end{equation*}
\end{theorem}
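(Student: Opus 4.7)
The plan is to verify both the claimed value and the equilibrium property by direct substitution into the definitions, exploiting the fact that the prescribed strategy pair forces $\tau=0$ almost surely. First I evaluate $J_i$ under the prescribed pair: since each player attempts a jump of size $x$ at $t=0$, the iteration in \eqref{SDE} involving $\alpha$ brings $X$ to $0$ at time $0$, so $\tau = 0$ and $\alpha^j(X_{0-}) = 0$ for $j \geq 1$. The continuous part of \eqref{g-function-payouts2} contributes nothing, and the jump term at $t=0$ contributes $\frac{x}{2x}\bigl(G(x) - G(0)\bigr) = G(x)/2$ (recalling $G(0) = 0$), which is the claimed equilibrium value.

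For the Nash property, fix player 2's strategy and consider an arbitrary admissible deviation $D \in \mathbb{L}$ by player 1. If the controlled SDE \eqref{SDE} has no unique strong solution, then $J_1(x, D, D^2) = -\infty$ by the convention carried over from Definition \ref{def:value-funcs} to the generalized reward \eqref{g-function-payouts}, and the inequality is trivial. Otherwise, player 2's jump alone still forces $\tau = 0$ since the aggregated jump at $t=0$ is at least $x$, regardless of player 1's admissible $\lambda^1$, local-time parameters, $B^1$, or $\mathcal{J}^1$. The continuous and local-time contributions to $J_1$ therefore vanish, and the reward collapses to the $t=0$ jump term $\frac{y}{y+x}\bigl(G(x) - G((x - y - x)\vee 0)\bigr) = \frac{y}{y+x}\, G(x)$, with $y := \Delta D_0^1 = \mathcal{J}^1(x) \in [0,x]$.

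Since $g>0$ implies $G \geq 0$ and $y \mapsto y/(y+x)$ is non-decreasing on $[0,x]$, this expression is bounded by $\frac{x}{2x}G(x) = G(x)/2$, attained by the prescribed strategy. Hence player 1 has no profitable deviation; symmetry handles player 2. The main---and essentially only---delicate point is the bookkeeping around the simultaneous-jump term in \eqref{g-function-payouts2}: one needs to check that the denominator equals $y+x$, that the inner $G$ at the argument $(x - y - x)\vee 0$ reduces to $G(0) = 0$ because $y+x \geq x$, and that no additional term survives the absorption at $t=0$. All of these are immediate from Definition \ref{def:Markovian-admissible-controls} and the construction \eqref{g-function-payouts2}, so the proof reduces to the one-line verification in the style of Theorem \ref{thm:trivialNE}.
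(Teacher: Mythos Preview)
Your proof is correct and follows essentially the same approach as the paper, which records only that the result ``follows from Definition \ref{def:Markov-NE-for-g} and the definition in \eqref{g-function-payouts}--\eqref{g-function-payouts2} (recalling also that we have $g\geq0$).'' You have simply spelled out the bookkeeping that the paper leaves implicit: the aggregated jump at $t=0$ is at least $x$, so $\tau=0$, the continuous part of the reward vanishes, and the remaining jump term $\frac{y}{y+x}G(x)$ is maximized at $y=x$ by monotonicity and non-negativity of $G$.
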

\begin{proof}
This follows from  Definition \ref{def:Markov-NE-for-g} and the definition in \eqref{g-function-payouts}--\eqref{g-function-payouts2} (recalling also that we have $g\geq0$).
\end{proof}

\subsection{Equilibria with symmetric extraction rates and skew points above a threshold}\label{subsec:g-function-better-ne}
We will now show that the game of the present section has global Markovian Nash equilibria of symmetric threshold type in the sense that the players extract with the same rates and skew points and only above a boundary $b$, under suitable conditions. The main results are Theorem \ref{Theorem_verification_g_NE_2} (equilibrium verification) 
and Theorem \ref{Theorem_existence_g_NE_1} (equilibrium existence).

Let us start with some motivating observations. In analogy with Section \ref{sec:Naive_Lebesgue_absolutely_continuous_Nash_equilibria} we make the ansatz that the value function is on the form: 
\begin{equation}\label{eq:newVb}
    V_{b}(x) = \begin{cases}
        \frac{g(b)}{\psi'(b)}\psi(x) & \text{ for: } 0 \leq x<b\\
        \frac{g(b)}{\psi'(b)}\psi(b) + G(x) - G(b) & \text{ for: }x\geq b,
    \end{cases} 
\end{equation}
where $b\geq0$  is some threshold value to be determined, and $\psi$ is the fundamental increasing solution of the ODE corresponding to the uncontrolled SDE (defined in the beginning of Section \ref{sec:Naive_Lebesgue_absolutely_continuous_Nash_equilibria}).

Supposing for the moment that $g$ is differentiable (this is not assumed for any of the results in this section) we have that arguments similar to those in Section \ref{sec:Naive_Lebesgue_absolutely_continuous_Nash_equilibria} yield that we should try to find an equilibrium control rate in line with \eqref{HJB-first-thing} when also adjusting for the presence of the profit rate $g$. In particular, this yields that the equilibrium candidate value function $V_b$ in \eqref{eq:newVb} and a corresponding threshold rate $\lambda^*_b$
should satisfy:
\begin{align}\label{HJB:secind-thing}
\begin{split}
    \frac{\sigma^2(x)}{2}V_b''(x) + (\mu(x)-2\lambda_b^\ast(x))V_b'(x)-
    rV_{b}(x)+ \lambda_b^\ast(x)g(x)&=0,\enskip\text{for: }x\neq b\\
    \frac{\sigma^2(x)}{2}V_b''(x) + (\mu(x)-\lambda_b^\ast(x)-\lambda)V_b'(x)-
    rV_{b}(x) + \lambda g(x)&\leq0,\enskip\text{for: }x\neq b\text{ and all: }\lambda\geq0.
\end{split}
\end{align}
Given the ansatz $V_b'\geq g$ and the observation that $V_b'(x)=g(x),x\geq b$, this immediately yields, as in Section \ref{sec:Naive_Lebesgue_absolutely_continuous_Nash_equilibria}, a candidate equilibrium rate:
\begin{equation}\label{eq:lambdastar-V_b}
    \lambda_b^\ast(x) 
    = \1_{\{x>b\}}\frac{A_X V_{b}(x) - rV_{b}(x)}{g(x)}
    = \1_{\{x>b\}}\frac{A_X G(x) - rG(x)- 
    r\left(\frac{g(b)}{\psi'(b)}\psi(b)- G(b)\right)}{g(x)}\geq0.
\end{equation}
(The above expression will be non-negative by assumption, cf. Definition \ref{def:Admissible-g} below).

Now suppose $(X_t)$ is the state process controlled by one player using the control rate \eqref{eq:lambdastar-V_b}. Using It\^{o}'s formula it can be seen that the process $e^{-rt}V_b(X_t)$ is a (local) martingale on $0\leq t\leq\tau$.

Let us now consider the case that $G$ is not smooth. Our ansatz for the value function $V_b$ in this case is that the (local) martingale property of $e^{-rt}V_b\left(X_{t}\right)$ on $0\leq t\leq\tau$ is preserved, where $(X_t)$ is again the process resulting from one player using a candidate equilibrium strategy. Furthermore, our ansatz is that discontinuities in $g$ (on the interval $(b,\infty)$, where the players act) will correspond to skew points in the equilibrium strategies, so that $(X_t)_{t\geq0}$ is on the form:
\begin{equation*}
    dX_t = \left(\mu(X_t)-\lambda_b^\ast(X_t)\right)dt + 
    \sigma(X_t)dW_t - \sum_j c_j dL_t^{x_j}(X),
\end{equation*}
which we note is in line with our definition of admissible control strategies (Definition \ref{def:Markovian-admissible-controls}). (See the proof of Theorem \ref{Theorem_verification_g_NE_2} for a discussion of existence of solutions to SDEs of this kind). For a candidate threshold $b$, we will assume (Definition \ref{def:Admissible-g}) that the set of discontinuity points of $g$ which exceed $b$ (strictly), denoted by $\{l_1,\ldots,l_q\}$, is finite (for ease of notation we do not index these points by $b$). By It\^{o}'s formula (see e.g., \cite[Sec 3.5]{Peskir}) we have:
\begin{align}\label{eq:deriv_martingale}
\begin{split}
    e^{-rt}V_b(X_{t}) &= 
    V_b(x) + \int_{0}^{t}e^{-rs}
    \left(A_{X}V_b(X_{s})-\lambda_b^\ast(X_{s})V_b'(X_{s})-rV_b(X_{s})\right)
    \1_{\left\{X_{s}\notin\{l_{1},\ldots, l_{q}\}\right\}}ds\\
    &+\int_{0}^{t}e^{-rs}V_{b}'(X_{s})\sigma(X_{s})
    \1_{\left\{X_{s}\notin\{l_{1},\ldots, l_{q}\}\right\}}dW_{s}-
    \int_{0}^{t}e^{-rs}\sum_{j}\frac{g(x_j+)+g(x_j-)}{2}c_j dL^{x_{j}}_{s}(X)\\
    &+\int_{0}^{t}e^{-rs}
    \sum_{j}\frac{g(l_{j}+)-g(l_{j}-)}{2}dL^{l_{j}}_{s}(X),
    \enskip 0 \leq t\leq\tau.
\end{split}
\end{align}
Recall that we want the left hand side in \eqref{eq:deriv_martingale} to be a (local) martingale. With $\lambda_b^\ast$ defined in line with \eqref{eq:lambdastar-V_b} (on discontinuity points of $g$ we set $\lambda_b^\ast=0$, cf. \eqref{eq:lambda-inthmwithg} below) we obtain that the first integral in the right hand side of \eqref{eq:deriv_martingale} is zero (use $V_b'(x)=g(x),x\geq b$, $\lambda_b^\ast(x)=0,x\leq b$ and \eqref{eq:newVb} to see this). Moreover, in order for the local time integrals to vanish (to the end of obtaining a martingale), it is clear that we also need $x_j := l_j$ and for the skew point intensities to satisfy:
\begin{equation}\label{eq:def-c_j}
    c_j := \frac{g(l_{j}+)-g(l_{j}-)}{g(l_{j}+)+g(l_{j}-)},  \enskip j=1,\ldots,q.
\end{equation}
In order to formalize the motivational observations above properly we need to impose the following conditions on the profit rate function $g$ for any given $b$.
\begin{definition}[$b$-admissibility for profit rates $g$]\label{def:Admissible-g}
    A function $g:[0, \infty)\rightarrow\left(0,1\right]$ is said to be $b$-admissible for a fixed threshold $b\geq0$, if it is a RCLL function satisfying the following conditions:
    \begin{itemize}
        \item There exists a point $y_1\geq0$ with $g(y_1)=g(x)$ for $x>y_1$. 
        \item The set of discontinuity points of $g$ which are strictly greater than $b$, denoted by $\{l_1,\ldots,l_q\}$, is finite.
        \item $g$ is continuously differentiable on $\mathbb{R}\setminus\Theta$ where $\Theta$ is a finite set satisfying $\Theta\supseteq\{l_{1},\ldots, l_{q}, b\}$.
        \item Each $c_j$, see \eqref{eq:def-c_j}, satisfies $c_j \in (0, 1/2]$, i.e., $g(l_j +)\leq 3g(l_j -)$ and  $\Delta g(l_j):=g(l_j +)-g(l_j -)>0$.
        \item The function $(b,\infty)\setminus\Theta\ni x\mapsto\frac{g'(x)}{g(x)}$ is bounded.
    \end{itemize}
\end{definition}
Given that Definition \ref{def:Admissible-g} allows for non-smoothness we  modify the proposed extraction rate in \eqref{eq:lambdastar-V_b} to:
\begin{equation}\label{eq:lambda-inthmwithg}
    \lambda_b^\ast(x): = \frac{A_{X}V_b(x)-rV_b(x)}{g(x)}
    \1_{\{x \in (b, \infty)\setminus\Theta\}} = 
    \frac{A_X G(x) - rG(x)- r\left(\frac{g(b)}{\psi'(b)}\psi(b)- G(b)\right)}{g(x)}
    \1_{\{x \in (b, \infty)\setminus\Theta\}}.
\end{equation}

\begin{remark}\label{rem:generalize-to-big-g} 
    The assumption that $g$ takes values $(0,1]$ is made for ease of exposition. Indeed, this assumption can be generalized so that $g$ is allowed to take values in $(0,M]$ for an arbitrary constant $M \in (0,\infty)$. This is clear in view of the equilibrium definition and the expected reward functionals; to see this consider the standardization $g/M$.
\end{remark}

We are now ready to state equilibrium verification and existence results for the game in the present section. Their proofs are found in Sections \ref{subsec:proof_g_verification} and \ref{subsec:proof_g_existence}, respectively.  

\begin{theorem}[Verification]\label{Theorem_verification_g_NE_2}
    Suppose Assumptions \ref{Assumption_1} and \ref{Assumption_2} hold. Consider a $b$-admissible profit rate $g$, for a fixed $b\geq0$. Suppose the following conditions hold:
    \begin{align}
        \label{NE:cond2-g}
        V_b\text{ defined in \eqref{eq:newVb} satisfies } V_b'(x)\geq g(x),x \in [0,\infty)\setminus\Theta,\\
        \lambda_b^\ast\text{ defined in \eqref{eq:lambda-inthmwithg} satisfies }
        \lambda_{b}^{\ast}(x)\geq0,x\geq0.\label{NE:cond1-g}
    \end{align}
    Then the corresponding game has a (symmetric) global Markovian Nash equilibrium with value function $V_{b}$ and equilibrium strategies (cf. Definition \ref{def:Markovian-admissible-controls}):
    \begin{equation*}
        D^{1} = D^{2} = 
        \left\{\lambda_b^\ast, \{l_{1}, \ldots, l_q\}, \{c_{1}, \ldots, c_q\}, 
        \emptyset, \emptyset\right\}.
    \end{equation*}
    (Recall that $\{l_1,\ldots,l_q\}$ are the  discontinuity points of $g$ which exceed $b$ (strictly), and that $c_i,i=1,...,q$ are defined in \eqref{eq:def-c_j}.)
\end{theorem}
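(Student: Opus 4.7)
The plan is to verify the equilibrium property via an It\^o--Tanaka argument applied to $e^{-rt}V_b(X_t)$, first identifying $J_i(x,D^{1,*},D^{2,*})=V_b(x)$ and then verifying the one-sided Nash inequalities. As a preliminary, I would argue that the two-player controlled SDE \eqref{SDE} admits a unique strong solution up to absorption at zero: under Assumption \ref{Assumption_1} together with the boundedness of $g'/g$ from Definition \ref{def:Admissible-g}, the drift $\mu-2\lambda_b^\ast$ is locally Lipschitz off $\Theta$, and the aggregated skew coefficients $2c_j\in(0,1]$ place us in the scope of the existence theory for SDEs with local-time drifts cited in the excerpt (e.g., \cite{bass2005one, le2006one, Lejay2006}); the rewards are therefore defined through \eqref{g-function-payouts2} and not by the $-\infty$ convention.

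For the equilibrium value, I would apply It\^o--Tanaka to $e^{-rt}V_b(X_t)$ exactly in the spirit of the motivating identity \eqref{eq:deriv_martingale}, but now with the two-player aggregated drift $\mu-2\lambda_b^\ast$ and aggregated skew push $2c_j\,dL^{l_j}$. On $(b,\infty)\setminus\Theta$, the HJB equation in \eqref{HJB:secind-thing} collapses the drift integrand to $-\lambda_b^\ast g$, while on $[0,b]$ the integrand vanishes because $\lambda_b^\ast\equiv 0$ and $\psi$ solves \eqref{eq:ode-for-psi}. At each $l_j$ the residual local-time coefficient equals $\Delta g(l_j)/2-c_j(g(l_j+)+g(l_j-))=-\Delta g(l_j)/2$ by the calibration \eqref{eq:def-c_j}, and this is precisely the quantity $c_j(g(l_j+)+g(l_j-))/2$ that each player collects per unit $dL^{l_j}$ under \eqref{g-function-payouts2}. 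A localization argument, combined with $V_b(0)=0$ at absorption, yields $V_b(x)=J_i(x,D^{1,*},D^{2,*})$.

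For the deviation step, suppose player $1$ switches to an arbitrary admissible $D=\{\lambda^1,\{x_k^1\},\{c_k^1\},\mathcal{J}^1,B^1\}\in\mathbb{L}$ while player $2$ stays at $D^{2,*}$. Since $B^{2,*}=\emptyset$, simultaneous jumps do not arise, and the aggregate skew push at each $l_j$ is $c_j + c_k^1$ only if player $1$ chooses $x_k^1=l_j$. Applying It\^o--Tanaka to $e^{-rt}V_b(X_t)$ under the new dynamics, the HJB inequality in \eqref{HJB:secind-thing} bounds the drift integrand above by $-\lambda^1 g$; condition \eqref{NE:cond2-g} bounds each local-time contribution at a skew point $x_k^1$ above by $-c_k^1(g(x_k^1+)+g(x_k^1-))/2\cdot dL^{x_k^1}$, invoking \eqref{eq:def-c_j} once more whenever $x_k^1$ coincides with some $l_j$; and the inequality $V_b'\geq g$, integrated over any jump of player $1$, gives $V_b(X_{t-})-V_b(X_{t-}-\Delta D^1_t)\geq G(X_{t-})-G(X_{t-}-\Delta D^1_t)$, dominating the corresponding jump reward. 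Summing, localizing, and using $V_b(X_\tau)=0$ yields $V_b(x)\geq J_1(x,D,D^{2,*})$; the inequality for player $2$ follows by symmetry.

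The main obstacle I anticipate is the bookkeeping at the skew points $l_j$ when player $1$ deviates by placing her own skew push at some $x_k^1=l_j$: It\^o--Tanaka then produces a single aggregated local-time term with coefficient governed by $c_j+c_k^1$, whereas the reward convention \eqref{g-function-payouts2} credits the two players separately via their continuous controls. The calibration $c_j(g(l_j+)+g(l_j-))=\Delta g(l_j)$ from \eqref{eq:def-c_j} is precisely what makes the marginal $V_b$-cost of an additional unit of skew intensity at $l_j$ equal to the marginal skew reward there, so that such a deviation is neutral. A secondary technical point is the localization needed to pass from local-martingality to the expectation identity; I would control the growth of $V_b'\sigma$ along the stopped path using Assumption \ref{Assumption_2} together with the bounded $g'/g$ property of Definition \ref{def:Admissible-g}.
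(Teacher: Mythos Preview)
Your approach is the same as the paper's: It\^o--Tanaka applied to $e^{-rt}V_b(X_t)$, with the calibration \eqref{eq:def-c_j} making the local-time bookkeeping work out at the $l_j$, condition \eqref{NE:cond2-g} supplying the one-sided bounds for the deviating player's rate, skew, and jump contributions, and monotone convergence to pass to the limit. Your anticipation of the skew-point coincidence $x_k^1=l_j$ is correct and matches exactly how the paper handles $dD^c_s$ through the averaged derivative $\tfrac12\bigl(V_b'(X_{s-}+)+V_b'(X_{s-}-)\bigr)$.

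The one place where your sketch is genuinely incomplete is what you call a ``secondary technical point''. In the equality $J_1(x;D^{1,*},D^{2,*})=V_b(x)$ you need
\[
\lim_{n\to\infty}\E_x\!\left[e^{-r(\tau_n\wedge\tau\wedge n)}V_b\bigl(X_{\tau_n\wedge\tau\wedge n}\bigr)\right]=0,
\]
and this is not a matter of controlling $V_b'\sigma$ (that only ensures the stopped stochastic integral is a martingale, which localization already gives you). The issue is that the two-player controlled process may run off to $+\infty$, and you must show it does so slowly enough relative to the discount. The paper handles this by first using $V_b(y)\le Ay$ (bounded derivative), then computing the two-player drift $\mu-2\lambda_b^\ast$ explicitly for large $y$ to see it equals $2ry+\text{const}-\mu(y)$, and finally invoking a comparison principle with an auxiliary diffusion $\xi$ whose drift satisfies the hypothesis of Lemma~\ref{Lemma_that_was_previously_an_assumption} (namely that $y\mapsto\widehat\mu(y)-(r-c)y$ is eventually decreasing). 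That lemma gives $n/\widehat\psi(n)\to0$, hence $n\,\E_x[e^{-r\tau_n^\xi}]\to0$, which dominates the boundary term. This is precisely where Assumption~\ref{Assumption_2} enters the verification argument, and it is a substantive step rather than routine growth control; your write-up should make this comparison-with-$\xi$ mechanism explicit.
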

Note that the equilibrium strategies of Theorem \ref{Theorem_verification_g_NE_2} correspond to each player controlling with a combination of a classical state-dependent rate and skew points (the associated SDE is given in in \eqref{SDE-g-function-proof}, in the proof of Theorem \ref{Theorem_verification_g_NE_2}, in Section \ref{subsec:proof_g_verification}, below). 

\begin{theorem}[Equilibrium existence]\label{Theorem_existence_g_NE_1}
    Suppose Assumptions \ref{Assumption_1} and \ref{Assumption_2} hold. Let $g$  be a $b'$-admissible function, for some $b'\geq0$ (which means that $g$ is $b$-admissible for any $b\geq b'$).
    \begin{enumerate}[(I)]
        \item Suppose additionally that $g(y_1)\geq g(x), x \geq 0$ (for $y_1$ given in Definition \ref{def:Admissible-g}). Then, there exists a constant  $\underline{b}\geq b'$ such that conditions \eqref{NE:cond2-g}--\eqref{NE:cond1-g} hold (and we have a global Markovian Nash equilibrium as in Theorem \ref{Theorem_verification_g_NE_2}), for any $b\geq \underline{b}$. 
        \item Consider a fixed $b \geq b'$. If $x\mapsto\mu(x)-rx$ is non-decreasing on $[0, b)$ with $\mu(0)\geq0$ and either $g(b)=1$ or $g$ is non-decreasing on $[0, b)$, then \eqref{NE:cond2-g} holds.
        Hence, if also \eqref{NE:cond1-g} holds, then we have a global Markovian Nash equilibrium as in Theorem \ref{Theorem_verification_g_NE_2} for such $b$. 
        \item Suppose additionally that $g$ is non-decreasing. If $x\mapsto\mu(x)-rx$ is non-decreasing on $[0, \infty)$ with $\mu(0)\geq0$
        then \eqref{NE:cond2-g} and \eqref{NE:cond1-g} hold (and we thus have a global Markovian Nash equilibrium as in Theorem \ref{Theorem_verification_g_NE_2}), for any $b \geq b'$. 
    \end{enumerate}
\end{theorem}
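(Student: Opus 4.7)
My plan is to analyse the fundamental increasing solution $\psi$ through the auxiliary function $\phi(x):=r\psi(x)-\mu(x)\psi'(x)$, so that the ODE \eqref{eq:ode-for-psi} gives $\psi''(x)=2\phi(x)/\sigma^{2}(x)$. Differentiating $\phi$ and re-substituting $\psi''$ produces the first-order linear ODE
\begin{equation*}
    \phi'(x)+\frac{2\mu(x)}{\sigma^{2}(x)}\phi(x)=(r-\mu'(x))\psi'(x).
\end{equation*}
The integrating factor $e^{I(x)}$ with $I(x):=\int_{0}^{x}2\mu(u)/\sigma^{2}(u)\,du$ then yields the key structural lemma: whenever $\mu'\geq r$ a.e.\ on $[x_0,x_1]$ and $\phi(x_0)\leq 0$, we have $\phi\leq 0$ throughout, so $\psi''\leq 0$ and $\psi'$ is non-increasing on $[x_0,x_1]$. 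This single observation drives all three parts.

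For part~(II), the hypothesis that $\mu(x)-rx$ is non-decreasing on $[0,b)$ with $\mu(0)\geq 0$ supplies both $\mu'\geq r$ a.e.\ on $[0,b)$ and $\phi(0)=-\mu(0)\leq 0$, so $\psi'(x)\geq\psi'(b)$ on $[0,b]$. Condition \eqref{NE:cond2-g} then reduces to $g(b)\psi'(x)\geq g(x)\psi'(b)$, which is immediate when $g(b)=1$ (using $g\leq 1$) and, when $g$ is non-decreasing on $[0,b)$, follows from $g(x)\leq g(b-)\leq g(b)$ (the last inequality is consistent with $b$-admissibility, since the discontinuity points $l_j$ lie strictly above $b$).

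For part~(III), condition \eqref{NE:cond2-g} inherits from part~(II). For \eqref{NE:cond1-g}, note that $V_b'(x)=g(x)$ on $(b,\infty)$ gives $g(x)\lambda_b^\ast(x)=H(x):=A_X V_b(x)-rV_b(x)$ on $(b,\infty)\setminus\Theta$. Integration by parts applied to $r\int_b^x g(u)\,du$, accounting for the upward jumps of $g$ at $l_1,\ldots,l_q$, yields
\begin{equation*}
    H(x)=\tfrac{\sigma^{2}(x)}{2}g'(x)+(\mu(x)-rx)g(x)+r\!\int_b^x\! u\,g'(u)\,du+r\!\!\sum_{l_j\in(b,x]}\! l_j\Delta g(l_j)+rbg(b)-rV_b(b).
\end{equation*}
Using $g,g'\geq 0$, $\Delta g(l_j)>0$, $\mu'\geq r$, and $\mu(0)\geq 0$, every $x$-dependent piece is non-negative or non-decreasing in $x$; hence $H(x)\geq\mu(b)g(b)-rV_b(b)=-g(b)\sigma^{2}(b)\psi''(b)/(2\psi'(b))\geq 0$, where the identity uses the ODE at $b$ and the final inequality uses $\psi''(b)\leq 0$ from part~(II).

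For part~(I), I will run an asymptotic argument in $b$. Assumption~\ref{Assumption_2} gives $\mu'(x)\geq r+c$ on $[\kappa,\infty)$, so the right-hand side of the auxiliary ODE is bounded above by $-c\psi'(x)<0$ there; combined with $I(x)\to\infty$, this forces $e^{I(x)}\phi(x)\to-\infty$, so there is $x^\ast\geq\kappa$ with $\psi'$ non-increasing on $[x^\ast,\infty)$. A further argument shows $\psi'(x)\to 0$ as $x\to\infty$ (otherwise $\psi''=2\phi/\sigma^2$ would blow up, contradicting $\psi'>0$). Consequently for $b$ large enough ($b>\max(y_1,x^\ast)$), $\psi'$ attains its minimum on $[0,b]$ at $b$; together with $g(b)=g(y_1)\geq g(x)$ (for $b\geq y_1$) this gives \eqref{NE:cond2-g}. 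For \eqref{NE:cond1-g}, since $g'\equiv 0$ on $(b,\infty)$ when $b>y_1$, $H(x)=g(y_1)(\mu(x)-rx+rb)-rV_b(b)$ is non-decreasing in $x$ and $H(b+)=-g(y_1)\sigma^2(b)\psi''(b)/(2\psi'(b))\geq 0$. The hardest step I anticipate is establishing the decay $\psi'(x)\to 0$ rigorously from Assumptions~\ref{Assumption_1}--\ref{Assumption_2} alone, followed by the integration-by-parts bookkeeping in part~(III) across the jump set $\Theta$, where $V_b$ fails to be $C^{1}$.
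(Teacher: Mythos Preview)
Your proposal is correct and follows the same high-level architecture as the paper: reduce \eqref{NE:cond2-g} to the inequality $\psi'(x)\geq\psi'(b)$ on $[0,b]$ via concavity of $\psi$, and reduce \eqref{NE:cond1-g} to the identity $\mu(b)-r\psi(b)/\psi'(b)=-\tfrac{\sigma^{2}(b)}{2}\psi''(b)/\psi'(b)\geq 0$. The genuine methodological difference lies in how you obtain the structural facts about $\psi$. The paper isolates these into two appendix lemmas (Lemma~\ref{Lemma_that_I_don't_know_what_to_call} for eventual concavity on $[\kappa,\infty)$ and $\psi'\to 0$; Lemma~\ref{Cor:concave-psi} for concavity on $[0,b)$), each proved by ad hoc contradiction arguments that work directly with inequalities and deliberately avoid differentiating $\mu$. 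Your route through the auxiliary function $\phi=r\psi-\mu\psi'$ and its first-order linear ODE with integrating factor $e^{I}$ is a different and more unified mechanism: one computation handles all three parts, at the modest cost of invoking $\mu'$ a.e.\ (harmless here since $\mu$ is Lipschitz). Your integration-by-parts decomposition in Part~(III) is more elaborate than necessary; the paper simply uses $G(x)-G(b)\leq g(x)(x-b)$ and $\mu(x)\geq\mu(b)+r(x-b)$ to reach the same terminal bound. Your sketch ``$\psi''$ would blow up'' for $\psi'\to 0$ is slightly imprecise (it is $\int\psi''$ that diverges, not $\psi''$ itself), but the underlying estimate matches Lemma~\ref{Lemma_that_I_don't_know_what_to_call}(III). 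One small slip: in Part~(II) your justification that $g(b-)\leq g(b)$ ``since the discontinuity points $l_j$ lie strictly above $b$'' is circular—those $l_j$ are \emph{defined} as the discontinuities strictly above $b$, so this says nothing about a jump at $b$ itself; the paper glosses over the same step.
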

The following result is analogous to Corollary \ref{cor:value_goes_to_inf}. 

\begin{corollary}\label{cor:value_goes_to_inf_g}
    Suppose all assumption of Theorem \ref{Theorem_existence_g_NE_1}(I) hold. Then, there exists a constant $\underline{b}$ such that we have a global Markovian Nash equilibrium for each $b \geq \underline{b}$ with an equilibrium value $V_b(x)$ that is strictly increasing in $b$ on $(\underline{b},\infty)$ for fixed $x>0$, with:
        \begin{equation*}
            \lim_{b\rightarrow\infty}V_b(x)=\infty.
        \end{equation*}
\end{corollary}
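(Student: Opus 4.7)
The plan is to parallel the proof of Corollary \ref{cor:value_goes_to_inf}, with the extra observation that, under the hypothesis of Theorem \ref{Theorem_existence_g_NE_1}(I), the profit rate $g$ is eventually constant, so that the $b$-dependence of the equilibrium value for large $b$ comes entirely through the increasing fundamental solution $\psi$.

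First I would take $\underline{b}_{0}$ provided by Theorem \ref{Theorem_existence_g_NE_1}(I), beyond which the symmetric equilibrium with value \eqref{eq:newVb} and rate \eqref{eq:lambda-inthmwithg} is in force. By Definition \ref{def:Admissible-g} there exists $y_{1}$ with $g(x)=g(y_{1})$ for $x>y_{1}$, and combined with the extra assumption $g(y_{1})\geq g(x),\,x\geq0$, this gives $g(b)=g(y_{1})=:g^{\ast}>0$ for every $b\geq y_{1}$. Next I would invoke Lemma \ref{Lemma_that_I_don't_know_what_to_call}(II)--(III), exactly as in the proof of Corollary \ref{cor:value_goes_to_inf}, to obtain a constant $b_{2}\geq0$ such that $\psi'$ is strictly decreasing on $(b_{2},\infty)$ (equivalently $\psi''<0$ there) with $\lim_{b\to\infty}\psi'(b)=0$. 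Setting $\underline{b}:=\max(\underline{b}_{0},y_{1},b_{2})$ simultaneously secures the equilibrium for every $b\geq\underline{b}$ and the properties $g(b)=g^{\ast}$ and $\psi'(b)\searrow0$ on $(\underline{b},\infty)$.

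For the monotonicity claim, the strategy is to fix $x>0$ and split $b\in(\underline{b},\infty)$ according to whether $b>x$ or $\underline{b}<b\leq x$. In the former regime the first branch of \eqref{eq:newVb} gives $V_{b}(x)=g^{\ast}\psi(x)/\psi'(b)$, which is strictly increasing in $b$ since $\psi(x)>0$ and $\psi'$ is strictly decreasing. In the latter regime the second branch reads $V_{b}(x)=g^{\ast}\psi(b)/\psi'(b)+G(x)-G(b)$; routine differentiation in $b$, using $g(b)=g^{\ast}$ and $\psi''(b)<0$, reduces strict monotonicity here to the positivity of $-g^{\ast}\psi(b)\psi''(b)/\psi'(b)^{2}$, which is clear. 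Continuity of $V_{b}(x)$ at $b=x$ glues the two regions, and the divergence $V_{b}(x)\to\infty$ as $b\to\infty$ follows at once from the former-regime expression together with $\psi'(b)\to0$.

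The main (and essentially only) obstacle is the asymptotic behaviour of $\psi'$ at infinity, which is precisely what Lemma \ref{Lemma_that_I_don't_know_what_to_call} delivers via Assumption \ref{Assumption_2}; the work specific to the state-dependent setting thus amounts to observing that the eventual constancy of $g$ reduces the analysis essentially to the one already performed in Corollary \ref{cor:value_goes_to_inf}.
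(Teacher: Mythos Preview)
Your argument is correct and follows essentially the same route as the paper: you select $\underline{b}$ large enough so that $g$ is constant and $\psi''<0$ beyond it (via Lemma \ref{Lemma_that_I_don't_know_what_to_call}(II)--(III)), then differentiate $V_b(x)$ in $b$ to get positivity. The only cosmetic difference is that the paper records the derivative in the unified form $\partial V_b(x)/\partial b=-g(b)\psi(x\wedge b)\psi''(b)/\psi'(b)^2$ rather than splitting into the cases $b>x$ and $b\le x$, but your case split produces exactly these two expressions and the gluing via continuity at $b=x$ is sound.
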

\begin{proof}
    Use Theorem \ref{Theorem_existence_g_NE_1}(I) to see that we can select $\underline{b}$ with $\underline{b}>b_{2}, b', y_{1}$ (see Theorem \ref{Theorem_existence_g_NE_1} and Lemma \ref{Lemma_that_I_don't_know_what_to_call} for definitions of these constants) so that we have a global Markovian Nash equilibrium for each  $b \geq \underline{b}$.  Now use $g'(b)=0, b >y_1$ and Lemma \ref{Lemma_that_I_don't_know_what_to_call}(II) to see, using some calculations, for any fixed $x>0$, and for $ b\geq \underline{b}$ that the equilibrium value $V_b$ (given by \eqref{eq:newVb}) satisfies:
    \begin{equation*}
            \frac{\partial V_{b}(x)}{\partial b} = -\frac{g(b)\psi(x\land b)\psi''(b)}{\psi'(b)^{2}}>0,
    \end{equation*}
    which implies that $b\mapsto V_b(x)$ is strictly increasing on the relevant interval. The last result follows from the form of $V_b$ in \eqref{eq:newVb} and Lemma \ref{Lemma_that_I_don't_know_what_to_call}(III). 
\end{proof}
\begin{remark}\label{Remark:Two-equilibria}
    It is worth noting that in the equilibria described in Theorem \ref{Theorem_verification_g_NE_2} no player chooses a strategy with a skew point at the point $b$, even if the corresponding profit rate function $g$ has a discontinuity at this point. However, there is another Nash equilibrium where the players use skew points at $b$ in the case that $b$ is a discontinuity point of $g$. In particular, consider the value function (cf. \eqref{eq:newVb}): 
        \begin{equation*}
            V_{b-}(x) := \begin{cases}
                \frac{g(b-)}{\psi'(b)}\psi(x) & \text{ for: } 0 \leq x<b\\
                \frac{g(b-)}{\psi'(b)}\psi(b) + G(x) - G(b) & \text{ for: }x\geq b,
            \end{cases}
        \end{equation*}
    and the symmetric pair of strategies given by the extraction rate  $\displaystyle x\mapsto \frac{A_{X}V_{b-}(x)-rV_{b-}(x)}{g(x)}\1_{\{x \in (b, \infty)\setminus\Theta\}}$ and skew points at discontinuity points of $g$ which are greater than or equal to $b$ with intensities determined by equation \eqref{eq:def-c_j}. Then, we have corresponding verification and existence results analogous to those in the present section with proofs analogous to those in Sections \ref{subsec:proof_g_verification}--\ref{subsec:proof_g_existence} with the equilibrium value function $V_{b-}$.
\end{remark}

\subsection{Proof of Theorem \ref{Theorem_verification_g_NE_2}}\label{subsec:proof_g_verification}
It is immediately verified that the strategies $D^1$ and $D^2$ are admissible (i.e., belonging to $\mathbb{L}$). It suffices to show that one of the players, say player $1$, does not gain anything by deviating from $(D^1,D^2)$, for any given $X_{0-}=x>0$ i.e.:
\begin{equation}\label{claim1}
    J_1\left(x;D, D^2\right)\leq V_b(x),
\end{equation}
for any $D\in \mathbb{L}$, and that the expected reward corresponding to the strategy tuple $(D^1, D^2)$ satisfies: 
\begin{equation}\label{claim2}
    J_1\left(x;D^1, D^2\right)=V_b(x),
\end{equation}
where $V_b$ is given by \eqref{eq:newVb}, cf. Definition \ref{def:Markov-NE-for-g} (the case $X_{0-}=0$ is trivial); since the same results for the other player can be proved in the same way. 
    
Let us prove the first claim (cf. \eqref{claim1}). Consider an arbitrary $X_{0-}=x>0$ and suppose that (only) player $1$ deviates from $(D^1, D^2)$ by using an arbitrary admissible control strategy $D\in \mathbb{L}$. The associated SDE is:
\begin{equation}\label{eq:g-proof-controlled-sde-deviation}
    dX_{t} = \left(\mu(X_{t})-\lambda_b^\ast(X_{t})\right)dt + \sigma(X_{t})dW_{t}-
    \sum_{j}c_{j}dL_{t}^{l_{j}}(X) - dD_t,\enskip X_{0-}=x , \enskip 0 \leq t \leq \tau.
\end{equation}
Without loss of generality assume that the controlled SDE \eqref{eq:g-proof-controlled-sde-deviation} has a unique strong solution (since if the control strategy $D$ is such that there is no unique strong solution then the corresponding expected payoff for player $1$ is $J_{1}(x,D, D^2)=-\infty$, and it is hence clear that the desired conclusion holds).
    
Using $\tau_{n}:=\inf\{t\geq0:X_{t}\geq n\}$ and a generalized It\^{o} formula (cf. e.g., \cite[Sec. 3.5]{Peskir} and \cite[Theorem 3.1]{peskir2007change}) together with the smoothness properties of $V_b$ (see \eqref{eq:newVb} and Definition \ref{def:Admissible-g}), and \eqref{eq:g-proof-controlled-sde-deviation} we have:
\begin{align*}
    e^{-r(\tau_{n}\land\tau\land n)}V_b(X_{(\tau_{n}\land\tau\land n)-})
    &=V_b(x) +\int_{0}^{\tau\land\tau_{n}\land n}e^{-rs}
    V'_{b}(X_{s})\sigma(X_{s})\1_{\{X_s \notin  \Theta\}}dW_{s}\\
    &+\int_{0}^{\tau\land\tau_{n}\land n}e^{-rs}
    \left(\frac{1}{2}\sigma^2(X_s)V''_{b}(X_{s})+
    (\mu(X_s)-\lambda_b^\ast(X_s))V'_{b}(X_{s}) -rV_b(X_s)\right)
    \1_{\{X_s \notin\Theta\}}ds\\
    &+ \int_{0}^{\tau\land\tau_{n}\land n}e^{-rs}
    \sum_{j}\frac{V'_{b}(l_j+)-V'_{b}(l_j-)}{2}dL^{l_{j}}_{s}(X)\\
    & -\int_{0}^{\tau\land\tau_{n}\land n}e^{-rs}\sum_{j}
    \frac{V'_{b}(l_j+)+V'_{b}(l_j-)}{2}c_{j}dL_{s}^{l_{j}}(X)\\
    &-\int_{0}^{\tau\land\tau_{n}\land n}e^{-rs}
    \frac{V'_{b}(X_{s-}+)+V'_{b}(X_{s-}-)}{2}dD_s^c\\
    & - \sum_{0\leq s<\tau_n\land\tau\land n}e^{-rs}
    \left(V_{b}(X_{s-})-V_{b}(X_{s-} - \Delta D_s)\right).
\end{align*}
Now use \eqref{eq:newVb}, \eqref{eq:def-c_j}, and that $b<l_j$ to see that:  
\begin{equation}\label{eq:proof-skew-point-calculation1}
    \frac{V_b'(l_{j}+)-V_b'(l_{j}-)}{2}-
    \frac{V_b'(l_{j}+)+V_b'(l_{j}-)}{2}c_{j}= 0,
\end{equation}
which implies the integrals with respect to local time above sum up to zero and vanish. Now use \eqref{eq:newVb} and \eqref{eq:lambda-inthmwithg} to find that:
\begin{align}\label{eq:proof-hjb-help}
\begin{split}
    \left(\frac{1}{2}\sigma^2(y)V''_{b}(y)+
    (\mu(y)-\lambda_b^\ast(y))V'_{b}(y) -rV_b(y)
    \right)\1_{\{y \notin  \Theta\}}
    &= \left(A_XV_b(y) -rV_b(y)- \lambda_b^\ast(y)V'_{b}(y)\right)\1_{\{y \notin  \Theta\}}\\
    = \left(A_XV_b(y) -rV_b(y)- \lambda_b^\ast(y)g(y)
    \right)\1_{\{y \notin  \Theta\}}&=0. 
\end{split}
\end{align}
Using these observations, taking expectation in the It\^{o} formula expression above and using the condition \eqref{NE:cond2-g} (as well as \eqref{eq:newVb}) we obtain:
\begin{align}\label{eq:ito-in-the-proof-g}
    V_b(x) &=
    \mathbb{E}_x\left[e^{-r(\tau_{n}\land\tau\land n)}
    V_b(X_{(\tau_{n}\land\tau\land n)-})\right]+
    \E_x\left[\int_{0}^{\tau\land\tau_{n}\land n}e^{-rs}
    \frac{V'_{b}(X_{s-}+)+V'_{b}(X_{s-}-)}{2}dD_s^{c}\right]\nonumber\\
    &+\E_x\left[\sum_{0\leq s<\tau_n\land\tau\land n}e^{-rs}
    \left(V_{b}(X_{s-})-V_{b}(X_{s-} - \Delta D_s)\right)\right]\nonumber\\
    &\geq\mathbb{E}_x\left[e^{-r(\tau_{n}\land\tau\land n)}
    V_b(X_{(\tau_{n}\land\tau\land n)-})\right]+
    \E_x\left[\int_{0}^{\tau\land\tau_{n}\land n}e^{-rs}
    \frac{g(X_{s-}+)+g(X_{s-}-)}{2}dD_s^c\right]\nonumber\\
    &+\E_x\left[\sum_{0\leq s<\tau_n\land\tau\land n}e^{-rs}
    \int_{X_{s-}-\Delta D_s}^{X_{s-}}g(u)du\right]\nonumber\\
    &\geq\mathbb{E}_x\left[e^{-r(\tau_{n}\land\tau\land n)}
    \left(G(X_{(\tau_{n}\land\tau\land n)-}) - G\left(\left(X_{(\tau_{n}\land\tau\land n)-} - \Delta D_{\tau_{n}\land\tau\land n}\right)\lor0\right)\right)\right]\nonumber\\
    &+\E_x\left[\int_{0}^{\tau\land\tau_{n}\land n}
    e^{-rs}\frac{g(X_{s-})+g(X_{s-}-)}{2}dD_s^c +
    \sum_{0\leq s<\tau_n\land\tau\land n}e^{-rs}
    \left(G(X_{s-})-G((X_{s-} - \Delta D_s)\lor0)\right)\right]\\
    &=\E_x\left[\int_{0}^{\tau\land\tau_{n}\land n}
    e^{-rs}\frac{g(X_{s-})+g(X_{s-}-)}{2}dD_s^c +
    \sum_{0\leq s\leq\tau_n\land\tau\land n}e^{-rs}
    \left(G(X_{s-})-G((X_{s-} - \Delta D_s)\lor0)\right)\right]\nonumber\\
    &\overset{n\rightarrow\infty}{\longrightarrow}
    \E_x\left[\int_{0}^{\tau}e^{-rs}\frac{g(X_{s-})+g(X_{s-}-)}{2}dD_s^c +
    \sum_{0\leq s\leq\tau}e^{-rs}
    \left(G(X_{s-})-G((X_{s-} - \Delta D_s)\vee 0)\right)\right]\nonumber\\
    & = J_1\left(x; D, D^2\right),
    \nonumber
\end{align}
where the second to last step holds due to monotone convergence theorem and $\tau_{n}\land\tau\land n\overset{n\rightarrow\infty}{\longrightarrow}\tau$ a.s. and the property $\Delta D_t\leq X_{t-}$ and the last step holds by definition (use \eqref{g-function-payouts}--\eqref{g-function-payouts2} and the observation that $\Delta D^2_t=0$). We have thus proved the first claim.
    
Let us prove the second claim (cf. \eqref{claim2}). Consider the case that no player deviates from $(D^1,D^2)$. The associated SDE:
\begin{equation}\label{SDE-g-function-proof}
    dX_{t} = \left(\mu(X_{t})-2\lambda_b^\ast(X_{t})\right)dt + \sigma(X_{t})dW_{t}-
    \sum_{j}2c_{j}dL_{t}^{l_{j}}(X),\enskip X_{0-}=x, \enskip 
    0\leq t \leq \tau, 
\end{equation}
has a unique strong solution, which can be seen as follows: First note that strong unique existence until $\tau_n=\{t\geq0:X_{t}\geq n\}$ for any $n$ can be verified (see e.g., \cite[Theorem 4.3, Corollary 2.5 and Remark 2.6]{bass2005one}  and the occupation time formula; note that $\lambda^\ast_b$ is locally bounded by Definition \ref{def:Admissible-g}).  Now note that the dynamics of the SDE on $(n,\infty)$, for $n>y_1,b,l_1,\ldots,l_q$, have no skew points and also Lipschitz continuos coefficients (cf. e.g., Definition \ref{def:Admissible-g}, Assumption \ref{Assumption_1} and \eqref{eq:lambda-inthmwithg}). Now standard arguments of piecing together solutions on suitable subintervals yield the desired conclusion.

From now on in the proof $(X_t)$ denotes the solution to the SDE \eqref{SDE-g-function-proof}. Using the generalized It\^{o} formula as above we obtain (using \eqref{SDE-g-function-proof}):
\begin{align}\label{ito-for-other-proof}
    \nonumber
    e^{-r(\tau_{n}\land\tau\land n)}V_b(X_{\tau_{n}\land\tau\land n}) &= 
    V_b(x) +\int_{0}^{\tau_{n}\land\tau\land n}
    e^{-rs}V'_{b}(X_{s})\sigma(X_{s})\1_{\{X_{s}\notin\Theta\}}dW_{s}\\
    \nonumber
    &+\int_{0}^{\tau\land\tau_{n}\land n}e^{-rs}
    \left(\frac{1}{2}\sigma^2(X_s)V''_{b}(X_{s})+
    (\mu(X_s)-2\lambda_b^\ast(X_s))V'_{b}(X_{s}) -rV_b(X_s)\right)
    \1_{\{X_s \notin  \Theta\}}ds\\
    &+\int_{0}^{\tau_{n}\land\tau\land n}e^{-rs}
    \sum_{j}\frac{V'_{b}(l_j+)-V'_{b}(l_j-)}{2}dL^{l_{j}}_{s}(X)\\
    \nonumber
    &-\int_{0}^{\tau_{n}\land\tau\land n} e^{-rs}
    \sum_{j}\frac{V'_{b}(l_j+)+V'_{b}(l_j-)}{2}2c_j dL^{l_{j}}_{s}(X).
\end{align}
Now note (as in \eqref{eq:proof-skew-point-calculation1}) that \eqref{eq:def-c_j} is equivalent to:
\begin{equation}\label{eq:proof-skew-point-calculation2}
    \frac{V'_b(l_j+)-V'_b(l_j-)}{2} - \left(V'_{b}(l_j+)+V'_{b}(l_j-)\right)c_j=
    -c_{j}\frac{g(l_{j}+)+g(l_{j}-)}{2},
\end{equation}
and that (cf. \eqref{eq:newVb}, \eqref{eq:lambdastar-V_b} and \eqref{eq:proof-hjb-help}):
\begin{align*}
    \left(\frac{1}{2}\sigma^2(y)V''_{b}(y)+
    (\mu(y)-2\lambda_b^\ast(y))V'_{b}(y) -rV_b(y)
    \right)\1_{\{y \notin  \Theta\}}&= 
    -\lambda_b^\ast(y)V'_{b}(y) \1_{\{y \notin  \Theta\}}\\
    &= -\lambda_b^\ast(y))g(y) \1_{\{y \notin  \Theta\}}. 
\end{align*}
Using these observations and taking expectation on both sides in \eqref{ito-for-other-proof} we obtain:
\begin{align*}
    V_b(x) 
    &= \E_x\left[e^{-r(\tau_{n}\land\tau\land n)}
    V_b(X_{\tau_{n}\land\tau\land n})\right]+
    \E_x\left[\int_{0}^{\tau_{n}\land\tau\land n}
    e^{-rs}\lambda_b^\ast(X_{s})g(X_{s})ds\right]\\
    &+\E_x\left[\int_{0}^{\tau_{n}\land\tau\land n}e^{-rs}
    \sum_{j}c_j\frac{g(l_{j}+)+g(l_{j}-)}{2}dL^{l_{j}}_{s}(X)\right].
\end{align*}
Hence, proving that:
\begin{align}\label{proofhelp}
    \lim_{n\rightarrow\infty}\E_x\left[e^{-r(\tau_{n}\land\tau\land n)}
    V_b(X_{\tau_{n}\land\tau\land n})\right]=0,
\end{align}
proves also that $V_b(x)= J_{1}\left(x; D^{1}, D^{2}\right)$ (i.e., the second claim); to see this use monotone convergence and that the controlled process $(X_t)$, given by \eqref{SDE-g-function-proof}, has continuos paths and the control process of player $1$ (as well as player $2$) is
\begin{align*}
    D^1_t = \int_{0}^{t} \lambda_b^\ast(X_{s}) ds+ \sum_{j}c_{j} L^{l_{j}}_{s}(X), 
\end{align*}
so that:
\begin{equation*}
    \int_{0}^{\tau}e^{-rs}\lambda_b^\ast(X_{s})g(X_{s})ds+
    \int_{0}^{\tau}e^{-rs}\sum_{j}c_{j}\frac{g(l_{j})+g(l_{j}-)}{2}dL^{l_{j}}_{s}(X) =
    \int_0^\tau e^{-rt}g\left(X^{D^1,D^2}_{t-}\right)\circ d D_{t}^1.
\end{equation*}
It remains to prove that \eqref{proofhelp} holds and for this we note that:
\begin{equation*}
    0 \leq  \limsup_{n\rightarrow\infty}\E_x\left[e^{-r(\tau_{n}\land\tau\land n)}
    V_{b}(X_{\tau_{n}\land\tau\land n})\right]\leq
    \limsup_{n\rightarrow\infty}
    \E_x\left[e^{-r(\tau_{n}\land\tau\land n)}AX_{\tau_{n}\land\tau\land n} \right],
\end{equation*}
for some constant $A>0$ (cf. \eqref{eq:newVb}, Definition \ref{def:Admissible-g}, and that $V'_b$ is bounded and $V_{b}(0)=0$). Hence, to prove \eqref{proofhelp} it suffices to prove:
\begin{align}\label{g-func-the-thing-that-needs-to-die}
    \limsup_{n\rightarrow\infty}
    \E_x\left[e^{-r(\tau_{n}\land\tau\land n)}X_{\tau_{n}\land\tau\land n}\right]
    \leq \limsup_{n\rightarrow\infty}
    \E_x\left[e^{-r(\tau_{n}\land n)}n\right]=0,
\end{align}
where inequality above follows from the observation that $X_\tau=0$ on $\{\tau<\infty\}$. Hence, all all that remains to do is to show that the equality in \eqref{g-func-the-thing-that-needs-to-die} holds.

Pick a constant $\kappa'$ with $\kappa'>\kappa$ (cf. Assumption \ref{Assumption_2}) and $\kappa'>\theta$ for all $\theta\in\Theta$ as well as $\kappa'>y_1, b$ (cf. Definition \ref{def:Admissible-g}) so that $g(y) = g(\kappa')$ (and $g'(y)=0$, used below) for $y>\kappa'$. For $y\geq\kappa'$, we then have that the drift function in \eqref{SDE-g-function-proof} satisfies:
\begin{align}\label{g-proof-set-up-for-comparison-principle}
\begin{split}
    \mu(y) - 2\lambda_b^\ast(y)
    &= \mu(y) -2 \frac{\frac{\sigma^2(y)}{2}g'(y)+\mu(y)g(y)-rV_b(y)}{g(y)}\\
    &= 2r\frac{V_{b}(y)}{g(y)}-\mu(y)\\
    &= 2r\left(\frac{\frac{g(b)}{\psi'(b)}\psi(b) + G(\kappa') - G(b)}{g(\kappa')} + 
    \int_{\kappa'}^{y}\frac{g(u)}{g(\kappa')}du\right) - \mu(y)\\
    &= 2ry + 2r\left(\frac{\frac{g(b)}{\psi'(b)}\psi(b) + G(\kappa') - G(b)}{g(\kappa')}-\kappa'\right)- \mu(y).
\end{split}
\end{align}
Now consider the SDE:
\begin{align}\label{eq:sde-g=1-no-dev}
\begin{split}
    d\xi_t &= \mu_{\kappa'}(\xi_t)dt+
    \sigma(\xi_t)dW_t, \enskip \xi_0 = x,\enskip 0 \leq t \leq 
    \tau^\xi:=\inf\{t\geq0: \xi_t\leq 0\}\\
    \mu_{\kappa'}(y) &:=
    2ry+ \mathcal{C} - \mu(y),
\end{split}
\end{align}
where we select the constant  $\mathcal{C}$ so that:
\begin{equation}\label{help-asdas}
    \mu_{\kappa'}(y)\geq \mu(y)-2\lambda_{b}^{\ast}(y),\enskip  y\geq0.
\end{equation}
(It is easily seen, using e.g., continuity of $\mu$, that such a constant $\mathcal{C} $ can be found, but its exact value is not useful here). This SDE has a unique strong solution $(\xi_t)_{t\geq0}$ (by Lipschitz continuity), and in the below we will prove that: 
\begin{equation}\label{xi_needs_to_die}
    \limsup_{n\rightarrow\infty}
    \E_x\left[e^{-r(\tau^\xi_{n} \land n)}n\right]=0,
\end{equation}
where $\tau_n^\xi=\{t\geq0:\xi_{t}\geq n\}$. Using the comparison principle we immediately find that $\tau_{n}\geq\tau_{n}^{\xi}$ for $n\geq x$ a.s. so that \eqref{xi_needs_to_die} implies the inequality \eqref{g-func-the-thing-that-needs-to-die} (note that we for the comparison principle use \eqref{help-asdas} and that the skew point intensities in \eqref{SDE-g-function-proof} are negative, i.e. $-c_j<0$, cf. e.g., \cite[Theorems 3.1 and 4.6]{bass2005one}, and \cite[Ch. IX]{revuz2013continuous}).

It remains to show that \eqref{xi_needs_to_die} holds. Let $\psi_{\kappa'}$ be the increasing solution to the differential equation associated with \eqref{eq:sde-g=1-no-dev}:
\begin{equation}\label{ODE-psib}
    \frac{\sigma^{2}(y)}{2}\psi_{\kappa'}''(y)+ \mu_{\kappa'}(y)\psi_{\kappa'}'(y)=r\psi_{\kappa'}(y), \enskip \psi_{\kappa'}'(0)=1 \enskip, \psi_{\kappa'}(0)=0,
\end{equation}
and note (see e.g., \cite[II.10]{borodin2012handbook}) that, for $x\leq n$, we have: 
\begin{equation*}
    \frac{\psi_{\kappa'}(x)}{\psi_{\kappa'}(n)} = \E_x\left[e^{-r\tau_{n}^{\xi}}\right].
\end{equation*}
Using this and basic observations we find, for $x\leq n$, that:
\begin{align}\label{eq:generalized-psi=expec-bound}
\begin{split}
    0 & \leq \E_x\left[e^{-r(\tau^\xi_{n} \land n)}n\right]\\
    &\leq n\E_x\left[e^{-r\tau_{n}^{\xi}}\right] + ne^{-rn}\\
    &= n\frac{\psi_{\kappa'}(x)}{\psi_{\kappa'}(n)} + ne^{-rn} \overset{n\rightarrow\infty}{\longrightarrow} 0,
\end{split}
\end{align}
where the limit follow from Lemma \ref{Lemma_that_was_previously_an_assumption}(III); to see that this lemma is applicable notice that $y\mapsto\mu_{\kappa'}(y)-(r-c)y$ is strictly decreasing on $[\kappa, \infty)$ (this is verified using Assumption \ref{Assumption_2} and the second line in \eqref{eq:sde-g=1-no-dev}). 
    
Lastly, we note that \eqref{eq:generalized-psi=expec-bound} implies that \eqref{xi_needs_to_die} holds, and we have thus proved the second claim. \hfill \qedsymbol{}

\subsection{Proof of Theorem \ref{Theorem_existence_g_NE_1}}\label{subsec:proof_g_existence}
We will use that the assumption that $g$ is $b'$-admissible implies that $g$ is $b$-admissible for any  $b\geq b'$.

\begin{enumerate}[(I)]
    \item Let us first consider $\underline{b}$ so that $\underline{b}>b',y_1, \kappa, b_2$ (where $b'$ is from the theorem statement, $y_1$ is from Definition \ref{def:Admissible-g}, $\kappa$ is from Assumption \ref{Assumption_2}, $b_2$ is from Lemma \ref{Lemma_that_I_don't_know_what_to_call}). We remark that $\underline{b}$ will be further specified below. 

    For $x\geq b$, the condition in \eqref{NE:cond2-g} is directly verified, for any $b\geq 0$. Using Lemma \ref{Lemma_that_I_don't_know_what_to_call}(I)-(III) we find that that $\psi'(x) \geq \psi'(b)>0$ for $x\leq b$ for any sufficiently large $b$. Using also \eqref{eq:newVb} we find, for $x<b$ and any fixed sufficiently large $b$, that $V_b'(x)=g(b)\frac{\psi'(x)}{\psi'(b)}\geq g(b)=g(y_1) \geq g(x)$ (the last equality and the last inequality hold by assumption). Hence, \eqref{NE:cond2-g} holds, for any $b \geq \underline{b}$ for a sufficiently large $\underline{b}$.
        
    Let us now show that \eqref{NE:cond1-g} holds for any $b\geq \underline{b}$.
    Using Lemma \ref{Lemma_that_I_don't_know_what_to_call}(I)-(II) we find, for $b\geq b_2$, that:
    \begin{equation*}
        0\geq \frac{\sigma^2(b)}{2}\psi''(b)=r\psi(b)-\mu(b)\psi'(b)\implies
        \mu(b)-r\frac{\psi(b)}{\psi'(b)}\geq 0,
    \end{equation*}
    (where we also used that $\psi$ solves \eqref{eq:ode-for-psi}). Since $\underline{b}>b_2,y_1,\kappa$, for any $x$ and $b$ with $x\geq b \geq \underline{b}$ we then (remembering the constant $c$ from Assumption \ref{Assumption_2} and that $g$ is constant on $[\underline{b},\infty)$) have that the numerator in \eqref{eq:lambda-inthmwithg} satisfies, for $x \in (b, \infty)\setminus\Theta$: 
    \begin{align*}
        A_X V_b(x)-r V_b(x) &=
        \frac{\sigma^2(x)}{2}g'(x)+\mu(x)g(x)-rg(b)\frac{\psi(b)}{\psi'(b)}-r(G(x)-G(b))\\
        &=\mu(x)g(b)-rg(b)\frac{\psi(b)}{\psi'(b)}-r(x-b)g(b)\\
        &\geq(\mu(b)+(r+c)(x-b))g(b)-rg(b)\frac{\psi(b)}{\psi'(b)}-r(x-b)g(b)\\
        &=\left(\mu(b)-r\frac{\psi(b)}{\psi'(b)}\right)g(b)+c(x-b)g(b)\\
        &\geq\left(\mu(b)-r\frac{\psi(b)}{\psi'(b)}\right)g(b) \geq 0.
    \end{align*}
    Hence, \eqref{NE:cond1-g} holds for any $b\geq \underline{b}$ for a sufficiently large $\underline{b}$. The result follows.
    \item For $x\geq b$, the condition in \eqref{NE:cond2-g} is directly verified, for any $b\geq 0$. Lemma \ref{Cor:concave-psi} implies that $\psi$ is concave on $[0, b)$. 
    Hence, $V_{b}(x) =  \frac{\psi'(x)}{\psi'(b)}g(b) \geq g(b) \geq g(x),x < b$ (where the last inequality holds by the assumptions in the theorem statement, and the assumption that $1\geq g(x)$, cf. Definition \ref{def:Admissible-g}). The result follows.
    \item From statement (II) in the present theorem we have that \eqref{NE:cond2-g} holds. Using the assumptions in the theorem statement we have, for any $b\geq b'$, that the numerator in \eqref{eq:lambda-inthmwithg} satisfies, for $x \in (b, \infty)\setminus\Theta$ (note $g'(x)\geq0$ on this set in the present case):
    \begin{align*}
        A_XV_b(x)-rV_b(x) &= \frac{\sigma^2(x)}{2}g'(x)+
        \mu(x)g(x)-rg(b)\frac{\psi(b)}{\psi'(b)} -r(G(x)-G(b))\\
        &\geq (\mu(b)+r(x-b))g(x)-rg(b)\frac{\psi(b)}{\psi'(b)}-rg(x)(x-b)\\
        &=\mu(b)g(x)-rg(b)\frac{\psi(b)}{\psi'(b)}\geq 
        g(b)\left(\mu(b)-r\frac{\psi(b)}{\psi'(b)}\right).
    \end{align*}
    Lemma \ref{Cor:concave-psi} implies that $\psi$ concave on $[0,b)$ (for any $b\geq0$). Using also Lemma \ref{Lemma_that_I_don't_know_what_to_call}(I) we find:
    \begin{equation*}
        \mu(b) - r\frac{\psi(b)}{\psi'(b)}=-\frac{\sigma^2(b)}{2}\frac{\psi''(b)}{\psi'(b)}\geq0.
    \end{equation*}
    The observations above imply that \eqref{NE:cond1-g} holds. The result follows.
\end{enumerate}
\hfill \qedsymbol{}

\subsection{Examples}\label{subsec:examples_g}
Here we use the developed theory to study two examples. The first example is an extension of Example \ref{first-example}.
\begin{example}\label{Example:g_jump} 
    Here we consider a version of Example \ref{first-example} and search for an equilibrium of the kind in Theorem \ref{Theorem_verification_g_NE_2}. Suppose $\sigma(x)\equiv\sigma>0$ and $\mu(x) = \mu x$ where $\mu>r$ is a constant, then Assumptions \ref{Assumption_1} and \ref{Assumption_2} are immediately verified. Consider:
    \begin{equation*}
        g(x) = \begin{cases}
            a_{1} & \text{when: }  0 \leq x<\ell\\
            a_{1}+a_{2} & \text{when: }x\geq\ell,
        \end{cases}
    \end{equation*}
    where $a_i>0$ and $\ell >0$  are constants (see Remark \ref{remark:motivating-example} for a specific case and an interpretation). In this case the condition for the skew point  intensities in Definition \ref{def:Admissible-g} (i.e.,  $c_j\in (0,\frac{1}{2}]$, with $c_j$ defined in \eqref{eq:def-c_j}) simplifies to $a_{2}\leq 2 a_{1}$; therefore we assume that $a_1$ and $a_2$ satisfy this condition. Under this condition we have, by Theorem \ref{Theorem_existence_g_NE_1}(III) (the conditions of which are easily verified), a global Markovian Nash equilibrium for each $b\geq0$; see Figure \ref{fig:example2} for illustrations. 
    \begin{figure}[ht]
        \centering
        \includegraphics[width=0.45\linewidth]{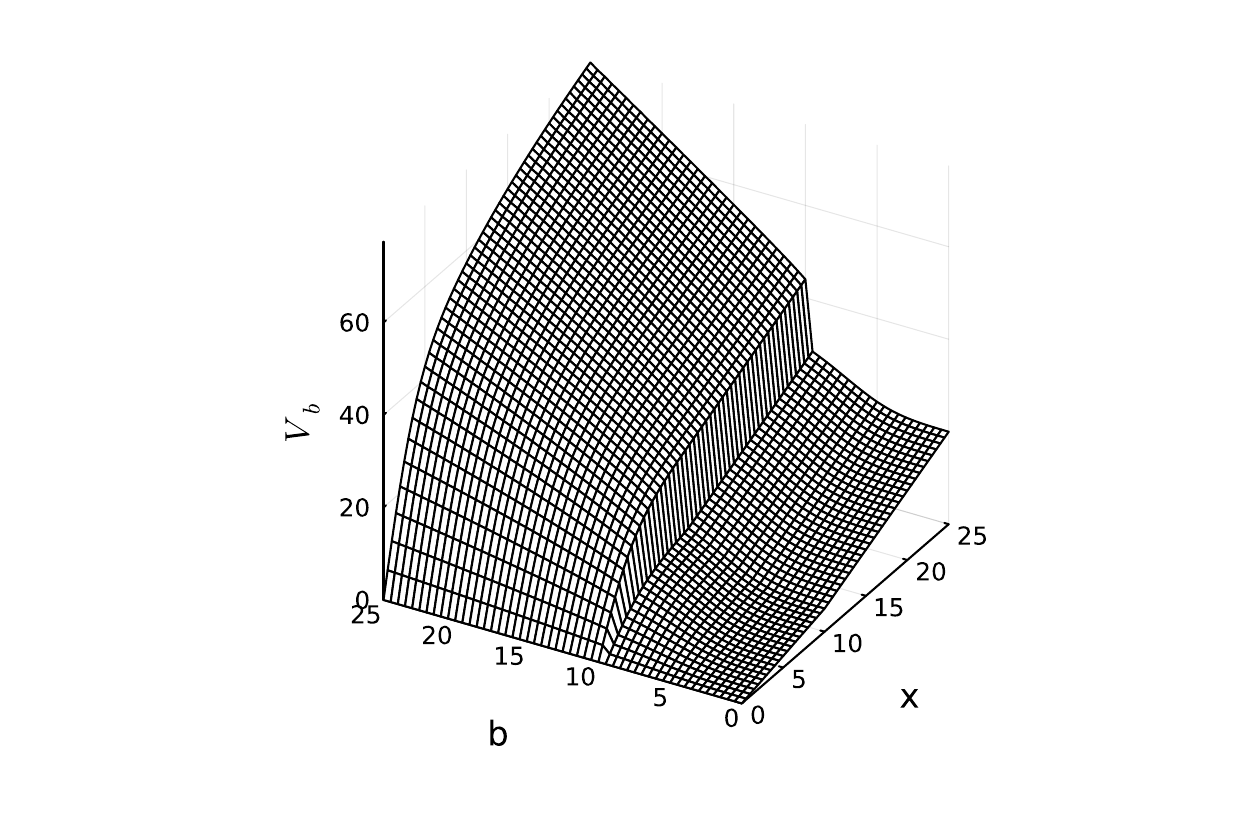}
        \includegraphics[width=0.45\linewidth]{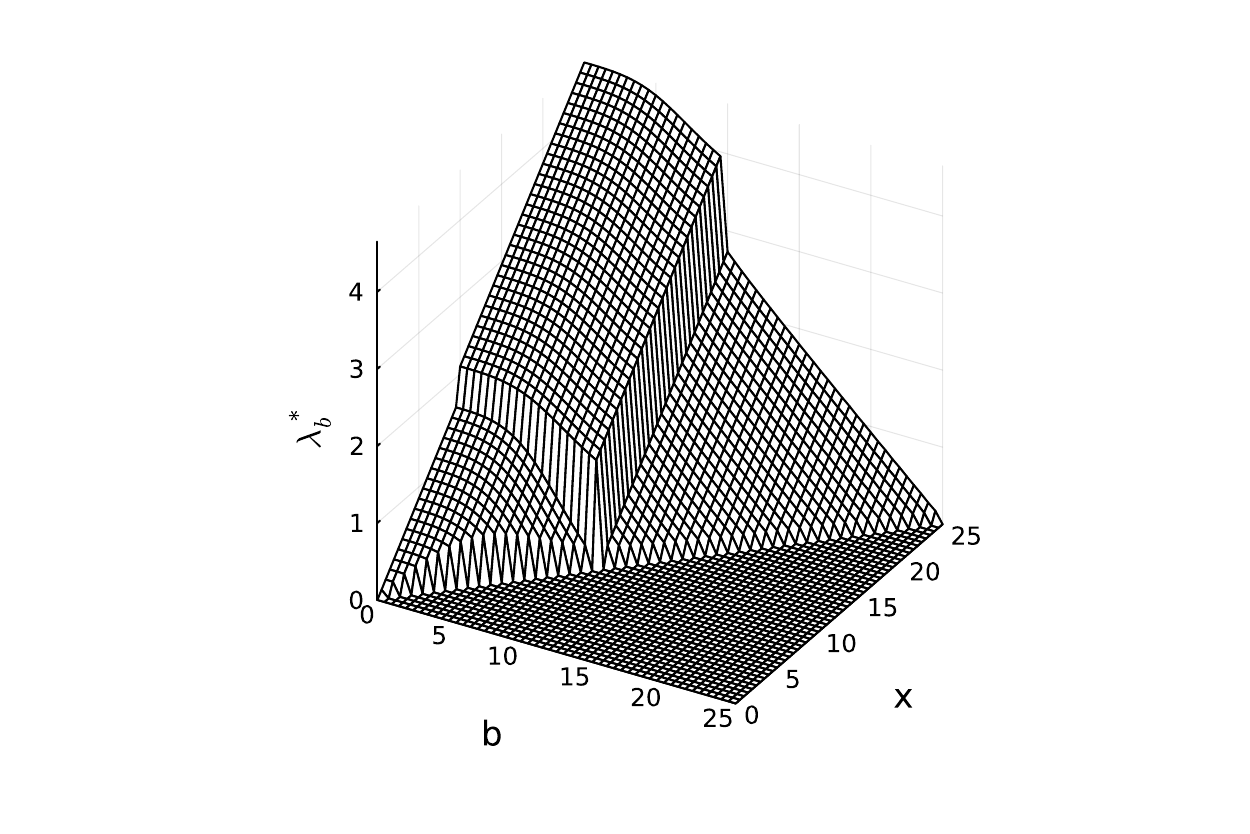}
        \caption{Illustrations for Example \ref{Example:g_jump} (based on standard numerical methods, cf. Example \ref{first-example}) for $r=0.08$, $\mu=0.25$, $\sigma=2$, $\ell = 10$, $a_1 = a_2 = \frac{1}{2}$ (this corresponds to the $g$ in Remark \ref{remark:motivating-example}). 
        The value function $V_b$ (given by \eqref{eq:newVb}) is increasing in $x$ and in $b$, and there is a discontinuity in $b\mapsto V_b$ for $b=\ell$ (where the profit rate $g$ has its discontinuity). The extraction rate $\lambda_b^\ast$ is increasing in $x$ and decreasing in $b$ with a discontinuity at $x=b$ due to the indicator function (cf. \eqref{eq:lambda-inthmwithg}), while the other  discontinuities appear due to discontinuities in $x \mapsto V_b'(x)$ and $b \mapsto  V_b(x)$. For $b < \ell$ the extraction equilibrium  strategy includes a skew point (not illustrated here) with intensity $c_1=\frac{1}{3}$ at $x=\ell$ (calculated using \eqref{eq:def-c_j}), while there is no  skew point in case $b\geq\ell$.}
        \label{fig:example2}
    \end{figure}
\end{example}

\begin{example}\label{Example:g_complicated}
    Here we consider an example with a profit rate function $g$ that exhibits continuos variability as well as discontinuities and search for an equilibrium of the kind in Theorem \ref{Theorem_verification_g_NE_2}. The example illustrates how the control rate and the skew points of the equilibrium strategy interact (see Figure \ref{fig:3}) and how the diffusion coefficient $\sigma$ can influence the equilibrium control rate (see \eqref{eq:ex2}). Consider:  
    \begin{equation*}
        r=\frac{1}{10}, \enskip \mu(x) = \frac{11}{10}rx + \frac{1}{1000}, \enskip\sigma(x) = 2\frac{1+x}{2+x},\enskip g(x) = \begin{cases}
            \frac{1}{17} & \text{when: }  0 \leq x<1\\
            \frac{1}{6} & \text{when: }1\leq x<3\\
            \frac{1}{6} + \frac{x^2 - 6x + 9}{24} & \text{when: }3\leq x<5\\
            1 & \text{when: }x\geq 5.
        \end{cases}
    \end{equation*}
    Assumptions \ref{Assumption_1} and \ref{Assumption_2} are immediately verified. It is easily verified that we have a global Markovian Nash equilibrium for any $b\geq 0$ (use Theorem \ref{Theorem_existence_g_NE_1}(III)). In this example we consider the equilibrium for $b=0$, which implies that the value function is given by $V_b=G$. The associated equilibrium strategy corresponds to using skew points at 
    $x \in \{1, 5\}$ with intensities $\frac{11}{23}$ and $\frac{1}{2}$ (calculated according to \eqref{eq:def-c_j}), and the equilibrium rate is given by:
    \begin{align}\label{eq:ex2}
    \begin{split}
        \lambda^{\ast}(x) &= 
        \left(\frac{\sigma^2(x)}{2}\frac{g'(x)}{g(x)} + \mu(x) - r \frac{G(x)}{g(x)}\right)\1_{\{x\notin\{1, 5\}\}} \\
        &= \begin{cases}
            0 & \text{when: }x\in\{1, 5\}\\
            \frac{1}{100}x + \frac{1}{1000} & \text{when: } 0 \leq x<1\\
            \frac{1}{100}x + \frac{1117}{17000} & \text{when: } 1<x<3 \\
            2\left(\frac{1+x}{2+x}\right)^2\frac{2x-6}{x^2 -6x + 13} + 
            \frac{11}{100}x + \frac{1}{1000}-
            \frac{\frac{1}{17} + \frac{2}{6} + 
            \frac{x^3 - 9  x^2 + 39  x - 63}{72}}{10\frac{x^2-6x+13}{24}} & 
            \text{when: }3\leq x<5 \\
            \frac{11}{100}x + \frac{1}{1000} - \frac{x-5 + \frac{128}{153}}{10} & 
            \text{when: } x>5.
        \end{cases}
        \end{split}
    \end{align}
    \begin{figure}[ht]
        \centering
        \includegraphics[width=0.45\linewidth]{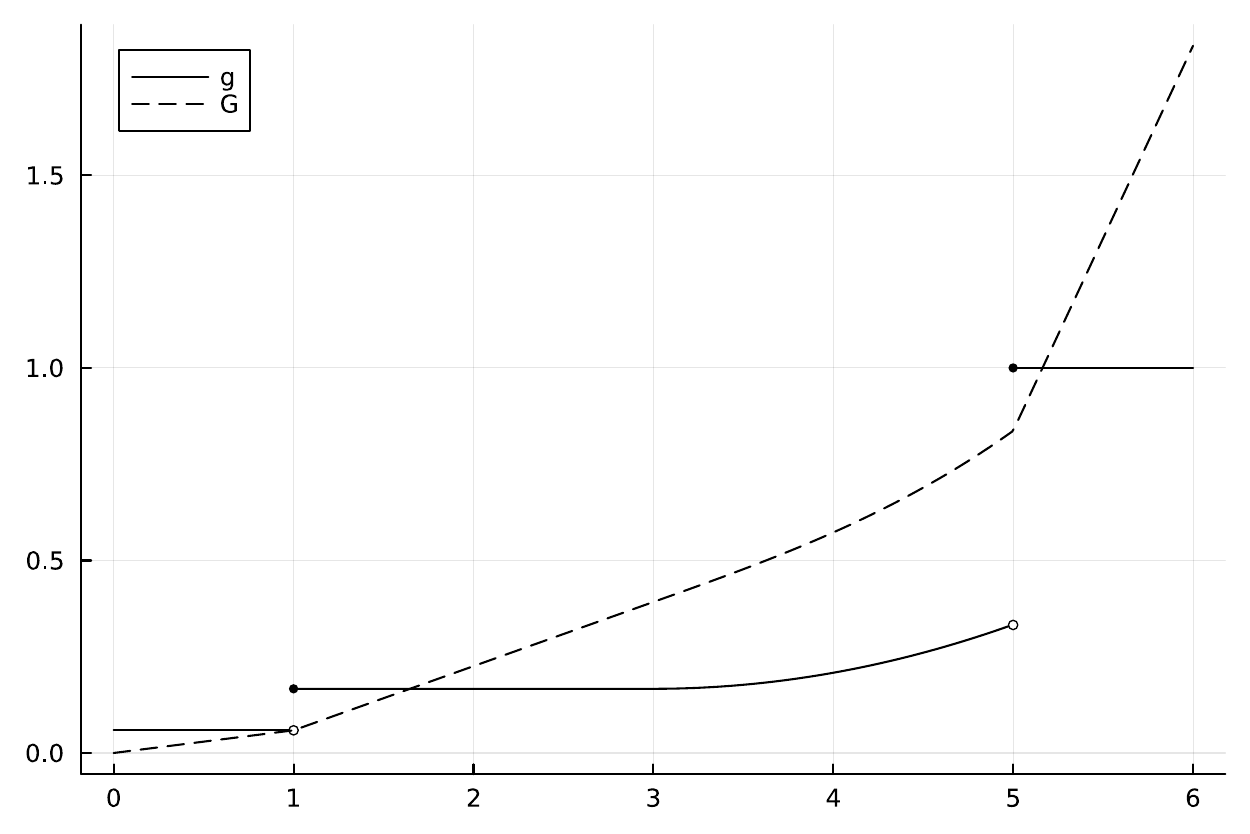}
        \includegraphics[width=0.45\linewidth]{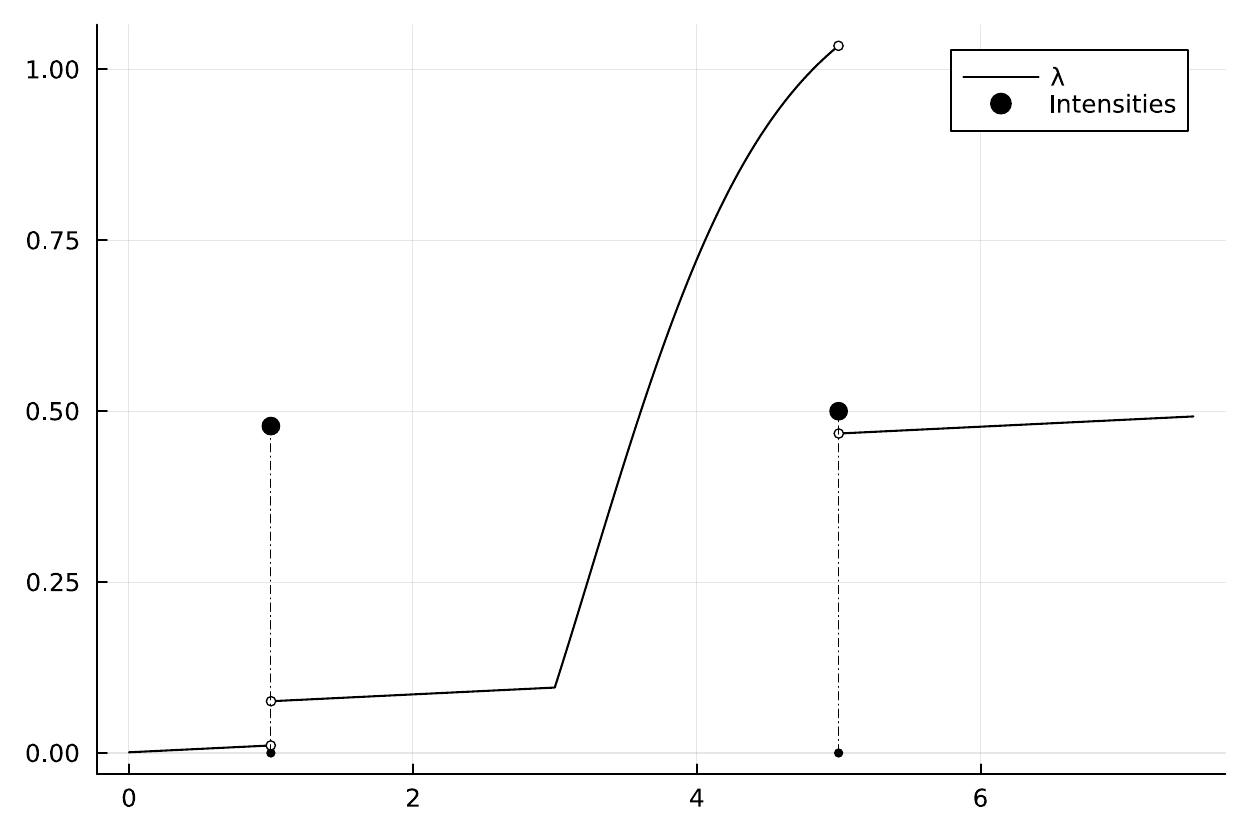}
        \caption{Illustrations for Example \ref{Example:g_complicated}. The value function $V_b=G$  (cf. $b=0$), and the profit rate function $g$ are illustrated in the first panel. The second panel illustrates the skew points and their intensities as well as the extraction rate of the equilibrium strategy.}
         \label{fig:3}
    \end{figure}
\end{example}

\section{A case study for equilibria with jumps and reflection}\label{sec:solution-with-jumps}
A careful reading of the previous section reveals that the theory developed there does not allow us to find an equilibrium in which the players control with jumps (to see this note that the jump region of the equilibrium strategy in Theorem \ref{Theorem_verification_g_NE_2} is the empty set). Hence, it is natural to ask:  can we find equilibria with jumps? 

The answer to this question is yes, and to show that this is so we will here consider a case study in which we find an equilibrium such that the players' collective control adds up to the usual (in the context of singular stochastic control theory) bang-bang type strategy of reflecting at a given boundary and jumping to that boundary if the process starts above it. 

In particular, we consider in this section a profit rate that is piecewise constant with one jump point $\ell>0$ that earns a higher profit rate, such that above this jump point the profit rate is zero:
\begin{equation}\label{eq:g-case-study}
    g(x) = \begin{cases}
        w & \text{for: }x<\ell,\\
        1 & \text{for: }x=\ell\\
        0 & \text{for: }x>\ell,  
        \enskip \text{for fixed $\ell>0$ and }w\in\left(0, \frac{1}{3}\right].
    \end{cases}
\end{equation}
(Note that the definition in \eqref{g-function-payouts2} can be trivially extended to allow for functions $g$ that are not RCLL, including $g$ given by \eqref{eq:g-case-study}, which we rely on below. Indeed, this definition is valid for e.g., non-negative functions that are locally integrable and have left limits.) 

\begin{theorem}[Equilibrium existence and characterization]\label{Example-with-jumps}
    Suppose Assumption \ref{Assumption_1} holds. Consider the game (Definition \ref{def:Markov-NE-for-g}) corresponding to $g$ defined in \eqref{eq:g-case-study}. Suppose $\mu(0)\geq0$ and that $x\mapsto \mu(x)-rx$ is non-decreasing on $[0, \ell)$. Then the pair of strategies $(D^1, D^2)$, where:
    \begin{equation*}
        D^i:= \left\{0, \{\ell\}, \left\{\frac{1}{2}\right\},
        x\mapsto \frac{\1_{\{x>\ell\}}}{2}(x-\ell), 
        [\ell, \infty)\right\},
    \end{equation*} 
    constitutes a global Markovian Nash equilibrium with a value function $V_{\ell}$ given by:
    \begin{equation}\label{V_b:case-study}
        V_{\ell}(x) = \begin{cases}
            \frac{g(\ell) + g(\ell-)}{4}\frac{\psi(x)}{\psi'(\ell)} & 
            \text{when: } 0 \leq x< \ell\\
            \frac{g(\ell) + g(\ell-)}{4}\frac{\psi(\ell)}{\psi'(\ell)} & 
            \text{when: }x\geq \ell.
        \end{cases}
    \end{equation}
\end{theorem}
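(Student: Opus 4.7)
The plan is to follow the two-part verification scheme of Theorem \ref{Theorem_verification_g_NE_2}: first bound any player-$1$ deviation from above by $V_\ell$, then verify equality under $(D^1,D^2)$. I first check that $(D^1,D^2)$ satisfies Definition \ref{def:Markovian-admissible-controls} (each $B^i = [\ell,\infty)$ is closed and connected; $\mathcal{J}^i(x) = \tfrac{1}{2}(x-\ell)\1_{\{x>\ell\}}$ vanishes off $B^i$, is strictly positive on $\mathrm{Int}(B^i) = (\ell,\infty)$, and satisfies $x-\mathcal{J}^i(x) = (x+\ell)/2\geq 0$; and $\ell \notin \mathrm{Int}(B^i)$ with skew intensity $\tfrac{1}{2}\in(0,1]$). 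Because $\alpha(x) = x - \mathcal{J}^1(x) - \mathcal{J}^2(x) = \ell$ for every $x>\ell$, the aggregated jump maps $\{x>\ell\}$ to $\ell$ in one step, and the combined skew intensity at $\ell$ equals $1$, so the joint SDE is of reflection type on $[0,\ell]$ with globally Lipschitz coefficients and admits a unique strong solution by \cite{bass2005one} and Assumption \ref{Assumption_1}. I then record the key analytic properties of $V_\ell$: it is $\mathcal{C}^2$ on $(0,\ell)\cup(\ell,\infty)$, continuous at $\ell$; it satisfies $A_XV_\ell - rV_\ell = 0$ on $(0,\ell)$ and $A_XV_\ell - rV_\ell = -rV_\ell(\ell)\leq 0$ on $(\ell,\infty)$; $V_\ell'(\ell-) = (1+w)/4$, $V_\ell'(\ell+) = 0$; and the central pointwise inequality $V_\ell'(x)\geq g(x)$ (with one-sided limits at $\ell$). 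The latter uses Lemma \ref{Cor:concave-psi}, applicable under the theorem's hypothesis, to give concavity of $\psi$ on $[0,\ell)$, hence $\psi'(x)\geq\psi'(\ell)$ and $V_\ell'(x) = \tfrac{1+w}{4}\tfrac{\psi'(x)}{\psi'(\ell)}\geq \tfrac{1+w}{4}\geq w = g(x)$, where the final step is exactly $w\leq 1/3$. Absolute continuity then upgrades this to $V_\ell(b)-V_\ell(a)\geq G(b)-G(a)$ for any $0\leq a\leq b$.

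For the deviation bound, let $D = \{\lambda,\{x_j\},\{c_j\},\mathcal{J},B\}\in\mathbb{L}$ be arbitrary for player $1$, and restrict attention to strategies for which the associated controlled SDE admits a unique strong solution (else $J_1 = -\infty$ by Definition \ref{def:value-funcs}; in particular any skew of player $1$ at $\ell$ must have intensity $c\leq 1/2$, lest the combined skew intensity at $\ell$ exceed $1$). Applying the generalized It\^o formula of \cite[Sec.~3.5]{Peskir} and \cite[Theorem~3.1]{peskir2007change} to $e^{-rs}V_\ell(X_s)$ on $[0,\tau\wedge\tau_n\wedge n]$, exactly as in the first part of the proof of Theorem \ref{Theorem_verification_g_NE_2}, and taking expectation, expresses $V_\ell(x)$ as a sum of five terms compared to the components of $J_1(x;D,D^2)$:
\begin{enumerate}[(i)]
\item the terminal term $\E_x[e^{-r(\tau\wedge\tau_n\wedge n)}V_\ell(X_{(\cdot)-})]\geq 0$ vanishes as $n\to\infty$ by boundedness of $V_\ell$ on $[0,\ell]$ (valid since $X_t\leq \ell$ for $t>0$);
\item the drift/rate integrand satisfies $\lambda(V_\ell'-g)\geq A_XV_\ell-rV_\ell$ pointwise (both sides zero on $(0,\ell)$; LHS zero and RHS negative on $(\ell,\infty)$);
\item each skew of player $1$ at $x_j\neq \ell$ is controlled by $V_\ell'(x_j)\geq g(x_j)$;
\item the local time at $\ell$ contributes the kink term $\tfrac{V_\ell'(\ell-)-V_\ell'(\ell+)}{2} = \tfrac{1+w}{8}$, player $2$'s skew term $\tfrac{1}{2}\cdot \tfrac{V_\ell'(\ell-)+V_\ell'(\ell+)}{2} = \tfrac{1+w}{16}$, and an optional player-$1$ skew of intensity $c\leq 1/2$ yielding $c\cdot\tfrac{1+w}{8}$, summing to $\tfrac{(1+w)(3+2c)}{16}$, which dominates player $1$'s skew-at-$\ell$ payoff $c\cdot\tfrac{g(\ell)+g(\ell-)}{2} = \tfrac{8c(1+w)}{16}$ precisely when $c\leq 1/2$;
\item the jump contribution $V_\ell(X_{s-})-V_\ell(X_s)$ with $X_s = (X_{s-}-\Delta D^1_s-\Delta D^2_s)\vee 0$ is at least $G(X_{s-})-G(X_s)$ (by $V_\ell'\geq g$), which in turn dominates $\tfrac{\Delta D^1_s}{\Delta D^1_s+\Delta D^2_s}(G(X_{s-})-G(X_s))$ since the weight is at most $1$.
\end{enumerate}
Monotone convergence then yields $V_\ell(x)\geq J_1(x;D,D^2)$.

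For equality under $D = D^1$, each of (ii)--(v) becomes an equality: (ii) vanishes since $\lambda\equiv 0$; (iii) is vacuous; (iv) is the tight case $c=1/2$; and (v) collapses because the only jump occurs at $t=0$ with $X_{0-} = x > \ell$ and $X_0 = \ell$, yielding both $V_\ell(x)-V_\ell(\ell) = 0$ and $\tfrac{1}{2}(G(x)-G(\ell)) = 0$ since $g\equiv 0$ on $(\ell,\infty)$. The terminal term vanishes in the limit by boundedness, giving $J_1(x;D^1,D^2) = V_\ell(x)$. The main obstacle is the careful local-time bookkeeping at $\ell$, where $V_\ell'$ and $g$ each have a discontinuity: the tight inequality $\tfrac{(1+w)(3+2c)}{16}\geq\tfrac{8c(1+w)}{16}$ (equivalent to $c\leq 1/2$) is what guarantees domination of any admissible deviation at the skew point, and its very validity — in particular that $V_\ell'(\ell-)\geq g(\ell-)$ — rests on the standing hypothesis $w\leq 1/3$.
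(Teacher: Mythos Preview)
Your proof is correct and follows the same It\^{o}-based verification scheme as the paper's Appendix~\ref{thmproofappendix}: verify $J_1(x;D^1,D^2)=V_\ell(x)$ and $J_1(x;D,D^2)\le V_\ell(x)$ by expanding $e^{-rt}V_\ell(X_t)$ and comparing term by term, using $V_\ell'\ge g$ (which rests on Lemma~\ref{Cor:concave-psi} and $w\le\tfrac13$) together with the local-time bookkeeping at $\ell$. In particular, your computation in item~(iv) --- that the combined kink-plus-skew coefficient $\tfrac{(1+w)(3+2c)}{16}$ dominates player~1's skew payoff $\tfrac{c(1+w)}{2}$ precisely for $c\le\tfrac12$, with equality at $c=\tfrac12$ --- matches the paper's corresponding calculation in part~(II).

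There is, however, a genuine simplification in your handling of the initial jump when $x>\ell$. The paper treats this as a separate case~(IV): it writes $J_1(x;D,D^2)=J_1(X_0;D,D^2)+\tfrac{\Delta D_0}{\Delta D_0+\Delta D_0^2}(G(x)-G(X_0))$, invokes case~(II) for the first summand, replaces $g(\ell-)=w$ by $\tfrac{1+w}{4}$, and then proves $f(\Delta D_0)\le V_\ell(\ell)$ via an auxiliary function $h$ by computing $h,h',h''$ and exploiting concavity of $\psi$. You bypass all of this: since $g\equiv0$ on $(\ell,\infty)$ and $V_\ell$ is constant there, the jump inequality $V_\ell(X_{s-})-V_\ell(X_s)\ge G(X_{s-})-G(X_s)$ (a direct consequence of the integral form of $V_\ell'\ge g$) immediately gives $V_\ell(x)-V_\ell(X_0)=V_\ell(\ell)-V_\ell(X_0)\ge G(\ell)-G(X_0)=G(x)-G(X_0)$, and multiplying by the weight $\tfrac{\Delta D^1}{\Delta D^1+\Delta D^2}\le 1$ finishes. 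This is the more efficient route. (A very minor remark: in your item~(ii) the phrase ``both sides zero on $(0,\ell)$'' should read ``RHS zero, LHS $\ge 0$'', since $\lambda(V_\ell'-g)$ can be strictly positive; the inequality is of course unaffected.)
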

The proof of Theorem \ref{Example-with-jumps} is similar to the proof of Theorem \ref{Theorem_verification_g_NE_2} and can be found in Appendix \ref{thmproofappendix}.

\begin{remark} 
    The equilibrium in Theorem \ref{Example-with-jumps} is of the bang-bang type mentioned in the beginning of this section. In particular, the equilibrium strategy can be described as follows: 
    \begin{itemize}
        \item The rate control is zero for each player. 
        \item Each player controls  with a local time push with intensity $\frac{1}{2}$ at $\ell$, implying that the aggregated local time push intensity at $\ell$ is $1$ so that the controlled state processes is reflected at $\ell$ when reaching $\ell$ from below (see the proof of Theorem \ref{Example-with-jumps} for details).  
        \item Each player controls with an initial jump $\frac{x-\ell}{2}$ in the state process whenever the state process starts in  $(\ell, \infty)$, so that there is an initial jump to $\ell$ in this case.
        \item The intuition for \eqref{V_b:case-study} is thereby clear; in particular, given that the state process is controlled only so that it is reflected at $\ell$, we find, using \eqref{g-function-payouts}--\eqref{g-function-payouts2}, that the corresponding values satisfy (for $x\leq\ell$):
        \begin{align*}
            J_{i}\left(x; D^1,D^2\right) 
            &= \E_{x}\left[\int_{[0,\tau]} 
            e^{-rt}g\left(X^{D^1,D^2}_{t-}\right)\circ d D_{t}^{i}\right]\\
            &= \E_x\left[\int_{[0,\tau]}e^{-rt}  
            \frac{g(\ell) + g(\ell-)}{2} \frac{1}{2}dL^{\ell}_t\right]\\
            &= \E_x\left[\int_{[0,\tau]}
            e^{-rt}dL^{\ell}_t\right]\frac{g(\ell) + g(\ell-)}{4}=V_{\ell}(x).
        \end{align*}
        where the last equality is a standard result (see the proof for details).
\end{itemize}
\end{remark}
\appendix
\section{Results for increasing fundamental solutions}\label{ODEappendix}
This appendix includes results for the increasing fundamental solution $\psi$ (defined in the beginning of Section \ref{sec:Naive_Lebesgue_absolutely_continuous_Nash_equilibria}) as well as another increasing fundamental solution $\hat \psi$ (see \eqref{hatODE}) that we rely on in the present paper. We note that similar results are often used in the study of stochastic control problems of optimal dividend type, although the present results require weaker assumptions than those usually presented (cf. e.g., \cite[Sec. 7]{bai2012optimal}, \cite[Lemma 2.4]{ekstrom2023finetti} and \cite[Sec. 2 and App A]{christensen2022moment}); in particular, we make no assumption regarding continuos differentiability of $\mu$ and $\sigma$, and we assume a relationship between the growth of $x\mapsto \mu(x)$ and $x\mapsto rx$ only on a subset of the real line.
\begin{lemma}\label{Lemma_that_was_previously_an_assumption}
    Consider $\widehat{\mu}:[0, \infty)\rightarrow\mathbb{R}$ which is assumed to be globally Lipschitz continuous and $\sigma:[0, \infty)\rightarrow(0, \infty)$ which we suppose satisfies Assumption \ref{Assumption_1} (i.e., $\sigma$ is also globally Lipschitz continuous). Suppose $\displaystyle x\mapsto \widehat{\mu}(x)-(r-c)x$ is strictly decreasing on $[\kappa, \infty)$, given constants $\kappa>0$ and $c>0$. Let $\widehat{\psi}$ be the fundamental increasing $\mathcal{C}^{2}\left[0,\infty\right)$-solution to:
    \begin{equation}\label{hatODE}
    \frac{\sigma^{2}(x)}{2}\widehat{\psi}''(x)+ \widehat{\mu}(x)\widehat{\psi}'(x)=r\widehat{\psi}(x), \enskip \widehat{\psi}'(0)=1 \enskip, \widehat{\psi}(0)=0.
    \end{equation}
    \begin{enumerate}[(I)]
        \item $\widehat{\psi}(x)>0$, for $x\geq0$.    
        \item There exists a unique point $b_1\in [\kappa,\infty)$ such that $\widehat{\psi}''(x)<0, x\in (\kappa,b_{1})$, $\widehat{\psi}''(x) >0, x\in (b_1,\infty)$; in particular, $\widehat{\psi}$ is convex or concave-convex when restricted to  $[\kappa,\infty)$.  
        \item \begin{align}\label{Lemma_that_was_previously_an_assumption:eq}
            \frac{n}{\widehat{\psi}(n)}\overset{n\rightarrow\infty}{\longrightarrow}0.
        \end{align}
    \end{enumerate}
\end{lemma}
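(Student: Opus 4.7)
The three parts call for arguments of increasing delicacy. For \textbf{(I)}, I would invoke uniqueness of the Cauchy problem for the linear ODE \eqref{hatODE} (the coefficients are Lipschitz by Assumption \ref{Assumption_1} and the hypothesis on $\widehat{\mu}$): if $\widehat{\psi}$ had a first zero at some $x_1 > 0$, then $\widehat{\psi}'(x_1) = 0$ would force $\widehat{\psi} \equiv 0$, contradicting $\widehat{\psi}'(0) = 1$, while $\widehat{\psi}'(x_1) < 0$ is incompatible with $\widehat{\psi}$ being the increasing fundamental solution. Hence $\widehat{\psi}(x) > 0$ on $(0, \infty)$, and then the ODE $\tfrac{1}{2}\sigma^2 \widehat{\psi}'' = r\widehat{\psi} - \widehat{\mu} \widehat{\psi}'$ combined with the same uniqueness argument rules out interior zeros of $\widehat{\psi}'$, giving $\widehat{\psi}' > 0$ throughout.

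For \textbf{(II)}, set $M(x) := \tfrac{1}{2}\sigma^2(x)\widehat{\psi}''(x) = r\widehat{\psi}(x) - \widehat{\mu}(x)\widehat{\psi}'(x)$ so that the sign of $\widehat{\psi}''$ matches that of $M$. Introduce the integrating factor $\beta(x) := \exp\bigl(\int_\kappa^x 2\widehat{\mu}(u)/\sigma^2(u)\,du\bigr)$ so that a direct computation (using $\widehat{\mu}\widehat{\psi}' = r\widehat{\psi} - M$ to cancel the $\widehat{\mu}\widehat{\psi}''$ term) gives $(\beta M)'(x) = \beta(x)(r - \widehat{\mu}'(x))\widehat{\psi}'(x)$ almost everywhere, where $\widehat{\mu}'$ exists a.e.\ by Lipschitz continuity. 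The strict-decrease assumption yields $\widehat{\mu}'(x) \leq r - c$ a.e.\ on $[\kappa, \infty)$, so $(\beta M)' \geq c\,\beta\,\widehat{\psi}' > 0$ and $\beta M$ is strictly increasing on $[\kappa, \infty)$; hence $M$ has at most one zero there and is strictly positive afterwards. To eliminate the possibility $M < 0$ throughout $[\kappa, \infty)$, observe that this would make $\widehat{\psi}$ concave on $[\kappa, \infty)$ and hence at most linear; on the other hand $M < 0$ forces $\widehat{\mu} > 0$ there (since $\widehat{\psi}, \widehat{\psi}' > 0$), and the inequality $\widehat{\psi}'/\widehat{\psi} > r/\widehat{\mu} \geq r/((r-c)x + C_1)$, with $C_1 = \widehat{\mu}(\kappa) - (r-c)\kappa$, integrates to $\widehat{\psi}(x) \gtrsim x^{r/(r-c)}$, which is strictly super-linear since $r/(r-c) > 1$---a contradiction.

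For \textbf{(III)}, by (II) the function $\widehat{\psi}$ is convex on $(b_1, \infty)$, so $\widehat{\psi}'$ is monotonically increasing there, converging to some $L \in (0, \infty]$. If $L = \infty$, L'Hopital's rule delivers $\widehat{\psi}(x)/x \to \infty$, giving (III). If instead $L < \infty$, then $\widehat{\psi}(x) \sim L x$; using $\widehat{\psi}'(x) \leq L$ together with $\widehat{\mu}(x) \leq (r-c)x + C_1$ (regardless of the sign of $\widehat{\mu}$) yields $\widehat{\mu}(x)\widehat{\psi}'(x) \leq (r-c)Lx + O(1)$, so the ODE gives $\tfrac{1}{2}\sigma^2(x)\widehat{\psi}''(x) \geq cLx + o(x)$. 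Combined with the quadratic growth $\sigma^2(x) = O(x^2)$ coming from Lipschitz continuity of $\sigma$, this produces $\widehat{\psi}''(x) \gtrsim 1/x$ for large $x$, contradicting the integrability $\int_{b_1}^\infty \widehat{\psi}''(y)\,dy = L - \widehat{\psi}'(b_1) < \infty$. Hence $L = \infty$ and the conclusion follows.

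The \textbf{main obstacle} is the dichotomy in (II). The integrating-factor trick elegantly converts the merely Lipschitz regularity of $\widehat{\mu}$ into monotonicity of $\beta M$, but ruling out $M < 0$ throughout $[\kappa, \infty)$ requires simultaneously leveraging the sub-linear upper bound on $\widehat{\mu}$ and the ODE-induced lower bound $\widehat{\psi}'/\widehat{\psi} > r/\widehat{\mu}$ to derive super-linear growth incompatible with concavity. The positive gap $c > 0$ in the hypothesis is essential: it is precisely what makes $r/(r-c) > 1$, thereby overpowering the linear ceiling enforced by concavity.
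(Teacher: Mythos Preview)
Your argument is correct and, for (I) and (III), parallels the paper's strategy (rule out a finite limit $L=\lim\widehat{\psi}'$ by showing $\widehat{\psi}''$ would fail to be integrable); your (III) is in fact slightly cleaner, bounding $\widehat{\psi}''(x)\gtrsim 1/x$ directly rather than extracting a subsequence $x_k$ with $x_k\widehat{\psi}''(x_k)\to 0$ as the paper does. Part (II) is where the routes genuinely diverge. The paper proceeds by hand: it rules out global concavity on $[\kappa,\infty)$ via a secant-bound estimate at a far point, and then shows by an $\varepsilon$-analysis of the ODE that any zero of $\widehat{\psi}''$ on $[\kappa,\infty)$ must be a crossing from negative to positive. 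Your integrating-factor identity $(\beta M)'=\beta(r-\widehat{\mu}')\widehat{\psi}'\ge c\,\beta\,\widehat{\psi}'>0$ replaces that local analysis with a one-line global monotonicity of $\beta M$, which is more elegant and makes the role of the gap $c>0$ transparent; the paper's approach, by contrast, never differentiates $\widehat{\mu}$ and is in that sense more elementary.

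One small gap to patch: your super-linearity step $\widehat{\psi}(x)\gtrsim x^{r/(r-c)}$ with $r/(r-c)>1$ tacitly assumes $r>c$, which the lemma does not impose. If $r\le c$, the constraint $0<\widehat{\mu}(x)\le(r-c)x+C_1$ either becomes impossible for large $x$ (when $r<c$) or gives $\widehat{\mu}\le C_1$ (when $r=c$), whence $\widehat{\psi}'/\widehat{\psi}>r/C_1$ integrates to exponential growth---either way the contradiction is immediate. The same caveat applies to the bound $\widehat{\mu}(x)\widehat{\psi}'(x)\le(r-c)Lx+O(1)$ ``regardless of the sign of $\widehat{\mu}$'' in (III): for $r\le c$ this is not literally true, but weakening $O(1)$ to $o(x)$ (or simply noting $M(x)\ge r\widehat{\psi}(x)$ once $\widehat{\mu}<0$) still yields $M(x)\ge\tfrac12 cLx$ for large $x$ and the argument goes through.
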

\begin{proof}
\begin{enumerate}[(I)]
    \item Note that if we would have $\widehat{\psi}'(x) = 0$ for some $x>0$, then the property $\widehat{\psi}'(x) \geq 0, x \geq 0$  would imply that $x$ is a local minimum of $\widehat{\psi}'$, implying in turn that $\widehat{\psi}''(x) = 0$, so that the ODE for $\widehat{\psi}$ (i.e., \eqref{hatODE}) and $\widehat{\psi}'(0)=1$, would yield the contradiction:
    \begin{equation*}
        0 = \frac{\sigma^{2}(x)}{2}\widehat{\psi}''(x) =
        -\widehat{\mu}(x)\widehat{\psi}'(x) + r\widehat{\psi}(x) = 
        r\widehat{\psi}(x)>0.
    \end{equation*}
    \item The proof consists in two steps: \textbf{1.} First we show that it cannot be the case that $\widehat{\psi}$ is strictly concave on $[\kappa, \infty)$, i.e., we show that we do not have $\widehat{\psi}''(x)<0,x \in [\kappa, \infty)$. We use a contradiction argument, in particular, suppose $\widehat{\psi}''(x)<0,x \in [\kappa, \infty)$, relying on our assumption for $\widehat{\mu}$ we select a constant $x_{1}\geq\kappa$ with $\widehat{\mu}(x_{1})<\left(r-\frac{c}{2}\right)x_{1}$, which implies that $\widehat{\mu}(x_{2})<\left(r-\frac{c}{2}\right)x_{2}$ for any $x_{2}\geq x_{1}$. Now use the contradiction assumption, the ODE and $\widehat{\psi}'(x)>0$ (i.e., (I)) to find that we for any $x_{2}>x_{1}$ have:
    \begin{align*}
        0>\frac{\sigma^{2}(x_{2})}{2}\widehat{\psi}''(x_{2}) &=
        r\widehat{\psi}(x_{2}) - \widehat{\mu}(x_{2})\widehat{\psi}'(x_{2})\\
        &\geq r\widehat{\psi}(x_{2}) - \max\left(0, \widehat{\mu}(x_{2})\right)\widehat{\psi}'(x_{2})\\
        &\geq r\widehat{\psi}(x_{2}) - \max\left(0, \widehat{\mu}(x_{2})\right)
        \frac{\widehat{\psi}(x_{2}) - \widehat{\psi}(x_{1})}{x_{2} - x_{1}}\\
        &\geq r\widehat{\psi}(x_{2}) - 
        \max\left(0, \left(r-\frac{c}{2}\right)x_{2}\right)
        \frac{\widehat{\psi}(x_{2}) - \widehat{\psi}(x_{1})}{x_{2} - x_{1}}.
    \end{align*}
    Multiplying both sides of the inequality by $(x_{2} - x_{1})$ gives, for any $x_{2}>x_{1}$:
    \begin{align*}
        0 &> r\widehat{\psi}(x_{2})\left(x_{2} - x_{1}\right)-
        \max\left(0, r-\frac{c}{2}\right)x_{2} \left(\widehat{\psi}(x_{2}) - \widehat{\psi}(x_{1})\right)\\
        &\geq r\widehat{\psi}(x_{2}) x_{2} - x_{1}r\widehat{\psi}(x_{2}) -\max\left(0, r-\frac{c}{2}\right)x_{2}\widehat{\psi}(x_{2})\\
        &= \widehat{\psi}(x_{2}) x_{2}\left(r - \max\left(0, r-\frac{c}{2}\right)\right)- x_{1}r\widehat{\psi}(x_{2})\\
        &= \widehat{\psi}(x_{2})\left(x_{2}\min\left(r,\frac{c}{2}\right) - x_{1}r\right).
    \end{align*}
    However, it is clear that the last line of the above is positive for sufficiently large $x_{2}$, and we have thus arrived at a contradiction.

    \textbf{2.} By \textbf{1.} above we have two cases: either $\widehat{\psi}''>0$ on $[\kappa,\infty)$ (in which case the statement of (II) holds with $b_1=\kappa$), or there exists at least one point $z\in [\kappa,\infty)$ with $\widehat{\psi}''(z)=0$. Using the conclusions of the following items (and continuity of $\psi''$) it is easy to see that statement in (II) holds also in the latter case (which thus concludes the proof):
    \begin{itemize}
        \item Using our assumption for $\widehat{\mu}$ and (I) above we find, for any $\varepsilon>0$, that
        \begin{align}\label{eq:aweqwre}
        \begin{split}
            \widehat{\mu}(z+\varepsilon)&<
            \widehat{\mu}(z)+(r-c)\varepsilon\\
            \implies\widehat{\mu}(z+\varepsilon)
            \widehat{\psi}'(z+\varepsilon)
            &<\widehat{\mu}(z)\widehat{\psi}'(z+\varepsilon)+
            \varepsilon(r-c)\widehat{\psi}'(z+\varepsilon).
        \end{split}
        \end{align}
        Using \eqref{eq:aweqwre}, the ODE and $\widehat{\psi}''(z)=0$ we obtain (in the last equality below we use that $r\widehat{\psi}(z) - \widehat{\mu}(z)\widehat{\psi}'(z)=0$ for our $z$):
        \begin{align*}
            \frac{\sigma^{2}(z+\varepsilon)}{2}\widehat{\psi}''(z+\varepsilon)&=
            r\widehat{\psi}(z+\varepsilon)- \widehat{\mu}(z+\varepsilon)
            \widehat{\psi}'(z+\varepsilon)\\
            &=r\widehat{\psi}(z) + r\int_{z}^{z+\varepsilon}\widehat{\psi}'(t)dt-
            \widehat{\mu}(z+\varepsilon)\widehat{\psi}'(z+\varepsilon)\\
            &>r\widehat{\psi}(z) + r\int_{z}^{z+\varepsilon}\widehat{\psi}'(t)dt-
            \widehat{\mu}(z)\widehat{\psi}'(z+\varepsilon)-
            \varepsilon(r-c)\widehat{\psi}'(z+\varepsilon)\\
            &= r\widehat{\psi}(z) - \widehat{\mu}(z)\widehat{\psi}'(z) + 
            r\int_{z}^{z+\varepsilon}\widehat{\psi}'(t)dt\\
            &-\widehat{\mu}(z)\left(\widehat{\psi}'(z+\varepsilon)-\widehat{\psi}'(z)\right)-\varepsilon(r-c)\widehat{\psi}'(z+\varepsilon)\\
            &= r\int_{z}^{z+\varepsilon}\widehat{\psi}'(t)dt
            -\widehat{\mu}(z)\left(\widehat{\psi}'(z+\varepsilon)-
            \widehat{\psi}'(z)\right)-\varepsilon(r-c)\widehat{\psi}'(z+\varepsilon).
        \end{align*}
        Hence, we find:
        \begin{align*}
            \frac{\sigma^{2}(z+\varepsilon)}{2\varepsilon}\widehat{\psi}''(z+\varepsilon) &>
            \frac{r}{\varepsilon}\int_{z}^{z+\varepsilon}\widehat{\psi}'(t)dt
            -\widehat{\mu}(z)\frac{ \widehat{\psi}'(z+\varepsilon)-
            \widehat{\psi}'(z)}{\varepsilon}-
            (r-c)\widehat{\psi}'(z+\varepsilon)\\
            &\overset{\varepsilon\searrow0}{\longrightarrow}
            r\widehat{\psi}'(z)-\ \widehat{\mu}(z)
            \widehat{\psi}''(z)-
            (r-c)\widehat{\psi}'(z)=c\widehat{\psi}'(z)>0.
        \end{align*}
        Since $\sigma>0$ (Assumption \ref{Assumption_1}) we thus conclude that $\widehat{\psi}''(z+\varepsilon)>0$ for any sufficiently small $\varepsilon>0$.
        \item Consider the case $z>\kappa$. Using arguments similar to those above we then have, for any $\varepsilon \in (0,z-\kappa)$:
        \begin{align*}
            \widehat{\mu}(z-\varepsilon)&>\widehat{\mu}(z)+(c-r)\varepsilon\\
            \implies\widehat{\mu}(z-\varepsilon)\widehat{\psi}'(z-\varepsilon)
            &>\widehat{\mu}(z)\widehat{\psi}'(z-\varepsilon)+
            \varepsilon(c-r)\widehat{\psi}'(z-\varepsilon),
        \end{align*}
        and:
        \begin{align*}
            \frac{\sigma^{2}(z-\varepsilon)}{2}\widehat{\psi}''(z-\varepsilon)&=
            r\widehat{\psi}(z-\varepsilon)-
            \widehat{\mu}(z-\varepsilon)\widehat{\psi}'(z-\varepsilon)\\
            &= r\widehat{\psi}(z) - r\int_{z-\varepsilon}^{z}\widehat{\psi}'(t)dt-
            \widehat{\mu}(z-\varepsilon)\widehat{\psi}'(z-\varepsilon)\\
            &< r\widehat{\psi}(z) - r\int_{z-\varepsilon}^{z}\widehat{\psi}'(t)dt-
            \widehat{\mu}(z)\widehat{\psi}'(z-\varepsilon)-
            \varepsilon(c-r)\widehat{\psi}'(z-\varepsilon)\\
            &=r\widehat{\psi}(z) - r\int_{z-\varepsilon}^{z}\widehat{\psi}'(t)dt-
            \widehat{\mu}(z)\widehat{\psi}'(z)-
            \widehat{\mu}(z)\left(\widehat{\psi}'(z-\varepsilon)-\widehat{\psi}'(z)\right)-
            \varepsilon(c-r)\widehat{\psi}'(z-\varepsilon)\\
            &=-r\int_{z-\varepsilon}^{z}\widehat{\psi}'(t)dt -
            \widehat{\mu}(z)\left(\widehat{\psi}'(z-\varepsilon)-\widehat{\psi}'(z)\right)-
            (c - r)\varepsilon\widehat{\psi}'(z-\varepsilon),
        \end{align*}
        so that:
        \begin{align*}
            \frac{\sigma^{2}(z-\varepsilon)}{2\varepsilon}\widehat{\psi}''(z-\varepsilon) &
            <
            -\widehat{\mu}(z)\frac{\widehat{\psi}'(z-\varepsilon) - 
            \widehat{\psi}'(z)}{\varepsilon} -
            \frac{r}{\varepsilon}\int_{z-\varepsilon}^{z}\widehat{\psi}'(t)dt -
            (c - r)\widehat{\psi}'(z-\varepsilon)\\
            &\overset{\varepsilon\searrow0}{\longrightarrow}
            \widehat{\mu}(z)\widehat{\psi}''(z)
            -r\widehat{\psi}'(z)-(c - r)\widehat{\psi}'(z) = -c\widehat{\psi}'(z)<0.
        \end{align*}
        We conclude that $\widehat{\psi}''(z-\varepsilon)<0$ for all sufficiently small $\varepsilon>0$, in case $z>\kappa$.
    \end{itemize}
    \item From (I) and (II) we know that $\widehat{\psi}$ is increasing and ultimately convex ($\widehat{\psi}''(x)>0$ for all sufficiently large $x$), so that:
    \begin{equation*}
        \lim_{x\rightarrow\infty}\frac{\widehat{\psi}(x)}{x}=
        \lim_{x\rightarrow\infty}\widehat{\psi}'(x)=C,
    \end{equation*}
    for some $C\in(0, \infty]$. Let us now show that $C=\infty$, so that the above equality gives \eqref{Lemma_that_was_previously_an_assumption:eq}, which thus concludes the proof. Assume towards contradiction that $C\in(0, \infty)$, and use our assumption for $\widehat{\mu}$ and the ODE for $\widehat{\psi}$ to see that there exists a constant $A$ such that, for all $x>\kappa$, we have
    \begin{align*}
        \frac{\sigma^{2}(x)}{x}\widehat{\psi}''(x)&=
        r\frac{\widehat{\psi}(x)}{x}-\frac{\widehat{\mu}(x)}{x}\widehat{\psi}'(x)\\
        &\geq r\frac{\widehat{\psi}(x)}{x}-\frac{(r - c)x + A}{x}\widehat{\psi}'(x)
        \overset{x\rightarrow\infty}{\longrightarrow}
        rC -(r-c)C=cC>0.
    \end{align*}
    We also find that:
    \begin{equation*}
        \infty>C=\lim_{x\rightarrow\infty}\widehat{\psi}'(x) =
        \widehat{\psi}'(0) + \int_{0}^{\infty}\widehat{\psi}''(u)du,
    \end{equation*}
    which implies that there exists a sequence $(x_{k})_{k\in\mathbb{N}}$ increasing to infinity with:
    \begin{align*}
       \lim_{k\rightarrow\infty}\widehat{\psi}''(x_{k})x_{k}=0.
    \end{align*}
    Now use Assumption \ref{Assumption_1} to see that:
    \begin{align}\label{limit_sigmasq_over_xsq}
        \limsup_{y\rightarrow\infty}\frac{\sigma^{2}(x)}{x^{2}}<\infty.
    \end{align}
    Putting the above pieces together we find a contradiction:
    \begin{align*}
       0<cC\leq \lim_{k\rightarrow\infty}
        \frac{\sigma^{2}(x_{k})}{x^{2}_{k}}\widehat{\psi}''(x_{k})x_{k}=0.
    \end{align*}
\end{enumerate}
\end{proof}
\begin{lemma}\label{Lemma_that_I_don't_know_what_to_call}
    Suppose Assumptions \ref{Assumption_1} and \ref{Assumption_2} hold. Recall the function $\psi$ defined in the beginning of Section \ref{sec:Naive_Lebesgue_absolutely_continuous_Nash_equilibria}. 
    \begin{enumerate}[(I)]
        \item $\psi'(x)>0$, for $x\geq0$. 
        \item There exists a unique point $b_2\in [\kappa,\infty)$ such that $\psi''(x) > 0, x\in \left(\kappa,b_2\right)$, $\psi''(x) < 0, x\in \left(b_2,\infty\right)$; in particular, $\psi$ is concave or convex-concave when restricted to  $[\kappa,\infty)$. (Recall the constant $\kappa$ from Assumption \ref{Assumption_2}). 
        \item \begin{align}\label{Lemma_that_I_don't_know_what_to_call:eq}
            \psi'(n)\overset{n\rightarrow\infty}{\longrightarrow}0.
        \end{align}
    \end{enumerate}
\end{lemma}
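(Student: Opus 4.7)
The plan is to mirror the proof of Lemma~\ref{Lemma_that_was_previously_an_assumption} with all inequalities reversed, to account for the fact that Assumption~\ref{Assumption_2} requires $\mu(x)-(r+c)x$ to be strictly \emph{increasing} on $[\kappa,\infty)$, whereas in Lemma~\ref{Lemma_that_was_previously_an_assumption} the relevant quantity $\widehat\mu(x)-(r-c)x$ was strictly \emph{decreasing}. Part (I) then follows verbatim from the argument of Lemma~\ref{Lemma_that_was_previously_an_assumption}(I): if $\psi'(x_0)=0$ at some $x_0>0$, then (since $\psi$ is the non-negative increasing solution and thus $\psi'\geq0$) the point $x_0$ is a local minimum of $\psi'$, which forces $\psi''(x_0)=0$, and the ODE~\eqref{eq:ode-for-psi} reduces to $r\psi(x_0)=0$, contradicting $\psi'(0)=1$ together with the monotonicity of $\psi$.

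For (II) I would rerun the two-step contradiction of Lemma~\ref{Lemma_that_was_previously_an_assumption}(II) with signs flipped. In \textbf{Step 1}, I would rule out $\psi''>0$ on all of $[\kappa,\infty)$ as follows: Assumption~\ref{Assumption_2} yields an $x_1\geq\kappa$ with $\mu(x_2)>(r+\tfrac{c}{2})x_2$ for every $x_2\geq x_1$, and the hypothesis $\psi''>0$ gives the convexity bound $\psi'(x_2)\geq(\psi(x_2)-\psi(x_1))/(x_2-x_1)$; plugging both into the ODE produces an inequality whose leading (quadratic in $x_2$) term, $-\tfrac{c}{2}x_2\psi(x_2)$, has the wrong sign for large $x_2$, since convexity forces $\psi(x_2)$ to grow at least linearly. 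In \textbf{Step 2}, at any zero $z\in[\kappa,\infty)$ of $\psi''$ I would mimic the Taylor-style expansion in Lemma~\ref{Lemma_that_was_previously_an_assumption}(II), again using Assumption~\ref{Assumption_2} to bound $\mu(z\pm\varepsilon)$; the resulting limit is $-c\psi'(z)<0$ just above $z$ and $+c\psi'(z)>0$ just below $z$ (when $z>\kappa$), so $\psi''$ crosses from $+$ to $-$ across each zero. Together with Step 1 this forces at most one such zero, giving the unique $b_2$.

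For (III), part (II) implies that $\psi'$ is decreasing on $[b_2,\infty)$, so $L:=\lim_{n\to\infty}\psi'(n)\in[0,\psi'(b_2)]$ exists. Suppose for contradiction that $L>0$. Then $\psi(x)/x\to L$, and Assumption~\ref{Assumption_2} gives $\mu(x)\geq(r+c)x+K$ for some constant $K$, hence $\mu(x)/x\to r+c$ from below; combining with $\psi'(x)\geq L$ and inserting into the ODE gives, for any small $\varepsilon>0$ and all sufficiently large $x$,
\[
\psi''(x)=\frac{2}{\sigma^2(x)}\bigl(r\psi(x)-\mu(x)\psi'(x)\bigr)\leq \frac{2}{\sigma^2(x)}\bigl[r(L+\varepsilon)-(r+c-\varepsilon)L\bigr]x\leq -\frac{cL}{Mx},
\]
where $M>0$ is an eventual upper bound on $\sigma^2(x)/x^2$ coming from Assumption~\ref{Assumption_1}. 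Integrating from some large $x_0$ to $N$ yields $\psi'(N)\leq \psi'(x_0)-(cL/M)\log(N/x_0)\to-\infty$, contradicting $\psi'\geq0$; therefore $L=0$.

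I expect the main obstacle to be the bookkeeping in Step~1 of (II), namely verifying that after all the sign flips the final polynomial inequality in $x_2$ really does fail for large $x_2$, and that the at-least-linear growth estimate on $\psi(x_2)$ obtained from $\psi''>0$ is in fact strong enough to make the leading term dominate. Step~2 of (II) and the contradiction estimate for (III) are new in form but ultimately amount to routine ODE comparisons.
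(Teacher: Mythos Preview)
Your proposal is correct, and parts (I) and (II) match the paper's argument essentially line for line (the paper's Step~1 of (II) cites the standard fact $\psi(x)\to\infty$ rather than deducing linear growth from convexity, but these are interchangeable here). One small slip: the claim ``$\mu(x)/x\to r+c$ from below'' is not implied by Assumption~\ref{Assumption_2}, which only gives the lower bound $\mu(x)\geq(r+c)x+K$; fortunately that lower bound is exactly what you use in the displayed estimate, so the argument is unaffected.

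Your argument for (III) differs from the paper's. The paper assumes $L=\lim\psi'(x)\in(0,\infty)$ and derives two incompatible facts: from the ODE and Assumption~\ref{Assumption_2} it gets $\limsup_{x\to\infty}\frac{\sigma^2(x)}{x}\psi''(x)\leq -cL<0$, while from $\int_0^\infty\psi''(u)\,du=L-\psi'(0)\in(-\infty,0)$ it extracts a sequence $x_k\to\infty$ with $x_k\psi''(x_k)\to0$, and then the boundedness of $\sigma^2(x)/x^2$ forces $\frac{\sigma^2(x_k)}{x_k}\psi''(x_k)\to0$, a contradiction. Your route is more direct: you bound $\psi''(x)\leq -\text{const}/x$ pointwise for large $x$ and integrate to send $\psi'(N)\to-\infty$, violating $\psi'\geq0$. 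Your version avoids the subsequence extraction and is arguably cleaner; the paper's version has the mild advantage of isolating the role of the growth condition on $\sigma$ into a single limsup statement, but both use the same ingredients (the ODE, Assumption~\ref{Assumption_2}, and Lipschitz growth of $\sigma$) and neither is materially harder than the other.
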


\begin{proof}
We remark that the proof of the present result is similar to the proof of Lemma 
\ref{Lemma_that_was_previously_an_assumption}.
\begin{enumerate}[(I)]
    \item This claim can be proved using the arguments in the proof of Lemma  \ref{Lemma_that_was_previously_an_assumption}(I).
    \item Recall the constants $c$ and $\kappa$ from Assumption \ref{Assumption_2}. The proof consists of two steps: \textbf{1.} First we show that it cannot be the case that $\psi$ is strictly convex on $[\kappa, \infty)$, i.e., we show that we do not have $\psi''(x)>0,x \in [\kappa, \infty)$. We use a contradiction argument; in particular, suppose $\psi''(x)>0,x \in [\kappa, \infty)$. 
    Select a constant  $x_{1}\geq\kappa$ so that $\mu(x_{1})>\left(r+\frac{c}{2}\right)x_{1}$, which also implies that 
    $\mu(x_{2})>\left(r+\frac{c}{2}\right)x_{2}$ for any $x_{2}\geq x_{1}$
    (use Assumption \ref{Assumption_2}). Now use the contradiction assumption, the ODE for $\psi$ (i.e., \eqref{eq:ode-for-psi}), and $\psi'(x)>0$ (i.e., (I)) to find that we for any $x_{2}>x_{1}$ have:
    \begin{align*}
     0<\frac{\sigma^{2}(x_{2})}{2}\psi''(x_{2}) 
    &= r\psi(x_{2}) - \mu(x_{2})\psi'(x_{2})\\
    &\leq r\psi(x_{2}) - \mu(x_{2})\frac{\psi(x_{2}) - \psi(x_{1})}{x_{2} - x_{1}}\\
    &\leq r\psi(x_{2}) - \left(r+\frac{c}{2}\right)x_{2} \frac{\psi(x_{2}) - \psi(x_{1})}{x_{2} - x_{1}}.
    \end{align*}
    Multiplying both sides of the inequality by $(x_{2} - y_1)$ yields:
    \begin{align*}
         0&<r\psi(x_{2})(x_{2}-x_{1}) - 
         \left(r+\frac{c}{2}\right)x_{2} \left(\psi(x_{2}) - \psi(x_{1})\right) \\
         &= x_2\left(-\frac{c}{2}\psi(x_{2})+\frac{c}{2}\psi(x_{1})+r\psi(x_{1})\right)
         -rx_{1}\psi(x_{2}).
    \end{align*}
    However, for large $x_{2}$ we have that the last line above is negative (since $\displaystyle\lim_{x\rightarrow\infty}\psi(x)=\infty$, cf. e.g., \cite[p. 18-19]{borodin2012handbook}), which is a contradiction.

    \textbf{2.} By \textbf{1.} above we have two cases remaining: either $\psi''<0$ on $[\kappa,\infty)$ (in which case the statement in (II) holds with $b_2=\kappa$), or there exists at least one point $z\in [\kappa,\infty)$ with $\psi''(z)=0$. Using the conclusions of the following items (and continuity of $\psi''$) it is easy to see that the statement in (II) holds also in the latter case (which thus concludes the proof):
    \begin{itemize} 
        \item Using Assumption \ref{Assumption_2} and the result (I) of the present lemma, we find, for any $\varepsilon>0$, that
        \begin{align}\label{eq:aweqwre-ver2}
        \begin{split}
            \mu(z+\varepsilon)&>\mu(z)+(r+c)\varepsilon\\
            \implies\mu(z+\varepsilon)\psi'(z+\varepsilon)&>
            \mu(z)\psi'(z+\varepsilon)+\varepsilon(r+c)\psi'(z+\varepsilon).
        \end{split}
        \end{align}
        Using the above, the ODE and $\psi''(z)=0$ we obtain (in the last equality below we use also the ODE to see that $r\psi(z) - \mu(z)\psi'(z)=0$):
        \begin{align*}
            \frac{\sigma^{2}(z+\varepsilon)}{2}\psi''(z+\varepsilon)&=
            r\psi(z+\varepsilon)-\mu(z+\varepsilon)\psi'(z+\varepsilon)\\
            & = r\psi(z) + r\int_{z}^{z+\varepsilon}\psi'(t)dt-
            \mu(z+\varepsilon)\psi'(z+\varepsilon)\\
            &<r\psi(z) + r\int_{z}^{z+\varepsilon}\psi'(t)dt-
            \mu(z)\psi'(z+\varepsilon)-
            \varepsilon(r+c)\psi'(z+\varepsilon)\\
            &= r\psi(z) - \mu(z)\psi'(z) + 
            r\int_{z}^{z+\varepsilon}\psi'(t)dt-
            \mu(z)\left(\psi'(z+\varepsilon)-\psi'(z)\right)-
            \varepsilon(r+c)\psi'(z+\varepsilon)\\
            &= r\int_{z}^{z+\varepsilon}\psi'(t)dt
            -\mu(z)\left(\psi'(z+\varepsilon)-\psi'(z)\right)-
            \varepsilon(r+c)\psi'(z+\varepsilon).
        \end{align*}
      Hence, we find:
        \begin{align*}
            \frac{\sigma^{2}(z+\varepsilon)}{2\varepsilon}\psi''(z+\varepsilon) &<
            \frac{r}{\varepsilon}\int_{z}^{z+\varepsilon}\psi'(t)dt
            -\mu(z)
            \frac{\psi'(z+\varepsilon)-\psi'(z)}{\varepsilon}-
            (r+c)\psi'(z+\varepsilon)\\
            &\overset{\varepsilon\searrow0}{\longrightarrow}
            r\psi'(z)-\mu(z)\psi''(z)-(r+c)\psi'(z)=-c\psi'(z)<0.
        \end{align*}
        Since $\sigma>0$ we thus conclude that $\psi''(z+\varepsilon)<0$ for any sufficiently small $\varepsilon>0$.         
        \item Consider the case $z>\kappa$. Using arguments similar to those above we have, for any  $\varepsilon \in (0,z-\kappa)$:
        \begin{align}\label{eq:aweqwre_2-ver2}
        \begin{split}
            \mu(z-\varepsilon)+\varepsilon(r+c)&<\mu(z)\\
            \implies
            \mu(z-\varepsilon)\psi'(z-\varepsilon) &<
            \mu(z)\psi'(z-\varepsilon) - \varepsilon(r+c)\psi'(z-\varepsilon),
        \end{split}
        \end{align}
        and:
        \begin{align*}
            \frac{\sigma^{2}(z-\varepsilon)}{2}\psi''(z-\varepsilon)&=
            r\psi(z-\varepsilon)-\mu(z-\varepsilon)\psi'(z-\varepsilon)\\
            & = r\psi(z) - r\int_{z-\varepsilon}^{z}\psi'(t)dt-
            \mu(z-\varepsilon)\psi'(z-\varepsilon)\\
            &> r\psi(z) - r\int_{z-\varepsilon}^{z}\psi'(t)dt-
            \mu(z)\psi'(z-\varepsilon)+
            \varepsilon(c+r)\psi'(z-\varepsilon)\\
            &=r\psi(z) - r\int_{z-\varepsilon}^{z}\psi'(t)dt-
            \mu(z)\psi'(z)-\mu(z)\left(\psi'(z-\varepsilon)-\psi'(z)\right)+
            \varepsilon(c+r)\psi'(z-\varepsilon)\\
            &=  - r\int_{z-\varepsilon}^{z}\psi'(t)dt -
            \mu(z)\left(\psi'(z-\varepsilon)-\psi'(z)\right)+
            (c + r)\varepsilon\psi'(z-\varepsilon),
        \end{align*}
        so that:
        \begin{align*}
            \frac{\sigma^{2}(z-\varepsilon)}{2\varepsilon}\psi''(z-\varepsilon) &>
            -\mu(z)\frac{\psi'(z-\varepsilon) - \psi'(z)}{\varepsilon}-
            \frac{r}{\varepsilon}\int_{z-\varepsilon}^{z}\psi'(t)dt
            +(c + r)\psi'(z-\varepsilon)\\
            &\overset{\varepsilon\searrow0}{\longrightarrow}
            -r\psi'(z)+(c + r)\psi'(z) = c\psi'(z)>0.
        \end{align*}
        We conclude that $\psi''(z-\varepsilon)>0$ for all sufficiently small $\varepsilon>0$, in case $z>\kappa$.
    \end{itemize}
    \item From (I) and (II) we know that $\psi$ is increasing and ultimately concave ($\psi''(x)<0$ for all sufficiently large $x$), so that:
    \begin{equation*}
        \lim_{x\rightarrow\infty}\frac{\psi(x)}{x}=
        \lim_{x\rightarrow\infty}\psi'(x)=C,
    \end{equation*}
    for some $C\in[0, \infty)$. Let us now show that $C=0$, so that the above equality gives \eqref{Lemma_that_I_don't_know_what_to_call:eq}, which concludes the proof. Assume towards contradiction that $C\in(0, \infty)$ and use Assumption \ref{Assumption_2} and the ODE for $\psi$ to see that there exists a constant $A$ such that, for all $x>\kappa$, we have
    \begin{align*}
        \frac{\sigma^{2}(x)}{x}\psi''(x)&=
        -\frac{\mu(x)}{x}\psi'(x)+r\frac{\psi(x)}{x}\\
        &\leq r\frac{\psi(x)}{x}-\frac{(r + c)x + A}{x}\psi'(x)
        \overset{x\rightarrow\infty}{\longrightarrow}
        rC-(r+c)C=-cC<0.
    \end{align*}
    We also find that:
    \begin{equation*}
        \infty>C=\lim_{x\rightarrow\infty}\psi'(x) = \psi'(0)  + \int_{0}^{\infty}\psi''(u)du,
    \end{equation*}
    which implies that there exists a sequence $(x_{k})_{k\in\mathbb{N}}$ increasing to infinity with $\lim_{k\rightarrow\infty}\psi''(x_{k})x_{k}=0$.  Using the above together with  \eqref{limit_sigmasq_over_xsq} we find a contradiction:
    \begin{align*}
       0>-cC\geq \lim_{k\rightarrow\infty}
        \frac{\sigma^{2}(x_{k})}{x^{2}_{k}}\psi''(x_{k})x_{k}=0.
    \end{align*}
\end{enumerate}
\end{proof}

\begin{lemma}\label{Cor:concave-psi} 
    Suppose Assumption \ref{Assumption_1} holds. 
    Consider a fixed constant $b\geq 0$. If $x\mapsto\mu(x)-rx$ is non-decreasing on $[0, b)$ and $\mu(0)\geq0$, then the function $\psi$ (defined in the beginning of Section \ref{sec:Naive_Lebesgue_absolutely_continuous_Nash_equilibria}) is concave on $[0, b)$.
\end{lemma}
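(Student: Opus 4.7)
The plan is to introduce the auxiliary function $h(x) := \mu(x)\psi'(x) - r\psi(x)$ on $[0,b)$ and show $h(x) \geq 0$ throughout. Since the ODE \eqref{eq:ode-for-psi} rearranges as $h(x) = -\frac{\sigma^{2}(x)}{2}\psi''(x)$, the inequality $h\geq 0$ is equivalent to $\psi''\leq 0$ on $[0,b)$, which is exactly the desired concavity (recall $\sigma>0$ by Assumption \ref{Assumption_1}).

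At the initial point, $h(0) = \mu(0)\psi'(0) - r\psi(0) = \mu(0) \geq 0$ by hypothesis. Moreover, since $x\mapsto\mu(x)-rx$ is non-decreasing on $[0,b)$ with $\mu(0)\geq 0$, we have $\mu(x)\geq\mu(0)+rx\geq 0$ throughout $[0,b)$, a fact used below.

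Because $\mu$ is globally Lipschitz (Assumption \ref{Assumption_1}) and $\psi\in\mathcal{C}^2[0,\infty)$, both $\mu$ and $\psi'$ are absolutely continuous on compact subintervals of $[0,b)$, and hence so is $h$. Differentiating a.e.\ and using the ODE in the form $\mu(x)\psi''(x)=-\frac{2\mu(x)}{\sigma^{2}(x)}h(x)$ yields
\begin{equation*}
    h'(x) = (\mu'(x)-r)\psi'(x) - \frac{2\mu(x)}{\sigma^{2}(x)}h(x) \quad \text{a.e.}
\end{equation*}
The assumption that $x\mapsto\mu(x)-rx$ is non-decreasing implies $\mu'(x)\geq r$ a.e.\ on $[0,b)$, and Lemma \ref{Lemma_that_I_don't_know_what_to_call}(I) gives $\psi'(x)>0$, so
\begin{equation*}
    h'(x) + \frac{2\mu(x)}{\sigma^{2}(x)}h(x) \geq 0 \quad \text{a.e.}
\end{equation*}

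To conclude, I multiply by the integrating factor $F(x):=\exp\!\left(\int_0^x \frac{2\mu(s)}{\sigma^{2}(s)}\,ds\right)$, which is well-defined because on any compact subinterval of $[0,b)$ the integrand is bounded (since $\sigma$ is continuous and strictly positive and $\mu$ is continuous). Then $hF$ is absolutely continuous with $(hF)'=(h'+\frac{2\mu}{\sigma^{2}}h)F\geq 0$ a.e., so $hF$ is non-decreasing, and therefore $h(x)F(x)\geq h(0)F(0) = \mu(0) \geq 0$. Since $F>0$, this gives $h(x)\geq 0$ on $[0,b)$, completing the argument. The only subtle part is carefully justifying the a.e.\ differentiation of the product $\mu\psi'$ and the Gronwall-type integrating-factor step under mere Lipschitz regularity of $\mu$; I expect this to be a routine appeal to absolute continuity rather than a genuine obstacle.
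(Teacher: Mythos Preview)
Your proof is correct and takes a genuinely different route from the paper's. The paper argues by contradiction: it notes $\psi''(0)\leq 0$, assumes concavity fails at some first point $x\in[0,b)$ (so $\psi''(x)=0$ and $\psi''>0$ on $(x,x+\varepsilon]$), and then compares the ODE at $x$ and at $x+\varepsilon$ via an integral estimate to reach a contradiction---without ever differentiating $\mu$. Your argument instead differentiates the auxiliary function $h=\mu\psi'-r\psi$ almost everywhere and applies an integrating-factor (Gronwall) step. Your approach is more systematic and arguably cleaner once one is comfortable with absolute continuity; the paper's approach is more elementary in that it only uses the monotonicity hypothesis in its integrated form, avoiding a.e.\ derivatives of $\mu$ altogether.

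Two minor remarks. First, your citation of Lemma~\ref{Lemma_that_I_don't_know_what_to_call}(I) for $\psi'>0$ is formally off: that lemma is stated under both Assumptions~\ref{Assumption_1} and~\ref{Assumption_2}, whereas the present lemma assumes only Assumption~\ref{Assumption_1}. The fact $\psi'>0$ is nonetheless true here (the proof of that item does not actually invoke Assumption~\ref{Assumption_2}), and in any case $\psi'\geq 0$---which is all your inequality needs---holds by the very definition of $\psi$ as the increasing fundamental solution. Second, you flag that $\mu\geq 0$ on $[0,b)$ is ``used below,'' but your integrating-factor argument goes through regardless of the sign of $\mu$; it is the paper's contradiction proof, not yours, that genuinely relies on $\mu(x)\geq 0$ in its final inequality.
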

\begin{proof}
Using the definition of $\psi$ and $\mu(0) \geq 0$ we find $\displaystyle \psi''(0)=-\frac{2}{\sigma^2(0)} \mu(0)\leq0$. Assume towards a contradiction that $\psi$ is not concave on $[0,b)$. This implies that we should have an $x\in [0,b)$ with $\psi''(x)=0$ and $\psi''>0$ on $(x, x+\varepsilon]$ for some small $\varepsilon>0$ with  $x+\varepsilon \in (x,b)$. Hence, using also the non-decreasingness condition in the statement of the lemma, we find the following:
    \begin{align*}
        \frac{\sigma^2(x+\varepsilon)}{2}\psi''(x+\varepsilon)  
        &= r\psi(x+\varepsilon)- 
        \mu(x+\varepsilon)\psi'(x+\varepsilon), \text{ and }\\
        -r\psi(x) + \mu(x)\psi'(x) &= -\frac{\sigma^2(x)}{2}\psi''(x) = 0\\
        \implies0<\frac{\sigma^2(x+\varepsilon)}{2}\psi''(x+\varepsilon)&=
        r\psi(x+\varepsilon)-r\psi(x)-\mu(x+\varepsilon)\psi'(x+\varepsilon)+\mu(x)\psi'(x)\\
        &=r\int_{x}^{x+\varepsilon}\psi'(u)du-
        \mu(x+\varepsilon)\psi'(x+\varepsilon)+\mu(x)\psi'(x)\\
        &\leq r\varepsilon\psi'(x+\varepsilon)-
        \mu(x+\varepsilon)\psi'(x+\varepsilon)+\mu(x)\psi'(x)\\
        &\leq -\mu(x)\psi'(x+\varepsilon)+\mu(x)\psi'(x)\leq0.
    \end{align*}
Thus, we have arrived at a contradiction.
\end{proof}

\section{Proof of Theorem \ref{Example-with-jumps}}\label{thmproofappendix}
It is immediately verified that the strategies $D^1$ and $D^2$ are admissible (i.e., belonging to $\mathbb{L}$). As in Section \ref{subsec:proof_g_verification}, it suffices to show, for any given $X_{0-}=x>0$, that one of the players, say player $1$, does not gain anything by deviating from $(D^1,D^2)$ (i.e., $J_{1}(x;D, D^2) \leq V_{\ell}(x)$ for any $D \in \mathbb{L}$) and that the expected reward corresponding the strategy tuple $(D^1, D^2)$ satisfies $J_1\left(x;D^1, D^2\right)=V_{\ell}(x)$, where $V_{\ell}$ is given by \eqref{V_b:case-study}.
    
We prove these claims in several steps:
\begin{enumerate}[(I)]
    \item Here we consider $x\in(0, \ell]$ and show that  $J_{1}(x;D^1, D^2)=V_{\ell}(x)$. The existence of a unique strong solution of the SDE corresponding to the candidate equilibrium strategy:
    \begin{equation*}
        dX_{t} = \mu(X_{t})dt + \sigma(X_{t})dW_{t}-
        dL_{t}^{\ell}(X),\enskip X_{0-}=x,\enskip 0\leq t\leq\tau,
    \end{equation*}
    holds by arguments analogous to arguments in Section \ref{subsec:proof_g_verification}. This SDE solution, i.e., the controlled process in equilibrium, is reflected at (the upper barrier) $\ell$; to see this use e.g., \cite[Theorem 3.3]{bass2005one} and \cite[Theorem 2.2]{blei2013note}. Using this, \eqref{V_b:case-study} and the notation $\tau_{n}:=\inf\{t\geq0:X_{t}\geq n\}$, we obtain:
    \begin{align*}
        \E_x\left[e^{-r(\tau\land\tau_n\land n)}V_{\ell}(X_{\tau\land\tau_n\land n})\right]&\leq
        \limsup_{n\rightarrow\infty}
        \E_x\left[e^{-r(\tau_n\land n)}V_{\ell}(\ell)\1_{\{\tau=\infty\}}\right]+
        \E_x\left[e^{-r\tau}V_{\ell}(X_{\tau})\1_{\{\tau<\infty\}}\right]=0.
    \end{align*}
    Note that the above holds since $X_\tau\1_{\{\tau<\infty\}}=0$ and $V_{\ell}(0)=0$ and $\tau_{n}=\infty$ a.s. for large $n$ (recall that we have reflection at $\ell$). Using the definition of $g(\ell)$ and $V_\ell$ we find:
    \begin{equation}
        \frac{V'_{\ell}(\ell+) - V'_{\ell}(\ell-)}{2} - 
        \frac{V'_{\ell}(\ell+)+V'_{\ell}(\ell-)}{2} =
        -\frac{g(\ell)+g(\ell-)}{4},
    \end{equation}
    (which we remark is analogous to the observation \eqref{eq:proof-skew-point-calculation2}). Using the observations above we can now prove $J_{1}(x;D^1, D^2)=V_{\ell}(x)$ for $x\in(0, \ell]$ using arguments based on the It\^{o} formula analogous to those in the proof of Theorem \ref{Theorem_verification_g_NE_2} (cf. the arguments between \eqref{ito-for-other-proof} and \eqref{proofhelp}, with $\lambda^{\ast}_{b}\equiv0$ and $l_{j}=\ell, c_j=\frac{1}{2}$).        \item Here we consider $x\in(0, \ell]$ and show that  $J_{1}(x;D, D^2) \leq V_{\ell}(x)$ for any $D \in \mathbb{L}$.
    The SDE associated with a strategy pair $(D,D^2)$ is:
    \begin{equation*}
        dX_{t} = \mu(X_{t})dt + \sigma(X_{t})dW_{t}-
        \frac{1}{2}dL_{t}^{\ell}(X) - \Delta D_t^2 - dD_t,\enskip X_{0-}=x , 
        \enskip 0 \leq t \leq \tau.
    \end{equation*}
    It suffices to consider the case that this SDE has a unique strong solution (as in the proof of Theorem \ref{Theorem_verification_g_NE_2}). The It\^{o} formula yields:
    \begin{align*}
        e^{-r(\tau_{n}\land\tau\land n)}V_{\ell}(X_{(\tau_{n}\land\tau\land n)-})
        &=V_{\ell}(x) +\int_{0}^{\tau\land\tau_{n}\land n}e^{-rs}
        V'_{\ell}(X_{s})\sigma(X_{s})\1_{\{X_s \neq \ell\}}dW_{s}\\
        &+\int_{0}^{\tau\land\tau_{n}\land n}e^{-rs}
        \left(\left(A_X-r\right)V_{\ell}(X_s)\right)\1_{\{X_s \neq \ell\}}ds\\
        &+\int_{0}^{\tau\land\tau_{n}\land n}e^{-rs}
        \frac{V'_{\ell}(\ell+)-V'_{\ell}(\ell-)}{2}dL^{\ell}_{s}(X)\\
        &-\int_{0}^{\tau\land\tau_{n}\land n}e^{-rs}
        \frac{V'_{\ell}(\ell+)+V'_{\ell}(\ell-)}{4}dL_{s}^{\ell}(X)\\
        &-\int_{0}^{\tau\land\tau_{n}\land n}e^{-rs}
        \frac{V'_{\ell}(X_{s-}+)+V'_{\ell}(X_{s-}-)}{2}dD_s^c\\
        &-\sum_{0\leq s<\tau_n\land\tau\land n}e^{-rs}
        \left(V_{\ell}(X_{s-})-V_{\ell}\left(X_{s-} - \Delta D_s- \Delta D_s^2\right)\right).
    \end{align*}
    Note that the process can only exceed $\ell$ at time $0-$ (cf. the strategy of the non-deviating player, i.e., $D^2$, which dictates jumping above $\ell$) and that, by definition of $V_{\ell}$, we have:
    \begin{equation*}
        \left(A_X-r\right)V_{\ell}(y) = 0, \enskip y \in (0,\ell),
    \end{equation*}
    so that the Lebesgue integral in It\^{o} formula vanishes. Note that Lemma \ref{Cor:concave-psi} and \eqref{eq:g-case-study} yields:
    \begin{equation}\label{help-stuff}
        V_{\ell}'(y)\geq g(y), \enskip y\in[0, \ell). 
    \end{equation}
    
    Using these observations as well as $V_{\ell}'(\ell+)=0$ we find:
    \begin{align*}
        V_{\ell}(x)
        &=\E_x\left[e^{-r(\tau_{n}\land\tau\land n)}
        V_{\ell}(X_{(\tau_{n}\land\tau\land n)-})\right]\\
        &+\E_x\left[
        \int_{0}^{\tau\land\tau_{n}\land n}e^{-rs}\1_{\{X_{s-}=X_{s}=\ell\}}
        \left(\frac{V'_{\ell}(X_{s-}+)+V'_{\ell}(X_{s-}-)}{2}dD_s^c +
        \frac{3V'_{\ell}(\ell-)}{4}dL^{\ell}_{s}(X)\right)\right]\\
        &+\E_x\left[
        \int_{0}^{\tau\land\tau_{n}\land n}e^{-rs}
        \frac{V'_{\ell}(X_{s-}+)+V'_{\ell}(X_{s-}-)}{2}(1 - \1_{\{X_{s-}=X_{s}=\ell\}})dD_s^c\right]\\
        &+\E_x\left[
        \sum_{0\leq s<\tau_n\land\tau\land n}e^{-rs}
        \left(V_{\ell}(X_{s-})-V_{\ell}\left(X_{s-} - \Delta D_s-\Delta D_s^2\right)\right)\right].
    \end{align*}
    If there is a skew point at $\ell$, say with intensity $c'\leq \frac{1}{2}$  included in $D$ (note that we only need to consider this case since a unique strong solution only exists when the intensities sum up to a value that is at most one, cf. \cite{bass2005one,blei2013note} and the strategy of the non-deviating player, i.e., $D^1$, whose skew point intensity at $\ell$ is $\frac{1}{2}$) then it holds that:
    \begin{align*}
        &\1_{\{X_{s-}=X_{s}=\ell\}} \frac{V'_{\ell}(X_{s-}+)+V'_{\ell}(X_{s-}-)}{2}
        dD_{s}^c + 
        \1_{\{X_{s-}=X_{s}=\ell\}} \frac{3V'_{\ell}(\ell-)}{4}dL^{\ell}_{s}(X)\\
        &=\1_{\{X_{s-}=X_{s}=\ell\}}
        \left(\frac{c'}{4} + \frac{3}{8}\right)\frac{g(\ell) + g(\ell-)}{2}dL^{\ell}_{s}(X)\\
        &\geq \1_{\{X_{s-}=X_{s}=\ell\}}c'\frac{g(X_{s-})+g(X_{s-}-)}{2}dL^{\ell}_{s}(X) =
        \1_{\{X_{s-}=X_{s}=\ell\}}\frac{g(X_{s-})+g(X_{s-}-)}{2}dD_{s}^c.
    \end{align*}
    With the above observations we find (using \eqref{help-stuff} repeatedly) that:
    \begin{align*}
        V_{\ell}(x)
        &\geq\E_x\left[
        e^{-r(\tau_{n}\land\tau\land n)}\left(V_{\ell}(X_{(\tau_{n}\land\tau\land n)-})-
        V_{\ell}\left(\left(X_{(\tau_{n}\land\tau\land n)-} - \Delta D_{\tau_{n}\land\tau\land n} - \Delta D^2_{\tau_{n}\land\tau\land n}\right)\lor0
        \right)\right)\right]\\
        &+\E_x\left[\int_{0}^{\tau\land\tau_{n}\land n}e^{-rs}
        \frac{g(X_{s-})+g(X_{s-}-)}{2}\1_{\{X_{s-}=X_{s}=\ell\}}dD_s^c\right]\\
        &+\E_x\left[\int_{0}^{\tau\land\tau_{n}\land n}e^{-rs}
        \frac{g(X_{s-})+g(X_{s-}-)}{2}(1 - \1_{\{X_{s-}=X_{s}=\ell\}})dD_s^c\right]\\
        &+\E_x\left[
        \sum_{0\leq s<\tau_n\land\tau\land n}e^{-rs}
        \left(V_{\ell}(X_{s-})-V_{\ell}\left(\left(X_{s-} - \Delta D_s -
        \Delta D_s^2\right)\lor0\right)\right)\right]\\
        &\geq\E_x\left[e^{-r(\tau_{n}\land\tau\land n)}
        \left(V_{\ell}(X_{(\tau_{n}\land\tau\land n)-})-
        V_{\ell}\left(\left(X_{(\tau_{n}\land\tau\land n)-} - 
        \Delta D_{\tau_{n}\land\tau\land n} - 
        \Delta D^2_{\tau_{n}\land\tau\land n}\right)\lor0\right)
        \right)\right]\\
        &+
        \E_x\left[\int_{0}^{\tau\land\tau_{n}\land n}e^{-rs}
        \frac{g(X_{s-})+g(X_{s-}-)}{2}dD_s^c\right]\\
        &+\E_x\left[
        \sum_{0\leq s<\tau_n\land\tau\land n}e^{-rs}
        \frac{\Delta D_s}{\Delta D_s+\Delta D_s^2}
        \left(V_{\ell}(X_{s-})-V_{\ell}\left(\left(X_{s-} - \Delta D_s-
        \Delta D_s^2\right)\lor0\right)\right)\right]\\
        &\geq\E_x\left[\int_{0}^{\tau\land\tau_{n}\land n}e^{-rs}
        \frac{g(X_{s-}+)+g(X_{s-}-)}{2}dD_s^c\right]\\
        &+\E_x\left[
        \sum_{0\leq s\leq\tau_n\land\tau\land n}e^{-rs}
        \frac{\Delta D_s}{\Delta D_s+\Delta D_s^2}
        \left(V_{\ell}(X_{s-})-V_{\ell}\left(\left(X_{s-} - \Delta D_s - 
        \Delta D_s^2\right)\lor0\right)\right)\right]\\
        &\geq\E_x\left[\int_{0}^{\tau\land\tau_{n}\land n}e^{-rs}
        \frac{g(X_{s-}+)+g(X_{s-}-)}{2}dD_s^c\right]\\
        &+\E_x\left[
        \sum_{0\leq s\leq\tau_n\land\tau\land n}e^{-rs}
        \frac{\Delta D_s}{\Delta D_s+\Delta D_s^2}
        \left(G(X_{s-})-G\left(\left(X_{s-} - \Delta D_s - \Delta D_s^2\right)\lor
        0\right)\right)\right],
    \end{align*}
    from which we conclude that $V_{\ell}(x) \geq J_1\left(x; D, D^2\right)$ as in the proof of Theorem \ref{Theorem_verification_g_NE_2}.
    \item Here we consider $x\in(\ell,\infty)$ and show that  $J_{1}(x;D^1, D^2)=V_{\ell}(x)$.  Using the definitions in \eqref{g-function-payouts2} and \eqref{V_b:case-study} as well the fact that the equilibrium candidate strategies (i.e., $(D^1, D^2)$) imply an immediate jump from $x$ to $\ell$ it is easy to see that:
    \begin{equation*}
        J_{1}\left(x  ; D^1, D^2\right) = 
        J_{1}\left(\ell; D^1, D^2\right) + \frac{1}{2}\int_{\ell}^{x}g(u)du = 
        V_{\ell}(\ell) + 0 = V_{\ell}(x), \enskip x > \ell,
    \end{equation*}
    where the second equality is a direct result of (I).
    \item Here we consider $x\in(\ell,\infty)$ and show that $J_{1}(x;D, D^2)\leq V_{\ell}(x)$ for any $D \in \mathbb{L}$. Consider an arbitrary deviation strategy $D\in\mathbb{L}$. Note that the initial jumps given the pair $(D,D^2)$, i.e., $\Delta D_{0}$ and $\Delta D^{2}_{0}$ are deterministic  (cf. Section \ref{sec:Admissibility}). It follows from the definition in \eqref{g-function-payouts}--\eqref{g-function-payouts2} that:
    \begin{equation}\label{eq:proof_case_study_helper}
        J_1(x, D, D^2) = J_1\left(\left(x - \Delta D_{0} - \Delta D^{2}_{0}\right)\lor0, D, D^2\right) + 
        \frac{\Delta D_0}{\Delta D_0 + \Delta D^{2}_{0}}\int_{\left(x - \Delta D_{0} - \Delta D^{2}_{0}\right)\lor0}^{x}g(u)du.
    \end{equation}
    Observe that $\left(x - \Delta D_{0} - \Delta D^{2}_{0}\right)\lor0\leq \ell$; to see this, observe that the strategy of player 2, i.e., the proposed equilibrium strategy $D^2$, implies an immediate jump of at least the size $x-\ell$, for any deviation strategy of player $1$, cf. Section \ref{sec:Admissibility}. This yields (using also (II)) that:
    \begin{equation*}
        J_1\left(\left(x - \Delta D_{0} - \Delta D^{2}_{0}\right)\lor0, D, D^2\right)\leq V_{\ell}\left(\left(x - \Delta D_{0} - \Delta D^{2}_{0}\right)\lor0\right).
    \end{equation*}
    Using the above and the fact that $g(\ell-)\leq\frac{g(\ell)+g(\ell-)}{4}$ (see \eqref{eq:g-case-study}) we obtain:
    \begin{align*}
        J_{1}\left(x, D, D^2\right)
        &\leq V_{\ell}\left(\left(x - \Delta D_{0} - \Delta D^{2}_{0}\right)\lor0\right) + 
        \frac{\Delta D_{0}}{\Delta D_{0} + \Delta D^{2}_{0}}
        \int_{\left(x - \Delta D_{0} - \Delta D^{2}_{0}\right)\lor0}^{x}g(u)du\\            
        &=V_{\ell}\left(\left(x - \Delta D_{0} - \Delta D^{2}_{0}\right)\lor0\right) + 
        g(\ell-)\frac{\Delta D_{0}}{\Delta D_{0} + \Delta D^{2}_{0}}
        \left(\ell - \left(\left(x - \Delta D_{0} - \Delta D^{2}_{0}\right)
        \lor0\right)\right)\\
        &\leq V_{\ell}\left(\left(x - \Delta D_{0} - \Delta D^{2}_{0}\right)\lor0\right) + 
        \frac{g(\ell) + g(\ell-)}{4}\frac{\Delta D_{0}}{\Delta D_{0} + \Delta D^{2}_{0}}
        \left(\ell - \left(\left(x - \Delta D_{0} - \Delta D^{2}_{0}\right)
        \lor0\right)\right)\\
        &:= f\left(\Delta D_{0}\right).
     \end{align*}
     To prove the desired claim that $J_{1}(x;D,D^2)\leq V_{\ell}(x)$ in the present case it thus suffices to show that $f(\Delta D_{0})\leq V_{\ell}(x)$, for each $\Delta D_{0} \geq 0$. We shall do that in two cases.
     \begin{itemize}
        \item The first case is $\left(x-\Delta D^{2}_{0} - \Delta D_{0}\right)\lor0=0$. This implies that $\displaystyle f(\Delta D_0)=\ell \frac{g(\ell) + g(\ell-)}{4}\frac{\Delta D_{0}}{\Delta D_{0} + \Delta D^{2}_{0}}$. Hence, it suffices to note that: 
        \begin{equation*}
            \ell \frac{g(\ell) + g(\ell-)}{4}
            \frac{\Delta D_{0}}{\Delta D_{0} + \Delta D^{2}_{0}}\leq 
            \ell \frac{g(\ell) + g(\ell-)}{4}\leq
            \frac{\psi(\ell)}{\psi'(\ell)}\frac{g(\ell) + g(\ell-)}{4} = V_{\ell}(x),
        \end{equation*}
        where the first inequality holds by $\frac{\Delta D_{0}}{\Delta D_{0} + \Delta D^{2}_{0}}\leq1$, the second inequality holds by concavity of $\psi$ on $[0, \ell)$ (cf. Lemma \ref{Cor:concave-psi}), and the equality holds by definition \eqref{V_b:case-study}.
        \item The remaining case is $\left(x-\Delta D^{2}_{0} - \Delta D_{0}\right)\lor0>0$. Given the present case, this yields $x - \Delta D_{0} - \Delta D^{2}_{0}\in(0, \ell]$ and we obtain: 
        \begin{equation*}
            \Delta D_{0}\in\left[x - \ell - \Delta D^{2}_{0}, x - \Delta D^{2}_{0}\right).
        \end{equation*}
The inequality $f(\Delta D_{0})\leq V_{\ell}(x)$ is equivalent to the inequality in: 
        \begin{align*}
            f(\Delta D_{0})&=\frac{\psi\left(x - \Delta D_{0} - \Delta D^{2}_{0}\right)}{\psi'(\ell)}\frac{g(\ell) + g(\ell-)}{4} +
            \frac{g(\ell) + g(\ell-)}{4}\frac{\Delta D_{0}}{\Delta D_{0} + \Delta D^{2}_{0}}
            \left(\Delta D_{0} + \Delta D^{2}_{0} + \ell - x\right)\\
            &\leq\frac{g(\ell) + g(\ell-)}{4}
            \frac{\psi(\ell)}{\psi'(\ell)} = V_{\ell}(x),
        \end{align*}
        (cf. also \eqref{V_b:case-study} and the definition $f(\Delta D_{0})$). After some simple manipulations (multiplying both sides of the inequality by $\frac{1}{\frac{g(\ell) + g(\ell-)}{4}}\left(\Delta D_{0} + \Delta D^{2}_{0}\right)$ and rearranging) we see that the above inequality is equivalent to:
        \begin{equation*} 
            h(\Delta D_0) \leq 0,
        \end{equation*}
        where we define:
        \begin{equation*}
            h(y):=y\left(y+\Delta D^{2}_{0} + \ell - x\right)-
            \frac{y+\Delta D^{2}_{0}}{\psi'(\ell)}\left(\psi(\ell)-
            \psi\left(x - y- \Delta D^{2}_{0}\right)\right),\enskip 
            y \in \left[x - \ell - \Delta D^{2}_{0}, x - \Delta D^{2}_{0}\right).
        \end{equation*}
        Hence, it suffices to show that $h\leq0$ on this interval. To this end note that:
        \begin{align*}
            h\left(x - \ell - \Delta D^{2}_{0}\right)&=0\\
            h'(y)&=2y+\Delta D^{2}_{0}+\ell - x \\
            &-\frac{1}{\psi'(\ell)}\left(\psi(\ell)-
            \psi\left(x - y - \Delta D^{2}_{0}\right)+
            \left(y + \Delta D^{2}_{0}\right)
            \psi'\left(x - y - \Delta D^{2}_{0}\right)\right)\\
            h''(y)&=2 - \frac{2}{\psi'(\ell)}
            \psi'\left(x - y- \Delta D^{2}_{0}\right) + 
            \frac{y + \Delta D^{2}_{0}}{\psi'(\ell)}
            \psi''\left(x - y - \Delta D^{2}_{0}\right).
        \end{align*}
        This yields: 
        \begin{align*}
            h'\left(x - \ell - \Delta D^{2}_{0}\right) &= 
            x - \ell - \Delta D^{2}_{0} - (x-\ell) < 0,
        \end{align*}
        and using concavity of $\psi$ on $[0, \ell)$ we also obtain $h''\leq0$ in the relevant interval $\left[x - \ell - \Delta D^{2}_{0}, x - \Delta D^{2}_{0}\right)$ (cf.  $\psi'\left(x - y- \Delta D^{2}_{0}\right)\geq\psi'\left(\ell\right)$). These observations imply that $h \leq 0$ on the relevant interval, which concludes the proof.
    \end{itemize}
\end{enumerate}
\hfill \qedsymbol{} 
\subsection*{Acknowledgements}
The authors would like to extend gratitude to Andi Bodnariu (Stockholm University) and Erik Ekström (Uppsala University) for fruitful discussions regarding the topic of the present paper. 

\bibliographystyle{abbrv}
\bibliography{Bibl}
\end{document}